\def\subjclass#1{{\renewcommand{\thefootnote}{}%
\footnote{\emph{Mathematics Subject Classification (2020):} #1}}}
\date{\today}
\theoremstyle{plain}
\newtheorem{Thm}{Theorem}
\newtheorem{Rem}[Thm]{Remark}
\newtheorem{Lem}[Thm]{Lemma}
\newtheorem{Cor}[Thm]{Corollary}
\def\0{\mathbf 0}
\def\v{\vskip}
\numberwithin{equation}{section}
\numberwithin{Thm}{section}
\begin{document}
\large
%Topmatter

\title[Liouville type theorems]
{New Liouville type theorems for 3D steady incompressible MHD equations and Hall-MHD equations}

\author{Zhibing Zhang}

\address{Zhibing Zhang: School of Mathematics and Physics, Key Laboratory of Modeling, Simulation and Control of Complex Ecosystem in Dabie Mountains of Anhui Higher Education Institutes, Anqing Normal University, Anqing 246133, China}
\email{zhibingzhang29@126.com}%

\thanks{}

\keywords{Liouville type theorems; logarithmic improvement; MHD equations; Hall-MHD equations}

\subjclass{35B53, 76D03, 35A02}

\begin{abstract}
In this paper, we study Liouville type results for the three-dimensional stationary incompressible MHD equations and Hall-MHD equations.
Using the energy method and an iteration argument, we establish Liouville type theorems if Lebesgue norms of the velocity and magnetic field on the annulus satisfy certain growth conditions. Furthermore, by establishing new energy estimates and developing some novel differential inequality techniques, we relax the growth conditions by logarithmic factors and obtain logarithmic improvement version of Liouville type theorems. For the MHD equations, the assumptions imposed on the magnetic field are weaker and wider than that of the velocity field in certain sense. Our results extend and improve several recent works.
\end{abstract}
\maketitle
%end topmatter

\section{Introduction}
We consider the following two systems in $\mathbb{R}^{3}$: the stationary incompressible MHD equations
\begin{equation}\label{equ1.1}
  \left\{
    \begin{array}{ll}
     -\Delta u+(u\cdot\nabla) u+\nabla \pi=(B\cdot\nabla)B,  \\
   -\Delta B+(u\cdot\nabla) B-(B\cdot\nabla)u=0, \\
     \mathrm{div}u=\mathrm{div} B=0,
\end{array}
  \right.
\end{equation}
and the stationary incompressible Hall-MHD equations
\begin{equation}\label{equ1.2}
  \left\{
    \begin{array}{ll}
     -\Delta u+(u\cdot\nabla) u+\nabla \pi=(B\cdot\nabla)B,  \\
   -\Delta B+\mathrm{curl}(\mathrm{curl} B\times B)+(u\cdot\nabla) B-(B\cdot\nabla)u=0, \\
    \mathrm{div} u=\mathrm{div} B=0,
\end{array}
  \right.
\end{equation}
where the curl operator is defined by $\mathrm{curl}B=\nabla\times B$.
In \eqref{equ1.1} and \eqref{equ1.2}, $u$ represents the velocity vector field, $B$ denotes the magnetic vector field, and $\pi$ is the scalar pressure. Notice that when $B=0$, both \eqref{equ1.1} and \eqref{equ1.2} reduce to the stationary incompressible Navier-Stokes equations
\begin{equation}\label{equ1.3}
  \left\{
    \begin{array}{ll}
     -\Delta u+(u\cdot\nabla) u+\nabla \pi=0,  \\
    \mathrm{div}u=0.
\end{array}
  \right.
\end{equation}

The Liouville type problem for the Navier-Stokes equations is an active research topic in the community of
mathematical fluid mechanics and has been studied extensively, see \cite{BGWX25,CPZ20,CPZZ20,Chae14,CW16,CW19,CJL21,CY25,Galdi,GW78,KNSS09,KTW17,KTW22,KTW24,Seregin16,Seregin18,SW19,Tsai21} and the references therein. In the last decade, there have also been a lot of progress on the Liouville type problem for the three-dimensional stationary incompressible MHD equations and Hall-MHD equations. In 2014, Chae et al. \cite{CDL14} proved that the smooth solutions of \eqref{equ1.2} must be trivial provided that $u,B\in L^\infty(\mathbb{R}^3)\cap L^\frac{9}{2}(\mathbb{R}^3)$ and
$\nabla u,\nabla B\in L^2(\mathbb{R}^3)$. In 2016, Chae and Weng \cite{CWeng16} showed that the smooth solutions of \eqref{equ1.1} and \eqref{equ1.2} are zero if $u,B$ satisfy $\nabla u,\nabla B\in L^2(\mathbb{R}^3)$ and $u\in L^3(\mathbb{R}^3)$. In 2020, Yuan and Xiao \cite{YX20} obtained Liouville type theorems for \eqref{equ1.1} and \eqref{equ1.2} without assuming finite Dirichlet energies. They proved that the smooth solutions of \eqref{equ1.1} are trivial provided $u,B\in L^p(\mathbb{R}^3)$ while the smooth solutions of \eqref{equ1.2} are trivial provided $u\in L^p(\mathbb{R}^3)$, $B\in L^q(\mathbb{R}^3)$, where $p\in[2,\frac{9}{2}]$, $q\in[4,\frac{9}{2}]$. Fan and Wang \cite{FW21} extended Chae-Wolf's logarithmic type integrability condition (see \cite{CW16}) to the MHD equations, and claimed that if the smooth solutions of \eqref{equ1.1} satisfy
\begin{equation*}
\int_{\mathbb{R}^{3}}|(u,B)|^{\frac{9}{2}}\left\{\ln\left(2+\frac{1}{|(u,B)|}\right)\right\}^{-1}dx<+\infty,
\end{equation*}
then $u=B=0$.

On the other hand, Chae et al. \cite{CKW22} showed that the solutions of the MHD equations and Hall-MHD equations are trivial under suitable growth conditions at infinity for the mean oscillations for the potential functions. Assume that there exist smooth matrix-valued functions $\Phi$ and $\Psi$ such that $u=\mathrm{div}\Phi$ and $B=\mathrm{div}\Psi$, and $\Phi,\Psi$
satisfy
\begin{equation*}
\left(\frac{1}{|B_R|}\int_{B_R}|\Phi-\Phi_{B_R}|^sdx\right)^\frac{1}{s}+\left(\frac{1}{|B_R|}\int_{B_R}|\Psi-\Psi_{B_R}|^sdx\right)^\frac{1}{s}\leq CR^{\frac{1}{3}-\frac{1}{s}}, \forall R>1,
\end{equation*}
for some $3<s\leq6$, where $\Phi_{B_R}$ and $\Psi_{B_R}$ are the mean value of $\Phi$ and $\Psi$ on the ball $B_R$, respectively. Then they claimed that the smooth solutions of \eqref{equ1.1} must be trivial. In addition, if the following condition holds
\begin{equation*}
\left(\frac{1}{|B_R|}\int_{B_R}|B-B_{B_R}|^pdx\right)^\frac{1}{p}=o\left(R^{\left(\frac{2s}{3}+1\right)\left(\frac{1}{3}-\frac{1}{p}\right)}\right)\text{ as } R\rightarrow+\infty,
\end{equation*}
for some $p>3$, where $B_{B_R}$ represents the mean value of $B$ on the ball $B_R$, then the smooth solutions of \eqref{equ1.2} must be trivial.
As a direct consequence, Liouville type theorem for \eqref{equ1.1} can be obtained under the assumption $u,B\in BMO^{-1}(\mathbb{R}^3)$, which was firstly stated in \cite[Corollary 1.2]{CW21}.
Chen et al. \cite{CLW22} established Liouville type theorems for \eqref{equ1.1} and \eqref{equ1.2} provided $u\in BMO^{-1}(\mathbb{R}^3)$ and $B\in L^{6,\infty}(\mathbb{R}^3)$.
Moreover, they also established Liouville type theorems for \eqref{equ1.1} under the condition $u\in L^3(\mathbb{R}^3)$ and $B\in L^6(\mathbb{R}^3)$.
Very recently, Chae and Lee \cite{CL24} extended the results of Chen et al. \cite{CLW22}. They proved Liouville type theorems for \eqref{equ1.1} under one of the following assumptions
\begin{subequations}
\begin{align}
&(\mathrm{i})\;u\in M^{-1}_{s,r}(\mathbb{R}^3),\;B\in L^q(\mathbb{R}^3),\;3<s\leq6,\; 0\leq r\leq\frac{s}{3}-1,\;3\leq q\leq\frac{6s}{s+2r},\\
&(\mathrm{ii})\;u\in L^p(\mathbb{R}^3),\;B\in L^q(\mathbb{R}^3),\;\frac{1}{p}+\frac{2}{q}\geq\frac{2}{3},\;3\leq p\leq\frac{9}{2},\;3\leq q\leq6.\label{cond1.6b}
\end{align}
\end{subequations}
For the definition of the space $M^{-1}_{s,r}(\mathbb{R}^3)$, one can refer to \cite[pp.2]{CL24}. Besides, Cho et al. \cite{CNY24} showed Liouville type theorems for \eqref{equ1.1} provided  one of the following assumptions holds
$$
\aligned
&(\mathrm{i})\liminf_{R\rightarrow+\infty}\left(\frac{\|u\|_{L^p(B_{2R}\backslash B_R)}}{R^{\frac{2}{p}-\frac{1}{3}}}+\frac{\|B\|_{L^q(B_{2R}\backslash B_R)}}{R^{\frac{2}{q}-\frac{1}{3}}}\right)<+\infty,\;\frac{3}{2}<p<3,\; 1\leq q\leq 3,\\
&(\mathrm{ii})\liminf_{R\rightarrow+\infty}\frac{\|u\|_{L^3(B_{2R}\backslash B_R)}}{R^\frac{1}{3}}=0,\;\limsup_{R\rightarrow+\infty}\frac{\|B\|_{L^q(B_{2R}\backslash B_R)}}{R^{\frac{2}{q}-\frac{1}{3}}}<+\infty,\;1\leq q\leq 3.
\endaligned
$$
Cho et al. \cite{CNY24} also showed Liouville type theorems for \eqref{equ1.1} and \eqref{equ1.2} under one of the following conditions
\begin{subequations}
\begin{align}
&(\mathrm{i})\liminf_{R\rightarrow+\infty}\frac{\|u\|_{L^p(B_{2R}\backslash B_R)}}{R^{\frac{2}{p}-\frac{1}{3}}}=0,\; \limsup_{R\rightarrow+\infty}\frac{\|B\|_{L^q(B_{2R}\backslash B_R)}}{R^{\frac{2}{q}-\frac{1}{3}}}<+\infty,\;\frac{3}{2}<p<3,\;q=2p',\label{cond1.7a}\\
&(\mathrm{ii})\liminf_{R\rightarrow+\infty}\frac{\|u\|_{L^p(B_{2R}\backslash B_R)}}{R^{\frac{3}{p}-1}}=0,\;\limsup_{R\rightarrow+\infty}\|B\|_{L^6(B_{2R}\backslash B_R)}<+\infty,\;\frac{3}{2}<p\leq3,\label{cond1.7b}
\end{align}
\end{subequations}
where $p'$ denotes the conjugate exponent to $p$ and is given by $p'=\frac{p}{p-1}$.

 In this paper, we are mainly concerned with Liouville type theorems under integrability conditions. From the perspective of the regularity theory for the MHD equations, we see that the velocity field plays a more important role than the magnetic field. For example, He-Xin \cite{HX05} and Wang-Zhang \cite{WZ13} showed some regularity criterions for the MHD equations only in terms of the velocity field.
 Under the guidance of this idea and inspired by the works of \cite{CNY24,CY25}, we succeed in establishing Liouville type theorems for \eqref{equ1.1} under weaker and wider conditions imposed on the magnetic field than the velocity field in certain sense.
Using the energy method and an iteration argument, we obtain Liouville type theorems if Lebesgue norms of the velocity and magnetic field on the annulus satisfy certain growth conditions. Furthermore, combining the energy method, the iteration argument and some differential inequality techniques, we relax the growth conditions by logarithmic factors and obtain logarithmic improvement version of Liouville type theorems.
Our results extend and improve the recent works of Chae-Lee \cite{CDL14}, Cho-Neustupa-Yang \cite{CNY24} and Cho-Yang \cite{CY25}.

For simplicity, we denote
$$X_{p,\alpha}(R)=R^{-\alpha}\|u\|_{L^p\left(A_R\right)},\;Y_{q,\beta}(R)=R^{-\beta}\|B\|_{L^q\left(A_R\right)},$$
$$X_{p,\alpha,\lambda}(R)=R^{-\alpha}(\ln R)^{-\lambda}\|u\|_{L^p\left(A_R\right)},\;Y_{q,\beta,\mu}(R)=R^{-\beta}(\ln R)^{-\mu}\|B\|_{L^q\left(A_R\right)},$$
where $A_R$ represents the annulus $B_{2R}\backslash \overline{B_{\frac{3R}{2}}}$.

Our main results are stated as follows.

\begin{Thm}\label{main1}
Let $(u,\pi,B)$ be a smooth solution of \eqref{equ1.1}. Suppose that one of the following assumptions holds
\begin{align*}
\mathrm{(A1)}\;&\liminf\limits_{R\rightarrow+\infty}\left[X_{p,\alpha}(R)+Y_{q,\beta}(R)\right]<+\infty,\text{ where  $p,q,\alpha,\beta$ satisfy}\\
&p\in\left(\frac{3}{2},3\right),\;q\in[1,2p'),\;\alpha\in\left[0,\frac{2}{p}-\frac{1}{3}\right],\;\beta\in\left[0,\frac{3}{q}-\frac{1}{2}\right],\;\alpha+\frac{(4p-6)q}{(6-q)p}\beta\leq1;\\
\mathrm{(A2)}\;&\liminf\limits_{R\rightarrow+\infty}\left[X_{p,\alpha}(R)+Y_{q,\beta}(R)\right]<+\infty,\text{ where  $p,q,\alpha,\beta$ satisfy}\\
&p\in\left(\frac{3}{2},3\right),\;q\in[2p',6],\;\alpha\in\left[0,\frac{2}{p}-\frac{1}{3}\right],\;\beta\in\left[0,\frac{3}{q}-\frac{1}{2}\right],\;\alpha+2\beta<\frac{3}{p}+\frac{6}{q}-2;\\
\mathrm{(A3)}\;&\liminf\limits_{R\rightarrow+\infty}X_{p,\alpha}(R)=0,\;\limsup\limits_{R\rightarrow+\infty}Y_{q,\beta}(R)<+\infty,\text{ where  $p,q,\alpha,\beta$ satisfy}\\
&p\in\left[3,\frac{9}{2}\right],\;q\in[1,2p'),\;\alpha\in\left[0,\frac{3}{p}-\frac{2}{3}\right],\;\beta\in\left[0,\frac{3}{q}-\frac{1}{2}\right],\;\alpha+\frac{(4p-6)q}{(6-q)p}\beta\leq1;\\
\mathrm{(A4)}\;&\liminf\limits_{R\rightarrow+\infty}X_{p,\alpha}(R)=0,\;\limsup\limits_{R\rightarrow+\infty}Y_{q,\beta}(R)<+\infty,\text{ where  $p,q,\alpha,\beta$ satisfy}\\
&p\in\left[3,\frac{9}{2}\right],\;q\in[2p',6],\;\alpha\in\left[0,\frac{3}{p}-\frac{2}{3}\right],\;\beta\in\left[0,\frac{3}{q}-\frac{1}{2}\right],\;\alpha+2\beta\leq\frac{3}{p}+\frac{6}{q}-2.
\end{align*}
Then $u =B= 0$.
\end{Thm}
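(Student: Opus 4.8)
The plan is to run a single energy estimate for $u$ and $B$ together against a cutoff, exploit the algebraic cancellation special to the MHD coupling, and then convert the resulting Caccioppoli inequality into a recursive inequality for the local Dirichlet energy that the growth constraints force to vanish. First I would fix a radial cutoff $\phi$ with $\phi\equiv1$ on $B_{3R/2}$, $\supp\phi\subset B_{2R}$ and $|\nabla\phi|\lesssim R^{-1}$ supported in $A_R$, test the first equation of \eqref{equ1.1} with $u\phi^2$ and the second with $B\phi^2$, and add. Using $\mathrm{div}\,u=\mathrm{div}\,B=0$, every convective and pressure term integrates by parts onto $\nabla\phi$; crucially, the two coupling terms recombine through $(B\cdot\nabla)B\cdot u+(B\cdot\nabla)u\cdot B=(B\cdot\nabla)(u\cdot B)$ and likewise integrate by parts onto $\nabla\phi$, so that all right-hand terms are supported on $A_R$. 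Absorbing the cross terms $\phi\,\nabla u:(u\otimes\nabla\phi)$ and $\phi\,\nabla B:(B\otimes\nabla\phi)$ by Young's inequality and replacing $\pi$ by $\pi-\pi_{A_R}$ (legitimate since $\int u\cdot\nabla\phi\,\phi=0$), I obtain
\[
\int_{B_{3R/2}}(|\nabla u|^2+|\nabla B|^2)
\le\frac{C}{R^2}\int_{A_R}(|u|^2+|B|^2)
+\frac{C}{R}\int_{A_R}(|u|^3+|u||B|^2)
+\frac{C}{R}\int_{A_R}|\pi-\pi_{A_R}|\,|u|.
\]

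Next I would estimate the cubic and pressure terms. The pressure, governed by $\Delta\pi=\partial_i\partial_j(B_iB_j-u_iu_j)$, is controlled on the annulus by a localized Calder\'on--Zygmund estimate of the form $\|\pi-\pi_{A_R}\|_{L^{s}(A_R)}\lesssim\|u\|_{L^{2s}(\widetilde A_R)}^2+\|B\|_{L^{2s}(\widetilde A_R)}^2$ on a slightly enlarged annulus $\widetilde A_R$. The velocity cubic term and the pressure term are then handled by H\"older together with Gagliardo--Nirenberg interpolation between $L^p(A_R)$, $L^q(A_R)$ and $L^6$, the latter bounded via $\|u\phi\|_{L^6}+\|B\phi\|_{L^6}\lesssim\|\nabla(u\phi)\|_{L^2}+\|\nabla(B\phi)\|_{L^2}$ so that a \emph{sub-quadratic} power of the Dirichlet energy appears and can be reabsorbed. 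The coupling term $\int_{A_R}|u||B|^2$ is decisive: when $q\ge2p'$, H\"older closes directly into $\|u\|_{L^p}\|B\|_{L^q}^2$ up to a volume factor $R^{3-\frac3p-\frac6q}$, yielding the joint constraint $\alpha+2\beta\le\frac3p+\frac6q-2$ of (A2)/(A4); when $q<2p'$ one must instead interpolate $B$ into $L^6$, i.e.\ into the Dirichlet energy, with exponent $\sigma$ determined by $\frac{1}{2p'}=\frac{1-\sigma}{q}+\frac{\sigma}{6}$, and a Young step against $\|\nabla B\|_{L^2}^{2\sigma}$ produces precisely the coefficient $2(1-\sigma)=\frac{(4p-6)q}{(6-q)p}$ on $\beta$ in (A1)/(A3). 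Bookkeeping the powers of $R$, the individual restrictions $\alpha\le\frac2p-\frac13$ (resp.\ $\frac3p-\frac23$ for $p\in[3,\frac92]$) and these joint conditions are exactly what bound the right-hand side by $X_{p,\alpha}(R),Y_{q,\beta}(R)$ times a nonpositive power of $R$ and a reabsorbable fraction of the Dirichlet energy.

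Writing $D(R)=\int_{B_R}(|\nabla u|^2+|\nabla B|^2)$, the two steps combine into a recursive inequality $D(\tfrac{3R}{2})\le C\,\Theta(R)\,(1+D(2R))^{\theta}$ with $\theta<1$, where $\Theta(R)$ is a product of powers of $X_{p,\alpha}(R),Y_{q,\beta}(R)$ and a nonpositive power of $R$. In (A1)/(A2), selecting $R_j\to+\infty$ along which $X_{p,\alpha}+Y_{q,\beta}$ stays bounded keeps $\Theta(R_j)$ bounded; a Young/absorption step followed by a dyadic bootstrap then forces $D$ to be finite and, on passing to the $\liminf$, to vanish. In (A3)/(A4), the stronger hypothesis $\liminf X_{p,\alpha}=0$ compensates for the critical range $p\in[3,\frac92]$, so that $\Theta(R_j)\to0$ makes $D\equiv0$ at once. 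Either way $\nabla u=\nabla B\equiv0$, hence $u,B$ are constants, and since the prescribed growth of $\|u\|_{L^p(A_R)},\|B\|_{L^q(A_R)}$ is too slow to accommodate a nonzero constant, we conclude $u=B=0$.

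I expect the main obstacle to be this final closing step, rather than the energy identity (whose cancellation is purely algebraic) or the pressure bound (standard localized Calder\'on--Zygmund). The delicate part is arranging the interpolation so that the power of the Dirichlet energy on the right is strictly below $1$ while the accompanying power of $R$ is simultaneously nonpositive, exactly under the stated exponent ranges; the borderline character of the hypotheses---the distinction between ``$\le$'' and ``$<$'' in the joint conditions, and between ``$\liminf=0$'' in (A3)/(A4) and ``$\liminf<+\infty$'' in (A1)/(A2)---reflects whether these two powers are strict, and landing the computation on the sharp exponents $\frac{(4p-6)q}{(6-q)p}$ and $\frac3p+\frac6q-2$ is where the substantive effort goes.
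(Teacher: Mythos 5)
Your energy identity, cancellation $(B\cdot\nabla)B\cdot u+(B\cdot\nabla)u\cdot B=(B\cdot\nabla)(u\cdot B)$, case split at $q=2p'$, and interpolation bookkeeping (the exponent $2(1-\sigma)=\frac{(4p-6)q}{(6-q)p}$ and the constraint $\alpha+2\beta\le\frac3p+\frac6q-2$) all match the paper's Section~2 lemmas, but the closing mechanism has two genuine gaps. First, your Caccioppoli inequality couples \emph{different scales}: the left side lives on $B_{3R/2}$ while the absorbable Dirichlet terms live on $A_R=B_{2R}\setminus B_{3R/2}$, so you arrive at $D(\tfrac{3R}{2})\le C\,\Theta(R)\,(1+D(2R))^{\theta}$. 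Under (A1)/(A2) the hypotheses control $\Theta$ only along a sequence $R_j\to\infty$, and nothing relates $D(2R_j)$ to the left sides at other scales; a dyadic bootstrap of such an inequality requires either validity at \emph{all} $R$ or an a priori polynomial growth bound on $D$, neither of which is available. The paper avoids this entirely by taking nested radii $s<t$ inside $[\sqrt3R,2R]$ and applying the Giaquinta-type iteration (Lemma~\ref{Lem2.3}) to remove the absorbable $\tfrac12 f(t)$, so that the resulting bound \eqref{ine3.11} at scale $R$ has a right side built \emph{only} from annulus data $\|u\|_{L^p(A_R)},\|B\|_{L^q(A_R)}$ with no energy at a larger scale; it can then be evaluated along the good sequence $R_j$ alone.

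Second, even granting boundedness, your scheme cannot produce vanishing: in the borderline cases the theorem permits (e.g.\ $\beta=\frac3q-\frac12$, or equality in $\alpha+\frac{(4p-6)q}{(6-q)p}\beta\le1$), $\Theta(R_j)$ stays bounded but does \emph{not} tend to zero --- in (A3)/(A4) the hypothesis $X_{p,\alpha}(R_j)\to0$ multiplies only some terms, and the pure magnetic term $CR^{1-\frac6q+2\beta}[Y_{q,\beta}(R)]^2$ survives with exponent exactly $0$. Passing to the limit in your inequality gives only $D_\infty\le CM(1+D_\infty)^\theta$, i.e.\ finiteness; and a finite Dirichlet integral does not make $u,B$ constant, so your final ``rule out nonzero constants by growth'' step has no input. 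The paper's missing idea is a \emph{two-pass} structure with the enlarged functional $f(\rho)=\|u\|_{L^6(B_\rho)}^2+\|B\|_{L^6(B_\rho)}^2+\|\nabla u\|_{L^2(B_\rho)}^2+\|\nabla B\|_{L^2(B_\rho)}^2$ (the $L^6$ norms enter the left side via Gagliardo--Nirenberg): the first pass yields $u,B\in L^6(\mathbb{R}^3)$ and $\nabla u,\nabla B\in L^2(\mathbb{R}^3)$, hence the tail limits \eqref{lim3.12}; the second pass reruns the estimate with the tail-form bounds \eqref{ine2.2}, \eqref{ine2.6}, \eqref{ine2.10}/\eqref{ine2.11}, in which every term carries a factor $\|u\|_{L^6(A_R)}$, $\|B\|_{L^6(A_R)}$ or $\|\nabla u\|_{L^2(A_R)}$ vanishing along $R_j$, so $f(R_j)\to0$ and $u=B=0$ follows directly. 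A lesser but real flaw: your localized Calder\'on--Zygmund claim $\|\pi-\pi_{A_R}\|_{L^s(A_R)}\lesssim\|u\|^2_{L^{2s}(\widetilde A_R)}+\|B\|^2_{L^{2s}(\widetilde A_R)}$ is not literally true, since the harmonic remainder of the local pressure decomposition is not controlled by annulus norms of $u,B$; either add the Bogovskii corrector $w$ to the test function as the paper does (then $\mathrm{div}(u\eta^2-w)=0$ and the pressure term vanishes identically), or use a Ne\v{c}as negative-norm estimate, which inserts an absorbable $\|\nabla u\|_{L^2(A_R)}$. That repair is routine; the single-pass closing argument is the substantive gap.
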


\begin{Thm}\label{main2}
Let $(u,\pi,B)$ be a smooth solution of \eqref{equ1.2}. Suppose that one of the following assumptions holds
\begin{align*}
\mathrm{(B1)}\;&\liminf\limits_{R\rightarrow+\infty}\left[X_{p,\alpha}(R)+Y_{q,\beta}(R)\right]<+\infty,\text{ where  $p,q,\alpha,\beta$ satisfy}\\
&p\in\left(\frac{3}{2},3\right),\;q\in(3,2p'),\;\alpha\in\left[0,\frac{2}{p}-\frac{1}{3}\right],\;\beta\in\left[0,\frac{3}{q}-\frac{1}{2}\right],\;\alpha+\frac{(4p-6)q}{(6-q)p}\beta\leq1;\\
\mathrm{(B2)}\;&\liminf\limits_{R\rightarrow+\infty}\left[X_{p,\alpha}(R)+Y_{q,\beta}(R)\right]<+\infty,\text{ where  $p,q,\alpha,\beta$ satisfy}\\
&p\in\left(\frac{3}{2},3\right),\;q\in[2p',6],\;\alpha\in\left[0,\frac{2}{p}-\frac{1}{3}\right],\;\beta\in\left[0,\frac{3}{q}-\frac{1}{2}\right],\;\alpha+2\beta<\frac{3}{p}+\frac{6}{q}-2;\\
\mathrm{(B3)}\;&\liminf\limits_{R\rightarrow+\infty}X_{p,\alpha}(R)=0,\;\limsup\limits_{R\rightarrow+\infty}Y_{q,\beta}(R)<+\infty,\text{ where  $p,q,\alpha,\beta$ satisfy}\\
&p\in\left[3,\frac{9}{2}\right],\;q\in(3,6],\;\alpha\in\left[0,\frac{3}{p}-\frac{2}{3}\right],\;\beta\in\left[0,\frac{3}{q}-\frac{1}{2}\right],\;\alpha+2\beta\leq\frac{3}{p}+\frac{6}{q}-2.
\end{align*}
Then $u =B= 0$.
\end{Thm}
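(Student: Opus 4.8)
The plan is to prove Theorem \ref{main2} by the same energy-plus-iteration scheme that underlies Theorem \ref{main1}, the only genuinely new ingredient being the control of the Hall term $\mathrm{curl}(\mathrm{curl}B\times B)$. First I would fix a radial cut-off $\phi=\phi_R\in C_c^\infty(\mathbb{R}^3)$ with $\phi\equiv1$ on $B_{3R/2}$, $\phi\equiv0$ outside $B_{2R}$, and $|\nabla\phi|\lesssim R^{-1}$, $|\nabla^2\phi|\lesssim R^{-2}$, so that every derivative of $\phi$ is supported in the annulus $A_R=B_{2R}\setminus\overline{B_{3R/2}}$. Testing the momentum equation in \eqref{equ1.2} against $u\phi^2$ and the induction equation against $B\phi^2$ and integrating by parts, the divergence-free constraints turn the convective terms $(u\cdot\nabla)u\cdot u$ and $(u\cdot\nabla)B\cdot B$ into pure cut-off remainders, while the Lorentz/stretching pair cancels exactly as in the MHD case: an integration by parts gives $\int(B\cdot\nabla)B\cdot u\,\phi^2+\int(B\cdot\nabla)u\cdot B\,\phi^2=-\int B_iB_ju_i\,\partial_j\phi^2$, again only a remainder. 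The crucial observation is that after moving the outer curl onto the test field via $\int\mathrm{curl}F\cdot G=\int F\cdot\mathrm{curl}G$, the Hall contribution becomes
\[
\int\mathrm{curl}(\mathrm{curl}B\times B)\cdot B\phi^2
=\int(\mathrm{curl}B\times B)\cdot\bigl(\phi^2\,\mathrm{curl}B+\nabla(\phi^2)\times B\bigr)
=\int(\mathrm{curl}B\times B)\cdot(\nabla\phi^2\times B),
\]
since $(\mathrm{curl}B\times B)\perp\mathrm{curl}B$ pointwise; thus the Hall term produces no bulk magnetic energy and only a boundary-type remainder controlled by $R^{-1}\int_{A_R}|\nabla B|\,|B|^2$.

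Adding the two identities and using Young's inequality to absorb the diffusion remainders $\int\nabla u:(u\otimes\nabla\phi^2)$ and $\int\nabla B:(B\otimes\nabla\phi^2)$ into the left-hand side yields a Caccioppoli inequality of the form
\[
\int_{B_{3R/2}}\!\!\bigl(|\nabla u|^2+|\nabla B|^2\bigr)
\le C\!\int_{A_R}\Bigl(R^{-2}|u|^2+R^{-2}|B|^2+R^{-1}|u|^3+R^{-1}|u||B|^2+R^{-1}|\pi||u|+R^{-1}|\nabla B||B|^2\Bigr).
\]
The pressure would be handled through a local representation on the annulus (a Bogovskii-type decomposition, or $\pi=\sum_{i,j}R_iR_j(u_iu_j-B_iB_j)$ cut off and estimated by Calder\'on--Zygmund bounds), which recasts $R^{-1}\int_{A_R}|\pi||u|$ in terms of the same scale-invariant quantities as the cubic velocity term. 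I would then estimate every remaining integral by H\"older's inequality combined with the Gagliardo--Nirenberg--Sobolev inequality on $A_R$, so that each cubic expression becomes a product of a magnetic or kinetic Dirichlet-energy factor $\bigl(\int_{A_R}|\nabla u|^2\bigr)^{\theta}$ or $\bigl(\int_{A_R}|\nabla B|^2\bigr)^{\theta}$ with powers of $X_{p,\alpha}(R)$ and $Y_{q,\beta}(R)$ and an explicit power of $R$.

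The exponent bookkeeping is where the hypotheses enter. The split at $q=2p'$ is dictated by the magnetic cross term, since $\int_{A_R}|u||B|^2\le\|u\|_{L^p(A_R)}\|B\|_{L^{2p'}(A_R)}^2$: for $q\in[2p',6]$ one uses H\"older directly on $A_R$, producing the budget $\alpha+2\beta\le\frac3p+\frac6q-2$ of (B2)--(B3), whereas for $q\in(3,2p')$ one interpolates $\|B\|_{L^{2p'}}$ between $L^q$ and the Sobolev exponent $L^6$, and the resulting weight of $\beta$ is exactly $\frac{(4p-6)q}{(6-q)p}$, which indeed tends to $2$ as $q\uparrow2p'$, matching (B1) with (B2) at the junction. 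The requirement $q>3$ throughout, absent in Theorem \ref{main1}, is forced by the Hall remainder $R^{-1}\int_{A_R}|\nabla B||B|^2$: bounding it by $R^{-1}\bigl(\int_{A_R}|\nabla B|^2\bigr)^{1/2}\|B\|_{L^4(A_R)}^2$ and interpolating $\|B\|_{L^4}$ between $\|B\|_{L^q}$ and the magnetic Sobolev norm only balances at the critical growth $\beta=\frac3q-\frac12$ when $q>3$. Once the admissible ranges of $p,q,\alpha,\beta$ guarantee that every $R$-power is nonpositive and every Dirichlet exponent $\theta$ stays strictly below $1$, Young's inequality absorbs the energy factors, and passing to the radii $R_k\to\infty$ supplied by the $\liminf$ hypothesis (along which $X_{p,\alpha}+Y_{q,\beta}$ stays bounded, or $X_{p,\alpha}\to0$ while $Y_{q,\beta}$ stays bounded in (B3)) forces $\int_{\mathbb{R}^3}(|\nabla u|^2+|\nabla B|^2)<\infty$; a second iteration exploiting that the annulus energy now tends to zero drives the right-hand side to zero, so $\nabla u\equiv\nabla B\equiv0$, and the sublinear growth $\alpha<\frac2p,\ \beta<\frac3q$ forces the constants to vanish, giving $u=B=0$.

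The main obstacle I anticipate is closing the exponent inequalities simultaneously in the presence of the Hall coupling. Because $|\nabla B|\,|B|^2$ is quadratic in $B$ and linear in $\nabla B$, the interpolation must be arranged so that the total power of $\|\nabla B\|_{L^2(A_R)}$ stays below the threshold that Young's inequality can absorb while the accompanying $R$-power remains nonpositive under a single constraint on $(\alpha,\beta)$; verifying that the endpoints $q=2p'$ and the equality cases of the structural inequalities still yield $\theta<1$, rather than the borderline $\theta=1$ that would defeat the absorption, is the delicate computation that the admissible-range conditions in (B1)--(B3) are designed to make work.
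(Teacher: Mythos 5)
Your overall strategy is the paper's: the paper proves Theorem \ref{main2} by running the scheme of Theorem \ref{main1} on the unified system \eqref{equ3.1} with $\kappa=1$, and your treatment of the Hall term is identical in substance --- moving the outer curl onto the test field, killing the bulk contribution via $(\mathrm{curl}\,B\times B)\perp\mathrm{curl}\,B$, and your diagnosis that the surviving remainder is what forces $q>3$ (the $L^4$ interpolation between $L^q$ and $L^6$ is Young-absorbable exactly when $q>3$) is precisely the content of Lemma \ref{Lem2.7} and of the cutoff function $h(q)$ there. Your exponent bookkeeping (the split at $q=2p'$, the weight $\frac{(4p-6)q}{(6-q)p}$ on $\beta$, the budget $\alpha+2\beta\le\frac{3}{p}+\frac{6}{q}-2$, and the two-pass structure: first deduce $u,B\in L^6$, $\nabla u,\nabla B\in L^2$, then rerun the estimate with vanishing annulus norms) also matches the paper.

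Two steps, however, would fail as literally written. First, the alternative pressure route $\pi=\sum_{i,j}R_iR_j(u_iu_j-B_iB_j)$ is not available here: under the hypotheses of the theorem $u,B$ lie in no global Lebesgue space, so the Calder\'on--Zygmund representation is not justified, and $\pi$ is determined by the equations only up to a harmonic function that may grow. The workable branch --- and the paper's actual device --- is not to estimate $R^{-1}\int_{A_R}|\pi||u|$ at all, but to make the pressure disappear: one tests the momentum equation with the divergence-free field $u\eta^2-w$, where $w$ is the Bogovskii solution of $\mathrm{div}\,w=u\cdot\nabla\eta^2$ on the annulus (Lemmas \ref{Lem2.1}--\ref{Lem2.2}); the price is the extra terms $\int\nabla u:\nabla w$, $\int (u\cdot\nabla)w\cdot u$, $\int (B\cdot\nabla)w\cdot B$, which generate exactly the quantities $J_2,J_3$ that the paper then estimates. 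Second, your absorption mechanism does not close with a single fixed cutoff: the Dirichlet factors $\|\nabla u\|_{L^2(A_R)}^{\theta}$, $\|\nabla B\|_{L^2(A_R)}^{\theta}$ live on $A_R$, outside $B_{3R/2}$, so no matter how small $\theta<1$ is, Young's inequality cannot dump them into your left-hand side $\int_{B_{3R/2}}(|\nabla u|^2+|\nabla B|^2)$. For the Hall remainder the correct move is to keep one factor of the weight (since $\nabla\eta^2=2\eta\nabla\eta$, the integrand is bounded by $|\nabla B|\eta\cdot|B|^2|\nabla\eta|$), absorb $\frac12\int|\nabla B|^2\eta^2$ into the weighted left side first, and only then estimate the leftover $(t-s)^{-2}\int|B|^4$, i.e.\ the paper's $J_4$; for the remaining cubic terms one must work with two radii $\sqrt{3}R\le s<t\le 2R$ and close via the Giaquinta-type iteration Lemma \ref{Lem2.3} applied to $f(\rho)=\|u\|_{L^6(B_\rho)}^2+\|B\|_{L^6(B_\rho)}^2+\|\nabla u\|_{L^2(B_\rho)}^2+\|\nabla B\|_{L^2(B_\rho)}^2$, as in \eqref{ine3.9} --- this hole-filling over nested radii is what ``the scheme of Theorem \ref{main1}'' concretely consists of in this paper, and your sketch invokes it only by name. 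With these two standard repairs your outline coincides with the paper's proof.
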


We notice that the inequality $\alpha+2\beta<\frac{3}{p}+\frac{6}{q}-2$ in (A2) and (B2) is a strict inequality, which means the conclusion will fail when $(\alpha,\beta)$ is on the line $\alpha+2\beta=\frac{3}{p}+\frac{6}{q}-2$. However, we find that the result still holds for some special endpoint case.
\begin{Rem}\label{Rem1.3}
Both $\mathrm{(A2)}$ and $\mathrm{(B2)}$ can be replaced by the following assumption
\begin{align*}
\mathrm{(A5)}\;&\liminf\limits_{R\rightarrow+\infty}\left[X_{p,\alpha}(R)+Y_{q,\beta}(R)\right]<+\infty,\text{ where  $p,q,\alpha,\beta$ satisfy}\\
&p\in\left(\frac{3}{2},3\right),\;q\in[2p',6],\;\alpha=\frac{3}{p}-1,\;\beta=\frac{3}{q}-\frac{1}{2}.
\end{align*}
\end{Rem}

Roughly speaking, we can relax the growth conditions in Theorem \ref{main1} and Theorem \ref{main2} by logarithmic factors in certain situations. For the convenience of presenting our Liouville type results of logarithmic improvement version, we make two basic assumptions on the parameters $\lambda$ and $\mu$. The first one is
\begin{align}\label{ass1.8}
\lambda\in\left[0,\frac{3}{p}-1\right] \text{ for }p\in\left(\frac{3}{2},3\right),\;\mu\geq0.
\end{align}
The second one is
\begin{align}\label{ass1.9}
\text{When }&p\in\left(\frac{3}{2},3\right),\;q\in[1,2p'),\;\beta\in\left[0,\frac{3}{q}-\frac{1}{2}\right),\text{ we assume}\notag\\
&\lambda+\frac{(4p-6)q}{(6-q)p}\mu\leq\frac{6p-3q(p-1)}{(6-q)p};\notag\\
\text{when }&p\in\left(\frac{3}{2},3\right),\;q\in[1,2p'),\;\beta=\frac{3}{q}-\frac{1}{2},\text{ we assume}\notag\\
&\lambda+\frac{6p+pq-3q}{(6-q)p}\mu\leq\frac{6p-3q(p-1)}{2(6-q)p};\\
\text{when }&p\in\left[3,\frac{9}{2}\right],\;q\in[1,2p'),\;\beta\in\left[0,\frac{3}{q}-\frac{1}{2}\right),\text{ we assume }\notag\\ &\mu\in\left[0,\frac{6p-3q(p-1)}{(4p-6)q}\right];\notag\\
\text{when }&p\in\left[3,\frac{9}{2}\right],\;q\in[1,2p'),\;\beta=\frac{3}{q}-\frac{1}{2},\text{ we assume }\notag\\
&\mu\in\left[0,\frac{6p-3q(p-1)}{2(6p+pq-3q)}\right].\notag
\end{align}
Now we are ready to show our Liouville type results of logarithmic improvement version.
\begin{Thm}\label{main3}
Let $(u,\pi,B)$ be a smooth solution of \eqref{equ1.1}. Assume that $\lambda,\mu$ satisfy \eqref{ass1.8} and \eqref{ass1.9}.  Suppose that one of the following assumptions holds
\begin{align*}
\mathrm{(C1)}\;&\limsup\limits_{R\rightarrow+\infty}\left[X_{p,\alpha,\lambda}(R)+Y_{q,\beta,\mu}(R)\right]<+\infty,\text{ where  $p,q,\alpha,\beta$ satisfy}\\
&p\in\left(\frac{3}{2},3\right),\;q\in[1,2p'),\;\alpha\in\left[0,\frac{2}{p}-\frac{1}{3}\right],\;\beta\in\left[0,\frac{3}{q}-\frac{1}{2}\right],
\;\alpha+\frac{(4p-6)q}{(6-q)p}\beta\leq1,\\
\mathrm{(C2)}\;&\limsup\limits_{R\rightarrow+\infty}\left[X_{p,\alpha,\lambda}(R)+Y_{q,\beta,\mu}(R)\right]<+\infty,\text{ where  $p,q,\alpha,\beta$ satisfy}\\
&p\in\left(\frac{3}{2},3\right),\;q\in[2p',6],\;\alpha\in\left[0,\frac{2}{p}-\frac{1}{3}\right],\;\beta\in\left[0,\frac{3}{q}-\frac{1}{2}\right],\;
\alpha+2\beta<\frac{3}{p}+\frac{6}{q}-2;\\
\mathrm{(C3)}\;&\lim\limits_{R\rightarrow+\infty}X_{p,\alpha}(R)=0,\;\limsup\limits_{R\rightarrow+\infty}Y_{q,\beta,\mu}(R)<+\infty,\text{ where  $p,q,\alpha,\beta$ satisfy}\\
&p\in\left[3,\frac{9}{2}\right],\;q\in[1,2p'),\;\alpha\in\left[0,\frac{3}{p}-\frac{2}{3}\right],\;\beta\in\left[0,\frac{3}{q}-\frac{1}{2}\right],
\;\alpha+\frac{(4p-6)q}{(6-q)p}\beta\leq1.
\end{align*}
Then $u =B= 0$.
\end{Thm}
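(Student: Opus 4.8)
The plan is to carry out the localized energy method behind Theorems~\ref{main1} and \ref{main2}, but to track the logarithmic weights at every step so that the borderline (critical) scaling can be absorbed by a differential inequality rather than by a clean iteration. Fix a radial cutoff $\phi=\phi_R$ with $\phi\equiv1$ on $B_{3R/2}$, $\supp\phi\subset B_{2R}$ and $|\nabla\phi|\le CR^{-1}$, so that $\nabla\phi$ is supported on the annulus $A_R$. Testing the momentum equation of \eqref{equ1.1} with $u\phi^2$ and the induction equation with $B\phi^2$, integrating by parts and using $\dv u=\dv B=0$, I obtain an identity of the form
\[
\int_{\R^3}\big(|\nabla u|^2+|\nabla B|^2\big)\phi^2\,dx=\int_{A_R}\big[\cdots\big]\,dx,
\]
whose right-hand side collects the convective term $\tfrac12|u|^2u\cdot\nabla\phi^2$, the magnetic coupling terms arising from $(B\cdot\nabla)B\cdot u$, $(u\cdot\nabla)B\cdot B$ and $(B\cdot\nabla)u\cdot B$, the pressure term $\pi\,u\cdot\nabla\phi^2$, and the lower-order gradient terms $\nabla u:(u\otimes\nabla\phi^2)$ and $\nabla B:(B\otimes\nabla\phi^2)$. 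Every one of these is supported on $A_R$, which is the origin of the annular quantities $X_{p,\alpha,\lambda}$ and $Y_{q,\beta,\mu}$.

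Two technical inputs then feed the estimate. First, the pressure: I would recover $\pi$ (up to an additive constant) on a slightly enlarged annulus from $-\Delta\pi=\dv\dv(u\otimes u-B\otimes B)$ by a local pressure estimate, so that $\|\pi-\overline\pi\|_{L^{r}(A_R)}$ is controlled by $\|u\|_{L^{2r}(A_R)}^2+\|B\|_{L^{2r}(A_R)}^2$. Second, interpolation: each cubic term is handled by H\"older together with the Gagliardo--Nirenberg inequality on the annulus, expressing $\|u\|_{L^3(A_R)}$ and the mixed norms $\|u\|_{L^a(A_R)}\|B\|_{L^b(A_R)}^2$ in terms of $\|u\|_{L^p(A_R)}$, $\|B\|_{L^q(A_R)}$ and fractional powers of the annular Dirichlet energy; the lower-order gradient terms are absorbed into the left-hand side by Young's inequality. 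The two regimes $q<2p'$ and $q\ge2p'$ --- distinguishing (C1) from (C2), and the two branches inside (C3) --- correspond precisely to the two ways of distributing integrability between the coupling factors, and this is what produces the two structural constraints $\alpha+\tfrac{(4p-6)q}{(6-q)p}\beta\le1$ and $\alpha+2\beta<\tfrac3p+\tfrac6q-2$.

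After these substitutions the localized estimate takes the schematic form
\[
D(R)\le C\,R^{\rho}(\ln R)^{\sigma}\,\big[\widetilde D(R)\big]^{\theta},
\]
where $D(R)=\int_{B_{3R/2}}(|\nabla u|^2+|\nabla B|^2)\,dx$, $\widetilde D(R)=\int_{A_R}(|\nabla u|^2+|\nabla B|^2)\,dx=D(2R)-D(R)$ is the annular energy, $\theta\in(0,1]$, and $\rho,\sigma$ are explicit combinations of $p,q,\alpha,\beta,\lambda,\mu$. The parameter constraints in (C1)--(C3) are arranged to force $\rho\le0$; in the genuinely critical configurations one has $\rho=0$ and $\theta=1$, so that only a logarithmic factor survives, and the role of \eqref{ass1.8}--\eqref{ass1.9} is to pin $\sigma$ down to the borderline value $\sigma\le1$. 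Read as $D(2R)\ge\big(1+c(\ln R)^{-\sigma}\big)D(R)$ and iterated over dyadic radii $R_k=2^kR_0$, the inequality forces $D(R_k)\ge D(R_0)\prod_{j}\big(1+c(\ln R_j)^{-\sigma}\big)$; since $\sigma\le1$ makes $\sum_j(\ln R_j)^{-\sigma}$ diverge, $D$ is driven to grow at a rate incompatible with the (at most polylogarithmic) upper bound that the same energy estimate produces from the finite $\limsup$ of $X_{p,\alpha,\lambda}+Y_{q,\beta,\mu}$. This contradiction is resolvable only if $D\equiv0$.

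Once $D\equiv0$, i.e.\ $\nabla u\equiv\nabla B\equiv0$, both $u$ and $B$ are constant; since the admissible exponents give $R^{\alpha}(\ln R)^{\lambda}=o(R^{3/p})$ and $R^{\beta}(\ln R)^{\mu}=o(R^{3/q})$ while a nonzero constant would make $\|u\|_{L^p(A_R)}\sim R^{3/p}$ and $\|B\|_{L^q(A_R)}\sim R^{3/q}$, the finiteness of $\limsup(X_{p,\alpha,\lambda}+Y_{q,\beta,\mu})$ (respectively $\lim X_{p,\alpha}=0$ in (C3)) leaves only $u=B=0$. I expect the principal obstacle to be the bookkeeping in the critical step: one must choose the interpolation exponents so that all polynomial powers of $R$ cancel \emph{exactly} (yielding $\rho=0$) while the net logarithmic power stays at or below the threshold $\sigma\le1$ dictated by \eqref{ass1.9}, and one must verify that neither the magnetic coupling nor the pressure consumes this narrow logarithmic margin. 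The break-points $q=2p'$ and $p=3$, together with the endpoint $\beta=\tfrac3q-\tfrac12$ at which \eqref{ass1.9} changes form, make this a delicate case-by-case optimization; case (C3) is special in that $u$ carries no logarithmic weight, so there the entire logarithmic gain must be extracted from the magnetic term $Y_{q,\beta,\mu}$, which is exactly why only $\mu$ (and not $\lambda$) is constrained in the corresponding branches of \eqref{ass1.9}.
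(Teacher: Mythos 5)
Your overall scaffolding (annular Caccioppoli estimate, interpolation of the coupling terms according to $q<2p'$ versus $q\ge 2p'$, a differential/iterative inequality in the Dirichlet energy, and the final step from constants to zero) matches the paper's architecture, but the mechanism you propose for the critical case is wrong, and it is precisely the critical case that this theorem is about. You claim that in the borderline configurations one gets $\rho=0$, $\theta=1$, and that the dyadic iteration $D(2R)\ge\bigl(1+c(\ln R)^{-\sigma}\bigr)D(R)$ with $\sigma\le 1$ forces growth incompatible with the polylogarithmic upper bound. This does not close quantitatively: at the borderline $\sigma=1$ the product $\prod_{j\le k}\bigl(1+c\,j^{-1}\bigr)\sim k^{c}\sim(\ln R_k)^{c}$, where $c=1/C$ is the reciprocal of the (large, uncontrolled) constant in your energy estimate, while the upper bound produced by the same estimates is $D(R)\le C(\ln R)^{\kappa}$ with a fixed exponent such as $\kappa=\frac{9-3p}{2p-3}$ (see \eqref{ine4.24}); a contradiction would require $c>\kappa$, which you cannot arrange. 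The paper never takes $\theta=1$: all exponents are strictly concave, e.g. $\frac{9-3p}{6-p}$, $\frac{6p-3q(p-1)}{(6-q)p}$ and $\frac{2\mu}{1+2\mu}$, all $<1$ exactly because $p>\frac32$, $p<3$ (or $\mu<\infty$). With $\theta<1$ one gets $E(R)\le C[R\ln R\,E'(R)]^{\theta}+(\text{decaying terms})$ as in \eqref{ine4.26}, hence $E'(\rho)/E(\rho)^{1/\theta}\gtrsim(\rho\ln\rho)^{-1}$ on the bad set, and the contradiction comes from integrating: $\int_{R_0}^{R}E'E^{-1/\theta}d\rho$ converges (since $1/\theta>1$ and $E\ge E(R_0)>0$) while $\int_{R_0}^{R}\frac{d\rho}{\rho\ln\rho}=\ln\ln R-\ln\ln R_0$ diverges. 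Note also that the role of \eqref{ass1.9} is not to enforce ``$\sigma\le1$'' but to cap the accumulated logarithmic weight by $\theta$ itself, so that $(\ln R)^{\lambda+\frac{(4p-6)q}{(6-q)p}\mu}[RE'(R)]^{\theta}\le C[R\ln R\,E'(R)]^{\theta}$; insisting on $\theta=1$ discards the concavity on which the entire proof rests.

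Two further ingredients are missing or misstated. First, the pressure: the paper does not estimate $\pi$ at all; it removes the pressure by testing with the divergence-free field $v\eta-w$, where $w$ is the Bogovskii correction of Lemmas \ref{Lem2.1}--\ref{Lem2.2} solving $\mathrm{div}\,w=(u-\overline{u}_R)\cdot\nabla\eta$ with the scale-invariant bound \eqref{ine4.2}. Your proposed bound $\|\pi-\overline{\pi}\|_{L^{r}(A_R)}\le C(\|u\|_{L^{2r}(A_R)}^2+\|B\|_{L^{2r}(A_R)}^2)$ on the \emph{same} annulus is not a standard fact (the harmonic part of the local pressure decomposition is nonlocal and must be handled separately), so this step would itself need proof. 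Second, and more importantly, you implicitly use an annular Sobolev inequality $\|u\|_{L^6(A_R)}\le C\|\nabla u\|_{L^2(A_R)}$, which is false on an annulus (constants violate it): the paper must subtract the annular means, $v=u-\overline{u}_R$, $H=B-\overline{B}_R$, apply the Poincar\'e and Sobolev--Poincar\'e inequalities to $v,H$, and then estimate the extra terms generated by $\overline{u}_R$ and $\overline{B}_R$ in the nonlinearities (the analogues of \eqref{ine4.10}, \eqref{ine4.12}, \eqref{ine4.14}, \eqref{ine4.16}); none of this appears in your outline. Finally, the link between the annular Dirichlet energy and $E'(R)$ requires the specific piecewise-linear cutoff and the monotonicity computation \eqref{ine4.1}, combined with the Section \ref{sec3} iteration bound $f(R)\le C(\ln R)^{\kappa}$ through the splitting $\|\nabla u\|_{L^2(A_R)}^{2}+\|\nabla B\|_{L^2(A_R)}^{2}\le C[f(2R)]^{1-\theta}[RE'(R)]^{\theta}$ as in \eqref{ine4.25}; your discrete surrogate $\widetilde D(R)=D(2R)-D(R)$ is what funnels you into the flawed iteration above.
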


\begin{Thm}\label{main4}
Let $(u,\pi,B)$ be a smooth solution of \eqref{equ1.2}. Assume that $\lambda,\mu$ satisfy \eqref{ass1.8} and \eqref{ass1.9}. Suppose that one of the following assumptions holds
\begin{align*}
\mathrm{(D1)}\;&\limsup\limits_{R\rightarrow+\infty}\left[X_{p,\alpha,\lambda}(R)+Y_{q,\beta,\mu}(R)\right]<+\infty,\text{ where  $p,q,\alpha,\beta$ satisfy}\\
&p\in\left(\frac{3}{2},3\right),\;q\in(3,2p'),\;\alpha\in\left[0,\frac{2}{p}-\frac{1}{3}\right],\;\beta\in\left[0,\frac{3}{q}-\frac{1}{2}\right],\;
\alpha+\frac{(4p-6)q}{(6-q)p}\beta\leq1;\\
\mathrm{(D2)}\;&\limsup\limits_{R\rightarrow+\infty}\left[X_{p,\alpha,\lambda}(R)+Y_{q,\beta,\mu}(R)\right]<+\infty,\text{ where  $p,q,\alpha,\beta$ satisfy}\\
&p\in\left(\frac{3}{2},3\right),\;q\in[2p',6],\;\alpha\in\left[0,\frac{2}{p}-\frac{1}{3}\right],\;\beta\in\left[0,\frac{3}{q}-\frac{1}{2}\right],\;
\alpha+2\beta<\frac{3}{p}+\frac{6}{q}-2.
\end{align*}
Then $u =B= 0$.
\end{Thm}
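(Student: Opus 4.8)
The plan is to run the energy-method and differential-inequality machinery underlying Theorem \ref{main3} for the MHD system, and to absorb the additional Hall term $\mathrm{curl}(\mathrm{curl}B\times B)$ into that scheme; the only genuinely new restriction it forces is $q>3$, so that the admissible region for $(\alpha,\beta)$ in (D1) and (D2) coincides with that of (C1) and (C2).

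First I would fix a smooth radial cutoff $\phi=\phi_R$ supported in a dyadic neighbourhood of $A_R$ with $|\nabla\phi|\lesssim R^{-1}$, test the momentum equation against $u\phi^2$ and the induction equation against $B\phi^2$, and integrate by parts. This yields a Caccioppoli-type inequality bounding the localized Dirichlet energy $\int\phi^2(|\nabla u|^2+|\nabla B|^2)\,dx$ by cutoff-derivative terms, the convection/Lorentz/coupling terms, the pressure term $\int\pi\,u\cdot\nabla\phi^2\,dx$, and the Hall contribution. The pressure is treated exactly as in the MHD case through a localized decomposition on the annulus, so that $\pi$ enters only via a mean-oscillation quantity controlled by $\|u\|_{L^p(A_R)}$ and $\|B\|_{L^q(A_R)}$; the remaining non-Hall terms are estimated by Hölder, Sobolev and interpolation between $L^p$, $L^q$ and $L^6$, which is where the mixed exponent $\frac{(4p-6)q}{(6-q)p}$ and the constraints of (C1)/(C2) originate.

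The crux is the Hall contribution. Moving the outer curl onto the test field gives
\begin{equation*}
\int \mathrm{curl}(\mathrm{curl}B\times B)\cdot B\phi^2\,dx=\int(\mathrm{curl}B\times B)\cdot\mathrm{curl}(B\phi^2)\,dx,
\end{equation*}
and since $\mathrm{curl}(B\phi^2)=\phi^2\,\mathrm{curl}B+\nabla(\phi^2)\times B$, the leading piece vanishes because $\mathrm{curl}B\times B\perp\mathrm{curl}B$. Thus only the commutator term $\int(\mathrm{curl}B\times B)\cdot(\nabla(\phi^2)\times B)\,dx$ survives, and it is dominated by $CR^{-1}\int_{A_R}|\nabla B|\,|B|^2\,dx\le CR^{-1}\|\nabla B\|_{L^2(A_R)}\|B\|_{L^4(A_R)}^2$. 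Interpolating $\|B\|_{L^4(A_R)}$ between $\|B\|_{L^q(A_R)}$ and the Sobolev-controlled $\|B\|_{L^6(A_R)}\lesssim\|\nabla B\|_{L^2(A_R)}+R^{-1}\|B\|_{L^2(A_R)}$ is precisely the step that requires $q>3$; once $q>3$ this term fits inside the same energy budget as the MHD coupling terms and contributes no new constraint on $(\alpha,\beta)$ beyond the range of $q$.

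Collecting everything, I would arrive at a differential/iteration inequality for $y(R)=\int_{B_R}(|\nabla u|^2+|\nabla B|^2)\,dx$ of the form $y(R)\le \Theta(R)\,R^{-a}(\ln R)^{-b}\,y(2R)^{\kappa}+(\text{lower order})$, with $\Theta(R)$ bounded by the hypothesis $\limsup(X_{p,\alpha,\lambda}+Y_{q,\beta,\mu})<+\infty$. The final step is to iterate this across dyadic scales while carefully bookkeeping the logarithmic weights: assumptions \eqref{ass1.8}--\eqref{ass1.9} are calibrated so that the accumulated $(\ln R)$ powers remain summable (or force $y\equiv0$), thereby converting the borderline $R$-growth allowed in Theorem \ref{main2} into convergence with the logarithmic gains $(\ln R)^{-\lambda}$ and $(\ln R)^{-\mu}$. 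I expect the main obstacle to be the interplay of the Hall-induced loss with this logarithmic iteration in the critical regime $\beta=\frac3q-\frac12$, where the second branch of \eqref{ass1.9} is needed and the margin is tightest; once $y\equiv0$ one gets $\nabla u=\nabla B=0$, and the growth hypotheses then upgrade this to $u=B=0$.
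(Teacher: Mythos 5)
Your treatment of the Hall term itself is correct and matches the paper: the integration-by-parts identity, the orthogonality $(\mathrm{curl}B\times B)\perp\mathrm{curl}B$ killing the leading piece, the surviving commutator bounded by $CR^{-1}\int_{A_R}|\nabla B||B|^2dx$, and the observation that interpolating $L^4$ between $L^q$ and $L^6$ is exactly what forces $q>3$ (this is the paper's Lemma \ref{Lem2.7}, where the Young exponent $\frac{18-4q}{6-q}<2$ requires $q>3$). The reduction ``the Hall contribution is harmless and the rest runs as in the MHD case'' is also precisely the paper's proof of Theorem \ref{main4}. However, there are two concrete gaps. First, in the refined energy estimate you test with $u\phi^2$, $B\phi^2$ without subtracting the annulus means $\overline{u}_R$, $\overline{B}_R$. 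The paper works with $v=u-\overline{u}_R$, $H=B-\overline{B}_R$ (see \eqref{equ4.3}--\eqref{ine4.17}): the Poincar\'e and Sobolev--Poincar\'e inequalities on $A_R$ are what convert the cutoff terms like $R^{-1}\|\nabla v\|_{L^2(A_R)}\|v\|_{L^2(A_R)}$ into pure annulus Dirichlet energy, and the recentering produces a second Hall piece $\mathrm{curl}(\mathrm{curl}H\times\overline{B}_R)$, estimated in \eqref{ine4.12} by $CR^{-\frac{3}{q}}\|B\|_{L^q(A_R)}\|\nabla B\|_{L^2(A_R)}^2$, which your computation never sees; without mean subtraction the scheme does not close.

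Second, and more fundamentally, your endgame is not a proof. An inequality of the form $y(R)\le\Theta(R)R^{-a}(\ln R)^{-b}y(2R)^{\kappa}$ iterated over dyadic scales cannot force $y\equiv0$ at the borderline exponents of (D1)--(D2): there the hypotheses make the prefactor $O(1)$ or even logarithmically growing, and iteration with $\kappa<1$ yields only a polylogarithmic a priori bound $f(R)\le C(\ln R)^{c}$ --- which is exactly the paper's \emph{first} step (cf.\ \eqref{ine4.24}, \eqref{ine4.34}), not the conclusion. The actual mechanism, and the paper's declared novelty, is a differential inequality in $R$: the monotone weighted energy $E(R)=\int(|\nabla u|^2+|\nabla B|^2)\eta\,dx$ with piecewise-linear cutoff satisfies $E'(R)\ge\frac{3}{R}\int_{A_R}(|\nabla u|^2+|\nabla B|^2)dx$ by \eqref{ine4.1}; the splitting $\|\nabla u\|_{L^2(A_R)}^2\le C[f(2R)]^{1-\theta}[RE'(R)]^{\theta}$ combined with the polylog bound gives $E(R)\le C[R\ln R\,E'(R)]^{\theta}+(\text{decaying terms})$, and integrating $E'/E^{1/\theta}$ against $d\rho/(\rho\ln\rho)$ contradicts the divergence of $\ln\ln R$ unless $E\equiv0$. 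Note moreover that in this refined inequality the paper absorbs the Hall terms using the polylog bound itself, via
\begin{equation*}
R^{-\frac{1}{2}}\|\nabla B\|_{L^2(A_R)}+R^{-\frac{3}{q}}\|B\|_{L^q(A_R)}\leq R^{-\frac{1}{2}}[f(2R)]^{\frac{1}{2}}+CR^{-\frac{3}{q}+\beta}(\ln R)^{\mu}\leq C,
\end{equation*}
so the Hall contribution is swallowed by $C\|\nabla B\|_{L^2(A_R)}^2$; your Young-based absorption, by contrast, leaks a complementary term that at the endpoint $\beta=\frac{3}{q}-\frac{1}{2}$ grows like a positive power of $\ln R$ with no $R$-decay, which the differential-inequality argument cannot tolerate. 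Without the $E'(R)$ structure and this absorption, the borderline cases you yourself flag as tightest remain unproved.
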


For the special endpoint case $(\alpha,\beta)=(\frac{3}{p}-1,\frac{3}{q}-\frac{1}{2})$, we also have the logarithmic improvement version.
\begin{Rem}\label{Rem1.6}
Both $\mathrm{(C2)}$ and $\mathrm{(D2)}$ can be replaced by the following assumption
\begin{align*}
\mathrm{(C4)}\;&\limsup\limits_{R\rightarrow+\infty}\left[X_{p,\alpha,\lambda}(R)+Y_{q,\beta,\mu}(R)\right]<+\infty,\text{ where  $p,q,\alpha,\beta,\lambda,\mu$ satisfy}\\
p\in&\left(\frac{3}{2},3\right),\;q\in[2p',6],\;\alpha=\frac{3}{p}-1,\;\beta=\frac{3}{q}-\frac{1}{2},\;\lambda\in\left[0,\frac{3}{p}-1\right],
\;\mu\geq0,\;\lambda+\mu\leq\frac{1}{2}.
\end{align*}
\end{Rem}

As a consequence of Theorem \ref{main1} and Theorem \ref{main2}, we can show that the velocity and magnetic field are trivial provided that they belong to some Lebesgue spaces or satisfy some decay conditions at infinity.

\begin{Cor}\label{Cor1.7}
Let $(u,\pi,B)$ be a smooth solution of \eqref{equ1.1}. If $u\in L^p(\mathbb{R}^3)$, $B\in L^q(\mathbb{R}^3)$, and $p,q$ satisfy
$$p\in \left(\frac{3}{2},\frac{9}{2}\right],\;q\in[1,6],\;\frac{1}{p}+\frac{2}{q}\geq\frac{2}{3},$$
then $u = B=0$.
\end{Cor}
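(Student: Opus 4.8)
The plan is to deduce Corollary~\ref{Cor1.7} directly from Theorem~\ref{main1}; the only work beyond that theorem is to convert global integrability into decay of the annular norms, after which the hypotheses of one of $\mathrm{(A1)}$--$\mathrm{(A4)}$ hold with the trivial exponents $\alpha=\beta=0$.

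First I would record the elementary reduction. Since $u\in L^p(\mathbb{R}^3)$, absolute continuity of the Lebesgue integral gives $\int_{\{|x|>M\}}|u|^p\,dx\to0$ as $M\to+\infty$; because $A_R=B_{2R}\backslash\overline{B_{\frac{3R}{2}}}\subset\{|x|>\frac{3R}{2}\}$, this forces
\[
X_{p,0}(R)=\|u\|_{L^p(A_R)}\longrightarrow0\quad(R\to+\infty),
\]
and likewise $Y_{q,0}(R)=\|B\|_{L^q(A_R)}\to0$. Consequently every $\liminf$ and $\limsup$ condition appearing in $\mathrm{(A1)}$--$\mathrm{(A4)}$ is automatically met (each relevant quantity in fact tends to $0$), so only the algebraic constraints on $(p,q,\alpha,\beta)$ remain to be verified, and I would verify them with $\alpha=\beta=0$.

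Next I would split $p\in(\frac{3}{2},\frac{9}{2}]$, $q\in[1,6]$ into four regimes according to the signs of $p-3$ and $q-2p'$. With $\alpha=\beta=0$ the admissibility bounds $\alpha\le\frac{2}{p}-\frac{1}{3}$ (resp.\ $\frac{3}{p}-\frac{2}{3}$) and $\beta\le\frac{3}{q}-\frac{1}{2}$ hold since $p\le\frac{9}{2}$ and $q\le6$. For the coupling constraints: if $q<2p'$ then $\frac{2}{q}>1-\frac{1}{p}$, whence $\frac{1}{p}+\frac{2}{q}>1$, and the mixed constraint $\alpha+\frac{(4p-6)q}{(6-q)p}\beta\le1$ reads $0\le1$, so $\mathrm{(A1)}$ (for $p<3$) or $\mathrm{(A3)}$ (for $p\ge3$) applies; if $2p'\le q\le6$ and $p\in(\frac{3}{2},3)$, then $\frac{1}{p}>\frac{1}{3}$ together with $\frac{2}{q}\ge\frac{1}{3}$ gives $\frac{1}{p}+\frac{2}{q}>\frac{2}{3}$, i.e.\ the strict inequality $0<\frac{3}{p}+\frac{6}{q}-2$ of $\mathrm{(A2)}$; and if $2p'\le q\le6$ and $p\in[3,\frac{9}{2}]$, the constraint $0\le\frac{3}{p}+\frac{6}{q}-2$ of $\mathrm{(A4)}$ is exactly the standing hypothesis $\frac{1}{p}+\frac{2}{q}\ge\frac{2}{3}$. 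In every regime Theorem~\ref{main1} then yields $u=B=0$.

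The only delicate point, and the place I would look hardest, is the endpoint line $\frac{1}{p}+\frac{2}{q}=\frac{2}{3}$, where $\mathrm{(A2)}$ fails because its inequality is strict. I expect to dispose of it by a range check rather than by invoking Remark~\ref{Rem1.3}: within $q\le6$ the equality can hold only for $p\ge3$ (for $p<3$ the bound $\frac{1}{p}>\frac{1}{3}$ already forces the strict inequality), and at equality with $p\ge3$ one has $\frac{2}{q}=\frac{2}{3}-\frac{1}{p}\le1-\frac{1}{p}$, that is $q\ge2p'$, so the endpoint is absorbed by the non-strict constraint of $\mathrm{(A4)}$. Hence no separate borderline argument is needed, and the corollary follows once the annular decay is in hand.
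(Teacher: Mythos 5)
Your proposal is correct and follows essentially the same route as the paper: global integrability gives $\|u\|_{L^p(A_R)},\|B\|_{L^q(A_R)}\to 0$, and your four-way split by the signs of $p-3$ and $q-2p'$ with $\alpha=\beta=0$ is exactly the paper's Cases I--IV, all resolved by Theorem~\ref{main1}. The only divergence is at the borderline $\frac{1}{p}+\frac{2}{q}=\frac{2}{3}$, where the paper invokes Remark~\ref{Rem3.1} for Cases II and IV, while your range check (equality with $q\leq 6$ forces $p\geq 3$ and $q\geq 2p'$, so the endpoint lands in the non-strict constraint of $\mathrm{(A4)}$, and for $p<3$, $q\leq 6$ the strict inequality of $\mathrm{(A2)}$ holds automatically) shows the Remark is dispensable --- a slight streamlining of the same argument.
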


\begin{Cor}\label{Cor1.8}
Let $(u,\pi,B)$ be a smooth solution of \eqref{equ1.2}. If $u\in L^p(\mathbb{R}^3)$, $B\in L^q(\mathbb{R}^3)$, and $p,q$ satisfy
$$p\in \left(\frac{3}{2},\frac{9}{2}\right],\;q\in(3,6],\;\frac{1}{p}+\frac{2}{q}\geq\frac{2}{3},$$
then $u = B=0$.
\end{Cor}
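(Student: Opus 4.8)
The plan is to deduce Corollary \ref{Cor1.8} directly from Theorem \ref{main2} by taking the growth exponents $\alpha=\beta=0$ and exploiting the absolute continuity of the Lebesgue integral. First I would record the elementary fact that global integrability forces the norm on the escaping annuli $A_R=B_{2R}\setminus\overline{B_{3R/2}}$ to vanish: since $u\in L^p(\mathbb{R}^3)$,
\[
\|u\|_{L^p(A_R)}^p\leq\int_{|x|\geq\frac{3R}{2}}|u|^p\,dx\longrightarrow 0\qquad(R\to+\infty)
\]
by dominated convergence, and likewise $\|B\|_{L^q(A_R)}\to0$. Hence for every $\alpha,\beta\geq0$ and all $R\geq1$ one has $X_{p,\alpha}(R)\leq\|u\|_{L^p(A_R)}\to0$ and $Y_{q,\beta}(R)\leq\|B\|_{L^q(A_R)}\to0$. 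Choosing $\alpha=\beta=0$, all the quantitative hypotheses of (B1)--(B3) hold in their strongest form, since $\liminf_R[X_{p,0}(R)+Y_{q,0}(R)]=0$, $\lim_R X_{p,0}(R)=0$ and $\limsup_R Y_{q,0}(R)=0<+\infty$. It therefore remains only to verify, for each admissible $(p,q)$, that the structural inequalities on $(p,q,0,0)$ required by one of (B1), (B2), (B3) are satisfied.

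Next I would split into cases according to the value of $p$. For $p\in[3,\tfrac92]$ and $q\in(3,6]$ I invoke (B3): the membership constraints $\alpha\in[0,\tfrac3p-\tfrac23]$ and $\beta\in[0,\tfrac3q-\tfrac12]$ hold at $\alpha=\beta=0$ because $p\leq\tfrac92$ and $q\leq6$, while the remaining constraint $\alpha+2\beta\leq\tfrac3p+\tfrac6q-2$ reduces to exactly $\tfrac1p+\tfrac2q\geq\tfrac23$, which is the hypothesis of the corollary. For $p\in(\tfrac32,3)$ I note that $2p'\in(3,6)$, so the range $q\in(3,6]$ decomposes as $q\in(3,2p')$ or $q\in[2p',6]$; in the former case (B1) applies, its only structural requirement $\alpha+\tfrac{(4p-6)q}{(6-q)p}\beta\leq1$ being trivially $0\leq1$, and in the latter case (B2) applies provided the strict inequality $\tfrac3p+\tfrac6q-2>0$ holds.

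The one point requiring care is precisely this strict inequality in (B2), since the corollary assumes only the non-strict condition $\tfrac1p+\tfrac2q\geq\tfrac23$. Here I would observe that on the range $p\in(\tfrac32,3)$, $q\in(3,6]$ one has $\tfrac1p>\tfrac13$ and $\tfrac2q\geq\tfrac13$, so that $\tfrac1p+\tfrac2q>\tfrac23$ strictly; thus $\tfrac3p+\tfrac6q-2>0$ automatically and (B2) is applicable, the hypothesis $\tfrac1p+\tfrac2q\geq\tfrac23$ being in fact vacuous when $p<3$. The endpoint $\tfrac1p+\tfrac2q=\tfrac23$ can only occur for $p\geq3$, and there it is absorbed by the non-strict inequality in (B3). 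Since the three cases together exhaust $p\in(\tfrac32,\tfrac92]$, $q\in(3,6]$ with $\tfrac1p+\tfrac2q\geq\tfrac23$, Theorem \ref{main2} yields $u=B=0$ in every case. There is no genuine analytic obstacle beyond this bookkeeping of parameter ranges and the careful handling of the strict-versus-non-strict distinction at the boundary line.
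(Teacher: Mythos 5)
Your proposal is correct and follows essentially the same route as the paper: both deduce the corollary from Theorem \ref{main2} with $\alpha=\beta=0$, using that $u\in L^p(\mathbb{R}^3)$, $B\in L^q(\mathbb{R}^3)$ force $\|u\|_{L^p(A_R)},\|B\|_{L^q(A_R)}\rightarrow0$, and then splitting $q\in(3,6]$ at $2p'$ for $p\in\left(\frac{3}{2},3\right)$ (cases (B1), (B2)) and invoking (B3) for $p\in\left[3,\frac{9}{2}\right]$, where $0\leq\frac{3}{p}+\frac{6}{q}-2$ is exactly the hypothesis $\frac{1}{p}+\frac{2}{q}\geq\frac{2}{3}$. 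The one harmless divergence concerns the borderline $\frac{1}{p}+\frac{2}{q}=\frac{2}{3}$: the paper's template (via the proof of Corollary \ref{Cor1.7}) routes the $q\geq 2p'$ cases through Remark \ref{Rem3.1}, whereas you observe, correctly, that equality cannot occur when $p<3$ (since $\frac{1}{p}>\frac{1}{3}$ and $\frac{2}{q}\geq\frac{1}{3}$), so the strict inequality in (B2) holds automatically and the endpoint is absorbed by the non-strict inequality in (B3) for $p\geq3$, making Remark \ref{Rem3.1} unnecessary.
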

Since the condition $u(x)=o(|x|^{-\frac{2}{3}})$, $B(x)=O(|x|^{-\frac{2}{3}})$ as $|x|\rightarrow+\infty$ indicates
$$
\lim_{R\rightarrow+\infty}\|u\|_{L^\frac{9}{2}\left(A_R\right)}=0,\;\limsup_{R\rightarrow+\infty}\|B\|_{L^\frac{9}{2}\left(A_R\right)}<+\infty,
$$
applying Theorem \ref{main1} and Theorem \ref{main2} with $\alpha=\beta=0$, $p=q=\frac{9}{2}$, we have the following corollary.
\begin{Cor}
Let $(u,\pi,B)$ be a smooth solution of \eqref{equ1.1} or \eqref{equ1.2}. If $u(x)=o(|x|^{-\frac{2}{3}})$, $B(x)=O(|x|^{-\frac{2}{3}})$ as $|x|\rightarrow+\infty$, then $u = B=0$.
\end{Cor}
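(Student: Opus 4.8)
The plan is to deduce the statement directly from Theorem~\ref{main1} and Theorem~\ref{main2} by converting the pointwise decay hypotheses into bounds for the rescaled annular quantities $X_{\frac{9}{2},0}(R)$ and $Y_{\frac{9}{2},0}(R)$, and then invoking the relevant assumption with $p=q=\frac{9}{2}$, $\alpha=\beta=0$. The key observation is that on the annulus $A_R=B_{2R}\setminus\overline{B_{3R/2}}$ one has $|x|\sim R$ uniformly and $|A_R|\sim R^3$, so a pointwise bound of the form $|x|^{-2/3}$ translates into an $L^{9/2}$ bound of \emph{exactly} borderline size: since $\frac{2}{3}\cdot\frac{9}{2}=3$, the decay exponent and the volume growth cancel.

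First I would treat the magnetic field. From $B(x)=O(|x|^{-2/3})$ there is a constant $C$ with $|B(x)|\le C|x|^{-2/3}$ for large $|x|$; hence on $A_R$ (for $R$ large),
\[
\|B\|_{L^{9/2}(A_R)}^{9/2}=\int_{A_R}|B|^{9/2}\,dx\le C\Big(\tfrac{3R}{2}\Big)^{-3}|A_R|\le C,
\]
so that $\limsup_{R\to+\infty}Y_{\frac{9}{2},0}(R)=\limsup_{R\to+\infty}\|B\|_{L^{9/2}(A_R)}<+\infty$. For the velocity I would exploit the stronger little-$o$ decay: given any $\varepsilon>0$, for $R$ large we have $|u(x)|\le\varepsilon|x|^{-2/3}$ on $A_R$, and the same computation yields $\|u\|_{L^{9/2}(A_R)}^{9/2}\le C\varepsilon^{9/2}$. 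Letting $\varepsilon\to0$ gives $\lim_{R\to+\infty}X_{\frac{9}{2},0}(R)=0$, in particular $\liminf_{R\to+\infty}X_{\frac{9}{2},0}(R)=0$.

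It then remains to check that the choice $p=q=\frac{9}{2}$, $\alpha=\beta=0$ is admissible. For \eqref{equ1.1} this falls under assumption (A4): with $p=\frac{9}{2}$ we have $\frac{3}{p}-\frac{2}{3}=0$, so the admissible interval for $\alpha$ collapses to the single value $\{0\}$; also $2p'=\frac{18}{7}<\frac{9}{2}\le6$, so $q\in[2p',6]$; $\beta=0\in[0,\frac{3}{q}-\frac12]=[0,\frac16]$; and crucially $\alpha+2\beta=0=\frac{3}{p}+\frac{6}{q}-2$, which satisfies the \emph{non-strict} inequality required in (A4). For \eqref{equ1.2} the same parameters satisfy assumption (B3), since $q=\frac{9}{2}\in(3,6]$ and again $\alpha+2\beta=0\le\frac{3}{p}+\frac{6}{q}-2$. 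Applying the corresponding theorem then gives $u=B=0$ in both cases.

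The only genuinely delicate point is the endpoint algebra in the last step: the pair $(\alpha,\beta)=(0,0)$ lies exactly on the critical line $\alpha+2\beta=\frac{3}{p}+\frac{6}{q}-2$ when $p=q=\frac{9}{2}$. This is harmless precisely because the hypotheses are the \emph{separated} conditions $\lim X=0$ and $\limsup Y<+\infty$, which place us in the (A3)/(A4)/(B3) family, whose boundary inequalities are non-strict, rather than the joint condition $\liminf[X+Y]<+\infty$ of (A2)/(B2), whose strict inequality would exclude this endpoint. No further estimates beyond the elementary annular computation above are needed.
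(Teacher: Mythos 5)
Your proof is correct and follows essentially the same route as the paper, which likewise converts the decay hypotheses into $\lim_{R\rightarrow+\infty}\|u\|_{L^{9/2}(A_R)}=0$ and $\limsup_{R\rightarrow+\infty}\|B\|_{L^{9/2}(A_R)}<+\infty$ and then applies Theorem~\ref{main1} and Theorem~\ref{main2} with $p=q=\frac{9}{2}$, $\alpha=\beta=0$. Your careful check that $(\alpha,\beta)=(0,0)$ sits on the critical line but is admissible under the non-strict inequalities of (A4) and (B3) is a correct spelling-out of details the paper leaves implicit.
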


Finally, we give a summary of our work. We use a new iteration procedure to establish Liouville type theorems under the assumption that the velocity and magnetic field satisfy certain growth conditions of Lebesgue norms on the annulus. In our iteration procedure, the $L^2$ norms of $\nabla u$, $\nabla B$ and the $L^6$ norms of $u$, $B$ are iterated together.
We realize that the conditions on the magnetic field can be weaker and wider than the velocity field. We impose some requirements on $R^{-\alpha}\|u\|_{L^p\left(A_R\right)}$ for the velocity field and on $R^{-\beta}\|B\|_{L^q\left(A_R\right)}$ for the magnetic field, where the range of the parameters $\beta$ and $q$ is wider than that of $\alpha$ and $p$. It is not difficult to verify that our results cover and extend all Liouville results for \eqref{equ1.1} and \eqref{equ1.2} in \cite{CNY24}. Moreover, owing to Remark \ref{Rem1.3} and Remark \ref{Rem3.1}, the condition \eqref{cond1.7a} has an alternative condition
$$
\limsup_{R\rightarrow+\infty}\frac{\|u\|_{L^p(B_{2R}\backslash B_R)}}{R^{\frac{2}{p}-\frac{1}{3}}}<+\infty,\;\liminf_{R\rightarrow+\infty}\frac{\|B\|_{L^q(B_{2R}\backslash B_R)}}{R^{\frac{2}{q}-\frac{1}{3}}}=0,\;\frac{3}{2}<p<3,\;q=2p',
$$
while the condition \eqref{cond1.7b} with $p\neq3$ can be weaken to
$$\liminf_{R\rightarrow+\infty}\left(\frac{\|u\|_{L^p(B_{2R}\backslash B_R)}}{R^{\frac{3}{p}-1}}+\|B\|_{L^6(B_{2R}\backslash B_R)}\right)<+\infty,\;\frac{3}{2}<p<3.$$
Obviously, Corollary \ref{Cor1.7} improves the corresponding result of Chae-Lee \cite[Theorem 1.1]{CL24} (also see \eqref{cond1.6b}) by widening the range of $p$ and $q$.

On the other hand, Cho-Yang \cite{CY25} got a logarithmic improvement of Liouville type results for the Navier-Stokes equations. Specifically, they proved the triviality of the smooth solution of \eqref{equ1.3} if
$$
\limsup_{R\rightarrow+\infty}\frac{\|u\|_{L^p(A_R)}}{R^{\frac{2}{p}-\frac{1}{3}}(\ln R)^{\frac{3}{p}-1}}<+\infty,\;\frac{3}{2}<p<3.
$$
They proved this result by a special classified discussion based on
\begin{align*}
\liminf_{R\rightarrow+\infty}\frac{\|u\|_{L^3(A_R)}}{R^{\frac{2}{3}-\frac{1}{p}}(\ln R)^{\frac{3}{p}-1}}<+\infty\text{ or }=+\infty.
\end{align*}
To the best of our knowledge, their method can not be applied to obtain Liouville type results of the logarithmic version for the MHD equations and Hall-MHD equations. By developing a new method, we avoid the classified discussion and succeed in extending Cho-Yang's logarithmic improvement of Liouville type results for the Navier-Stokes equations to the MHD equations and Hall-MHD equations. Obviously, our results of logarithmic version improve the results of Cho-Neustupa-Yang \cite{CNY24} by logarithmic factors.

The rest of this paper is arranged as follows. In Section \ref{sec2}, we introduce the property of the Bogovskii operator and a useful iteration lemma. At the same time, we establish some key lemmas, which play an important role in the proof of Liouville type theorems. Section \ref{sec3} is devoted to proving Theorem \ref{main1} and Theorem \ref{main2} while Section \ref{sec4} is devoted to proving Theorem \ref{main3} and Theorem \ref{main4}. Throughout this article, we use $C$ to denote a finite inessential constant which may be different from line to line.

\v0.1in
\section{Preliminaries}\label{sec2}

The first lemma is the existence and boundedness of the Bogovskii map, which is used to handle the pressure term $\nabla \pi$.
\begin{Lem}\label{Lem2.1}$($See \cite[Lemma 1]{Tsai21}$)$
Let $E$ be a bounded Lipschitz domain in $\mathbb{R}^{3}$. Denote $L_{0}^r(E):=\{f\in L^r(E):\int_E fdx=0\}$ with $1<r<\infty$. There exists a linear operator
\begin{equation*}
\mathrm{Bog}:L_{0}^r(E)\rightarrow W_{0}^{1,r}(E),
\end{equation*}
such that for any  $f\in L_{0}^r(E),w=\mathrm{Bog}f$ is a vector field satisfying
\begin{equation*}
w\in W_{0}^{1,r}(E), \hspace{0.3cm}\mathrm{div}w=f,\hspace{0.3cm}\|\nabla w\|_{L^r(E)}\leq C_{\mathrm{Bog}}(E,r)\|f\|_{L^r(E)},
\end{equation*}
where the constant $C_{\mathrm{Bog}}(E,r)$  is independent of $f$. By a rescaling argument, we see
$$C_{\mathrm{Bog}}(RE,r)=C_{\mathrm{Bog}}(E,r),\text{ where $RE=\{Rx:x\in E\}$.}$$
\end{Lem}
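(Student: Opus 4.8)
The plan is to reproduce Bogovskii's classical construction, establishing the claim first on domains that are star-shaped with respect to an interior ball and then patching up to the general bounded Lipschitz domain $E$. First I would reduce to the star-shaped case: since $E$ is a bounded Lipschitz domain, it admits a finite open cover $\{U_k\}$ such that each $U_k\cap E$ is star-shaped with respect to some ball $B_k\subset U_k\cap E$. Choosing a smooth partition of unity $\{\chi_k\}$ subordinate to this cover and writing $f=\sum_k f_k$ with each $f_k$ a suitable modification of $\chi_k f$ corrected by a constant so that $\int_{U_k\cap E}f_k\,dx=0$, the global mean-zero condition $\int_E f\,dx=0$ is exactly what makes this splitting consistent. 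It then suffices to construct the operator on a single star-shaped piece and sum the results.

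On a domain $E_\star$ star-shaped with respect to a ball $B\subset E_\star$, I would fix a cutoff $\omega\in C_c^\infty(B)$ with $\int_B\omega\,dy=1$ and define explicitly
\[
w(x)=\int_{E_\star} f(y)\,\frac{x-y}{|x-y|^{3}}\left(\int_{|x-y|}^{\infty}\omega\!\left(y+\xi\,\frac{x-y}{|x-y|}\right)\xi^{2}\,d\xi\right)dy.
\]
The next step is to verify, by differentiating under the integral (valid for smooth $f$, then extended by density), that $\mathrm{div}\,w=f-\left(\int_{E_\star}f\,dy\right)\omega=f$ using the zero-mean property, and that $w$ vanishes near $\partial E_\star$ so that $w\in W_0^{1,r}(E_\star)$.

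The main work, and the principal obstacle, is the gradient bound $\|\nabla w\|_{L^r(E_\star)}\leq C\|f\|_{L^r(E_\star)}$ uniformly for all $1<r<\infty$. Differentiating the kernel splits $\partial_j w_i$ into a principal-value singular integral $\mathrm{p.v.}\int_{E_\star}K_{ij}(x,x-y)f(y)\,dy$, where $K_{ij}$ is homogeneous of degree $-3$ in its second argument with mean-zero angular part (a Calder\'on--Zygmund kernel), plus a weakly singular remainder with integrable kernel. The singular part is bounded on $L^r$ for $1<r<\infty$ by Calder\'on--Zygmund theory, while the remainder is controlled by Young's convolution inequality; this is the step where the restriction $1<r<\infty$ genuinely enters, and where the constant $C_{\mathrm{Bog}}(E_\star,r)$ is produced.

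Finally, for the rescaling identity I would argue directly. Given $f_R\in L_0^r(RE)$, set $f(x)=f_R(Rx)$ on $E$ (which has zero mean on $E$), let $w=\mathrm{Bog}_E f$, and define $w_R(x)=R\,w(x/R)$ on $RE$. A change of variables gives $\mathrm{div}_x\,w_R(x)=(\mathrm{div}\,w)(x/R)=f(x/R)=f_R(x)$, and since $\nabla_x w_R(x)=(\nabla w)(x/R)$, the Jacobian factor $R^{3}$ appears identically in both $\|\nabla w_R\|_{L^r(RE)}^r$ and $\|f_R\|_{L^r(RE)}^r$ and cancels in the quotient. Hence the ratio $\|\nabla w_R\|_{L^r(RE)}/\|f_R\|_{L^r(RE)}$ equals $\|\nabla w\|_{L^r(E)}/\|f\|_{L^r(E)}$, so the optimal constants satisfy $C_{\mathrm{Bog}}(RE,r)\leq C_{\mathrm{Bog}}(E,r)$; applying the same scaling with factor $1/R$ yields the reverse inequality and thus equality.
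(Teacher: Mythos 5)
The paper does not prove this lemma at all---it is quoted verbatim from \cite[Lemma 1]{Tsai21}, which in turn rests on the classical Bogovskii construction (as in Galdi's book, Section III.3). Your proposal reconstructs exactly that underlying proof, and it is correct in all essentials: the explicit Bogovskii kernel is stated correctly (in $\mathbb{R}^3$, with $|x-y|^{-3}$ and the $\xi^2$ weight), the identity $\mathrm{div}\,w=f-\omega\int_{E_\star}f\,dy$ together with the compact support of $w$ for $f\in C_c^\infty$ and extension by density is the standard route to $w\in W_0^{1,r}$, and the splitting of $\nabla w$ into a Calder\'on--Zygmund singular integral plus a weakly singular remainder is precisely where $1<r<\infty$ enters. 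Your scaling argument for $C_{\mathrm{Bog}}(RE,r)=C_{\mathrm{Bog}}(E,r)$ is also correct and is exactly the ``rescaling argument'' the lemma alludes to: with $w_R(x)=R\,w(x/R)$ one has $\nabla w_R(x)=(\nabla w)(x/R)$ and $\mathrm{div}\,w_R=f_R$, and the Jacobian factor $R^3$ cancels in the ratio of $L^r$ norms, giving equality of the (optimal, or conjugation-defined) constants in both directions.

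One step is glossed in a way that would fail if taken literally: in the reduction to star-shaped pieces you say each $f_k$ is ``$\chi_k f$ corrected by a constant so that $\int_{U_k\cap E}f_k\,dx=0$.'' If you subtract an independent correction on each piece, the corrected pieces no longer sum to $f$; the conditions $\sum_k f_k=f$, $\supp f_k\subset U_k\cap E$, $\int f_k=0$, and $\|f_k\|_{L^r}\leq C\|f\|_{L^r}$ cannot all be met this way unless the corrections cancel, which generally requires the standard chain (telescoping) construction: order the subdomains so consecutive ones overlap in open sets, and transfer the excess mass $\int\chi_k f\,dx$ along the chain using fixed bump functions supported in the overlaps, with the global condition $\int_E f\,dx=0$ absorbing the final residue (this is Galdi's Lemma III.3.2). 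This is a known, standard repair rather than a missing idea, so your proof is sound once that mechanism is spelled out.
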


Actually, when $E$ is an annular region, we can explicitly calculate the constant in the above lemma.
\begin{Lem}\label{Lem2.2}$($See \cite[Lemma 3]{Tsai21}$)$ Let $R>0,1<k<\infty$ and $E=B_{kR}\setminus \overline{B_R}$ be an annulus in $\mathbb{R}^{3}$. Then the constant $C_{\mathrm{Bog}}(E,r)$ from Lemma \ref{Lem2.1} can be chosen as $C_{\mathrm{Bog}}(E,r)=\frac{C_r}{(k-1)k^{1-\frac{1}{r}}}$, where $C_r$ is independent of $k$ and $R$.
\end{Lem}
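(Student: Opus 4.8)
The plan is to obtain the explicit $k$-dependence by transferring the Bogovskii estimate on a single reference annulus of fixed ratio to the annulus $E$ of arbitrary ratio $k$, and to control the transfer constant by the geometry of the radial stretching. First I would invoke the rescaling identity of Lemma \ref{Lem2.1}: since $C_{\mathrm{Bog}}(RE,r)=C_{\mathrm{Bog}}(E,r)$ and $\frac1R E=B_k\setminus\overline{B_1}=:A_k$, the constant is independent of $R$, so it suffices to bound $C_{\mathrm{Bog}}(A_k,r)$ by $\frac{C_r}{(k-1)k^{1-1/r}}$ as a function of $k$ and $r$ alone. Applying Lemma \ref{Lem2.1} to the single fixed annulus $\hat A=B_2\setminus\overline{B_1}$ yields a constant $C_r:=C_{\mathrm{Bog}}(\hat A,r)$ depending only on $r$, which serves as the reference for the whole argument.

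Next I would transfer this reference estimate to $A_k$ through a radial diffeomorphism $\Phi:\hat A\to A_k$ of the form $\Phi(\xi)=\rho(|\xi|)\frac{\xi}{|\xi|}$, with $\rho:[1,2]\to[1,k]$ increasing, $\rho(1)=1$, $\rho(2)=k$ (e.g.\ the affine choice $\rho(s)=1+(k-1)(s-1)$). In spherical coordinates $D\Phi$ has radial eigenvalue $\rho'$ and tangential eigenvalues $\rho/s$, so $J:=\det D\Phi=\rho'(\rho/s)^2$. Given $f\in L^r_0(A_k)$, set $\hat f:=J\,(f\circ\Phi)$, which lies in $L^r_0(\hat A)$ because $\int_{\hat A}\hat f\,d\xi=\int_{A_k}f\,dx=0$; solve $\mathrm{div}\,\hat w=\hat f$ on $\hat A$ with $\hat w=\mathrm{Bog}\,\hat f$ and $\|\nabla\hat w\|_{L^r(\hat A)}\le C_r\|\hat f\|_{L^r(\hat A)}$; and recover $w$ on $A_k$ via the contravariant Piola transform $w(\Phi(\xi))=J(\xi)^{-1}D\Phi(\xi)\,\hat w(\xi)$, which preserves both the divergence structure (so $\mathrm{div}\,w=f$) and the vanishing trace (so $w\in W^{1,r}_0(A_k)$). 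Changing variables in the two relevant integrals gives $\|\hat f\|_{L^r(\hat A)}\le(\sup_{\hat A}J)^{1-1/r}\|f\|_{L^r(A_k)}$, while the chain rule for $\nabla w$ yields $\|\nabla w\|_{L^r(A_k)}\lesssim M\,\|\nabla\hat w\|_{L^r(\hat A)}$ plus lower-order terms, where $M:=\sup_{\hat A}\big(\|D\Phi\|\,\|D\Phi^{-1}\|\,J^{-(1-1/r)}\big)$. Combining these bounds produces $C_{\mathrm{Bog}}(A_k,r)\lesssim C_r\,(\sup_{\hat A}J)^{1-1/r}\,M$.

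The main obstacle is the sharp tracking of these anisotropic Jacobian factors, since $\hat A$ and $A_k$ share the inner radius but have different ratios, forcing $\Phi$ to stretch nonuniformly (of order $1$ near $|\xi|=1$ and of order $k$ near $|\xi|=2$). The essential point is that in the thin-shell regime $k\to1^+$ the radial eigenvalue $\rho'\sim k-1$ degenerates while the tangential eigenvalues stay $\sim1$, and this single degeneracy is exactly what forces the singular factor $(k-1)^{-1}$; refining the crude $L^\infty$ bound on the Jacobian to the sharp weight $k^{-(1-1/r)}$ requires pairing the Jacobian against $|f|^r$ rather than pulling out its supremum. The remaining points are routine once the weighted estimates are in place: the lower-order Piola terms $\nabla(J^{-1}D\Phi)\,\hat w$ are absorbed by controlling $\hat w$ through the Poincar\'e inequality on the \emph{fixed} domain $\hat A$, and the trace and divergence identities for $w$ follow from the standard properties of the Piola transform. (An alternative route that displays both factors more transparently is to split $f$ into its spherical average and its mean-free part on each sphere $\{|x|=s\}$, solving the former by an explicit radial ODE and the latter by a tangential Hodge construction; there the Dirichlet condition across the thin shell generates $(k-1)^{-1}$, while the growth of the tangential Poincar\'e constant with the sphere radius generates the power of $k$.)
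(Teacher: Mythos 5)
You should first note that the paper contains no proof of this lemma at all: it is quoted from \cite[Lemma 3]{Tsai21}, so your argument has to stand on its own rather than be matched against an in-paper argument. Your machinery is sound as far as it goes: the scaling reduction to $R=1$, the radial stretching $\Phi$ from the reference annulus $\hat A=B_2\setminus\overline{B_1}$, the Piola transform (which does preserve both the divergence identity and the zero trace), and the absorption of the lower-order terms by Poincar\'e on the fixed domain $\hat A$ are all correct. Moreover, in the thin regime $1<k\le 2$ your bookkeeping actually closes without any refinement: there $J\asymp k-1$ uniformly on $\hat A$, $\|D\Phi\|\,\|D\Phi^{-1}\|\asymp (k-1)^{-1}$, so your two factors $(\sup_{\hat A}J)^{1-\frac1r}$ and $\sup_{\hat A}\bigl(J^{\frac1r-1}\|D\Phi\|\,\|D\Phi^{-1}\|\bigr)$ multiply to $C_r(k-1)^{-1}$, and the lower-order Piola term contributes the same order; since $k^{1-\frac1r}\asymp 1$ for bounded $k$, this is exactly the stated constant in that range. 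That range covers every use of the lemma in this paper ($k=2/\sqrt{3}$ in Section \ref{sec3}, $k=4/3$ in Section \ref{sec4}).

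The genuine gap is precisely the step you defer as the ``main obstacle'': for unrestricted $k$ you only assert that a weighted refinement (``pairing the Jacobian against $|f|^r$'') will produce the decaying factor $k^{-(1-\frac1r)}$, and no such estimate is carried out. In fact it cannot be, because the target constant is not attainable for large $k$: as $k\to+\infty$ one has $\frac{C_r}{(k-1)k^{1-1/r}}\to 0$, whereas the best constant on $A_k=B_k\setminus\overline{B_1}$ is bounded below by a positive constant independent of $k$. Indeed, take $f=\phi_+-\phi_-$ with nonnegative unit-mass bumps supported in $B((\pm4,0,0),1)\subset A_k$ for $k>6$, so that $\|f\|_{L^r(A_k)}\asymp 1$ and $\int_{A_k}f\,dx=0$; for any $w\in W_0^{1,r}(A_k)$ with $\mathrm{div}\,w=f$, testing with a cutoff $\psi$ equal to $1$ on $B((4,0,0),1)$ and supported in $B((4,0,0),2)$ gives $1=\int_{A_k}f\psi\,dx=-\int_{A_k}w\cdot\nabla\psi\,dx\le C\|w\|_{L^r(B((4,0,0),2))}$, and integrating $\partial_\rho w$ along radial segments from the inner sphere $\{|x|=1\}$ (where $w$ has zero trace) bounds this by $C\|\nabla w\|_{L^r(A_k)}$ with an absolute constant. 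Hence $C_{\mathrm{Bog}}(A_k,r)\ge c_r>0$ for all large $k$, so no choice of $\rho$, and no weighted Jacobian estimate, can deliver the displayed $k$-dependence globally; the formula is only meaningful as an upper bound on a bounded range of $k$ (the placement of the power of $k$ should be checked against \cite{Tsai21}). The same verdict applies to your parenthetical alternative via spherical averaging and a tangential Hodge construction. In short: your construction, fully executed, proves the lemma in the only regime where it is both true and actually used in this paper, but the statement for all $1<k<\infty$ cannot be rescued by the refinement you postpone, and an honest write-up should either restrict $k$ to a bounded interval or correct the form of the constant.
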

Next, we show the following standard iteration lemma, which is a generalization and a direct consequence of \cite[Lemma 3.1]{Giaquinta}, and can be found in \cite[Lemma 2.1]{CL24} or \cite[Lemma 2.2]{YX20}.
\begin{Lem}\label{Lem2.3}
Let $f(t)$ be a non-negative bounded function on $\left[r_0, r_1\right] \subset \mathbb{R}^{+}$. If there are non-negative constants $A_i, B_i,\alpha_i$, $i=1,2,\cdots,m$, and a parameter $\theta \in[0,1)$ such that for any $r_0 \leq s<t \leq r_1$, it holds that
$$
f(s) \leq \theta f(t)+\sum_{i=1}^m\left(\frac{A_i}{(t-s)^{\alpha_i}}+B_i\right),
$$
then
$$
f(s) \leq C\sum_{i=1}^m\left(\frac{A_i}{(t-s)^{\alpha_i}}+B_i\right),
$$
where $C$ is a constant depending on $\alpha_1,\alpha_2,\cdots,\alpha_m$ and $\theta$.
\end{Lem}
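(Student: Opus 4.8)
The plan is to run a standard \emph{hole-filling} iteration of the hypothesis on a geometric sequence of nested subintervals: the point is to absorb the term $\theta f(t)$ across infinitely many scales while keeping the $A_i$- and $B_i$-contributions summable, so that the bad self-referential term disappears in the limit. Throughout I fix an arbitrary pair $r_0 \le s < t \le r_1$ and write $d = t - s$ for the gap I want to exploit; the target constant $C$ must not depend on this pair.

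First I would introduce a parameter $\tau \in (0,1)$, to be fixed later, and define the increasing sequence $t_0 = s$, $t_{k+1} = t_k + (1-\tau)\tau^k d$, so that $t_k = s + (1-\tau^k)d \uparrow t$ and $t_{k+1} - t_k = (1-\tau)\tau^k d$. Applying the assumed inequality to the pair $(t_k, t_{k+1})$ in place of $(s,t)$ gives
\[
f(t_k) \le \theta f(t_{k+1}) + \sum_{i=1}^m\left(\frac{A_i}{[(1-\tau)\tau^k d]^{\alpha_i}} + B_i\right).
\]
Iterating this bound $n$ times from $k=0$ and collecting the factor $\theta^k$ that accumulates on each level, then interchanging the finite sums, yields
\[
f(s) \le \theta^n f(t_n) + \sum_{i=1}^m\left(\frac{A_i}{[(1-\tau)d]^{\alpha_i}}\sum_{k=0}^{n-1}\theta^k\tau^{-k\alpha_i} + B_i\sum_{k=0}^{n-1}\theta^k\right).
\]

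The crucial, and essentially the only delicate, step is the choice of $\tau$ so that every geometric series $\sum_k (\theta\tau^{-\alpha_i})^k$ converges. Since there are only finitely many exponents, I set $\alpha_* = \max_i \alpha_i$ and pick any $\tau \in (0,1)$ with $\theta\tau^{-\alpha_*} < 1$; such $\tau$ exist precisely because $\theta \in [0,1)$ forces $\theta\tau^{-\alpha_*} \to \theta < 1$ as $\tau \to 1^-$ (this is the one place $\theta < 1$ is used). As $\tau < 1$ makes $\tau^{-x}$ increasing in $x$, one then has $\theta\tau^{-\alpha_i} \le \theta\tau^{-\alpha_*} < 1$ for every $i$. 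Letting $n \to \infty$ kills the leading term, since $f$ is bounded and $\theta^n \to 0$, while each geometric sum converges to $(1-\theta\tau^{-\alpha_i})^{-1}$ or $(1-\theta)^{-1}$.

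Passing to the limit therefore leaves
\[
f(s) \le \sum_{i=1}^m\left(\frac{1}{(1-\tau)^{\alpha_i}(1-\theta\tau^{-\alpha_i})}\cdot\frac{A_i}{d^{\alpha_i}} + \frac{B_i}{1-\theta}\right),
\]
and taking $C$ to be the largest of the finitely many constants $(1-\tau)^{-\alpha_i}(1-\theta\tau^{-\alpha_i})^{-1}$ and $(1-\theta)^{-1}$ gives the asserted estimate with $d^{\alpha_i} = (t-s)^{\alpha_i}$ and $C$ depending only on $\alpha_1,\dots,\alpha_m$ and $\theta$ (through the admissible $\tau$), uniformly in $s,t$. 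I expect no obstacle beyond this bookkeeping: the whole argument hinges on the single achievable inequality $\theta\tau^{-\alpha_*} < 1$, and the degenerate case $\alpha_* = 0$ (all $A_i$-terms constant) is covered automatically since then $\theta\tau^{-\alpha_*} = \theta$.
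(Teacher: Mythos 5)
Your proof is correct and coincides with the classical hole-filling iteration of Giaquinta's Lemma 3.1, which the paper cites in place of a written proof: the geometric subdivision $t_k=s+(1-\tau^k)(t-s)$, the choice of $\tau\in(0,1)$ with $\theta\tau^{-\alpha_*}<1$ (possible precisely because $\theta<1$), and the use of boundedness of $f$ to discard $\theta^n f(t_n)$ are exactly the standard steps. The resulting constant depends only on $\theta$ and $\alpha_1,\dots,\alpha_m$, as the lemma requires, so there is nothing to fix.
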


In order to establish the Liouville type results, we need to estimate several integrals involving $u$ and $B$.
Denote
$$
\aligned
&J_1=\frac{1}{(t-s)^2}\int_{B_t \backslash B_s}(|u|^2+|B|^2) d x,\;J_2=\frac{1}{t-s} \int_{B_t \backslash B_{\frac{3R}{2}}}|u|^3dx,\\
&J_3=\frac{1}{t-s}\|B\|_{L^{2p'}(B_t\backslash B_\frac{3R}{2})}^{2}\|u\|_{L^p(B_t\backslash B_s)},\;J_4=\frac{1}{(t-s)^2}\int_{B_t\backslash B_s}|B|^4dx.
\endaligned
$$
The estimates on $J_1$, $J_2$, $J_3$, $J_4$ are given in the next four lemmas.

\begin{Lem}
Let $\sqrt{3}R\leq s<t\leq 2R$ and $p,q\geq1$. Suppose that $u,B$ are smooth vector-valued functions. Then
\begin{itemize}
\item[(i)] For any $\varepsilon,\delta>0$, there exist positive constants $C_\varepsilon$ and $C_\delta$ such that
\begin{equation}\label{ine2.1}
\aligned
J_1\leq&\varepsilon\|u\|_{L^{6}(B_t\backslash B_s)}^{2}+\frac{C_\varepsilon}{(t-s)^{\frac{6}{p}-1}}\|u\|_{L^{p}(A_R)}^{2}+\frac{C}{(t-s)^2}R^{3-\frac{6}{p}}\|u\|_{L^p (A_R)}^2+\\
&\delta\|B\|_{L^{6}(B_t\backslash B_s)}^{2}+\frac{C_\delta}{(t-s)^{\frac{6}{q}-1}}\|B\|_{L^{q}(A_R)}^{2}+\frac{C}{(t-s)^2}R^{3-\frac{6}{q}}\|B\|_{L^{q}(A_R)}^{2}.
\endaligned
\end{equation}
\item[(ii)] It holds that
\begin{equation}\label{ine2.2}
\aligned
J_1\leq\frac{CR^2}{(t-s)^2}\left(\|u\|_{L^6(A_R)}^2 +\|B\|_{L^6(A_R)}^2\right).
\endaligned
\end{equation}
\end{itemize}
\end{Lem}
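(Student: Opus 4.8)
The plan is to bound the $u$-contribution and the $B$-contribution to $J_1$ separately, since the two are structurally identical with $(p,\varepsilon)$ replaced by $(q,\delta)$. First I would record the one geometric fact that drives everything: because $\sqrt3>\frac32$, the hypothesis $\sqrt3\,R\le s<t\le 2R$ forces $\overline{B_{3R/2}}\subset B_s$ and $B_t\subset B_{2R}$, so the shell satisfies $B_t\backslash B_s\subset A_R=B_{2R}\backslash\overline{B_{3R/2}}$. Hence any norm of $u$ or $B$ over $B_t\backslash B_s$ is dominated by the corresponding norm over $A_R$, which is what allows the right-hand sides of \eqref{ine2.1} and \eqref{ine2.2} to be written with $\|\cdot\|_{L^p(A_R)}$ and $\|\cdot\|_{L^q(A_R)}$. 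I would also note the crude volume bound $|B_t\backslash B_s|\le|B_{2R}|\le CR^3$.

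For part (i) I would split according to whether the exponent is $\le2$ or $\ge2$, treating the $u$-term (the $B$-term being the same). When $1\le p\le2$, the value $2$ lies between $p$ and $6$, so interpolation gives $\|u\|_{L^2(B_t\backslash B_s)}\le\|u\|_{L^p(B_t\backslash B_s)}^{1-\theta}\|u\|_{L^6(B_t\backslash B_s)}^{\theta}$ with $\theta=\frac{3(2-p)}{6-p}$ and $1-\theta=\frac{2p}{6-p}$. Squaring, dividing by $(t-s)^2$, and applying the weighted Young inequality $X^{1-\theta}Y^{\theta}\le C_\varepsilon X+\varepsilon Y$ with $Y=\|u\|_{L^6}^2$ and $X=(t-s)^{-2/(1-\theta)}\|u\|_{L^p}^2$ peels off $\varepsilon\|u\|_{L^6(B_t\backslash B_s)}^2$ and leaves $C_\varepsilon(t-s)^{-2/(1-\theta)}\|u\|_{L^p}^2$; the key computation is $\frac{2}{1-\theta}=\frac{6-p}{p}=\frac6p-1$, which reproduces exactly the second $u$-term of \eqref{ine2.1}. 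Passing to the domain $A_R$ and discarding the nonnegative third term gives the claim in this range. When instead $p\ge2$, interpolation of $L^2$ between $L^p$ and $L^6$ is unavailable (both exponents exceed $2$), so I would use Hölder directly: $\|u\|_{L^2(B_t\backslash B_s)}^2\le|B_t\backslash B_s|^{1-2/p}\|u\|_{L^p(B_t\backslash B_s)}^2\le CR^{3(1-2/p)}\|u\|_{L^p(A_R)}^2=CR^{3-6/p}\|u\|_{L^p(A_R)}^2$, and dividing by $(t-s)^2$ yields precisely the third $u$-term, the first two being harmless since nonnegative. Because the right-hand side of \eqref{ine2.1} already contains all three $u$-terms and all three $B$-terms, the single stated inequality holds for every $p,q\ge1$.

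Part (ii) then follows from a single application of Hölder with exponents $3$ and $\frac32$: $\int_{B_t\backslash B_s}|u|^2\le\|u\|_{L^6(B_t\backslash B_s)}^2|B_t\backslash B_s|^{2/3}\le CR^2\|u\|_{L^6(A_R)}^2$, and likewise for $B$, after which division by $(t-s)^2$ gives \eqref{ine2.2}. There is no genuine obstacle here; the computations are routine interpolation and Hölder estimates. The only points needing care are the bookkeeping of the exponent $\frac{2}{1-\theta}=\frac6p-1$ in the Young step, and the deliberate use of the crude bound $|B_t\backslash B_s|\le CR^3$ rather than the sharper $\sim R^2(t-s)$, since it is this choice that produces the clean factor $R^{3-6/p}$ instead of a mixed power of $R$ and $t-s$. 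The main structural decision is simply the case split at exponent $2$, forced by the fact that $L^2$ can be interpolated between $L^p$ and $L^6$ only when $p\le2$.
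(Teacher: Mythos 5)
Your proof is correct and takes essentially the same route as the paper: the same decomposition of $J_1$ into its $u$- and $B$-parts with the case split at exponent $2$, the same interpolation $\|u\|_{L^2}\leq\|u\|_{L^p}^{\frac{2p}{6-p}}\|u\|_{L^6}^{\frac{6-3p}{6-p}}$ followed by Young's inequality (your bookkeeping $\frac{2}{1-\theta}=\frac{6}{p}-1$ matches the paper's exponent exactly), the same H\"older bound $\frac{C}{(t-s)^2}R^{3-\frac{6}{p}}\|u\|_{L^p(A_R)}^2$ for $p\geq2$, and the same H\"older argument with $|B_t\backslash B_s|^{2/3}\leq CR^2$ for part (ii). Your explicit observation that $\sqrt{3}R\leq s$ forces $B_t\backslash B_s\subset A_R$ simply makes precise an inclusion the paper uses implicitly when passing from norms over $B_t\backslash B_s$ to norms over $A_R$.
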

\begin{proof}
Denote
\begin{equation*}
J_{11}=\frac{1}{(t-s)^2}\int_{B_t\backslash B_s}|u|^2 dx,\;J_{12}=\frac{1}{(t-s)^2}\int_{B_t\backslash B_s}|B|^2dx,
\end{equation*}
then $J_1=J_{11}+J_{12}$.
If $1\leq p<2$, using the interpolation inequality
$$
\|u\|_{L^2}\leq\|u\|_{L^p}^{\frac{2p}{6-p}}\|u\|_{L^6}^{\frac{6-3p}{6-p}},
$$
and the Young inequality, we have
\begin{equation}\label{ine2.3}
\aligned
J_{11}&\leq\frac{1}{(t-s)^{2}}\|u\|_{L^p(B_t\backslash B_s)}^{\frac{4p}{6-p}}\|u\|_{L^6(B_t\backslash B_s)}^{\frac{12-6p}{6-p}}\\
&\leq\varepsilon\|u\|_{L^{6}(B_t\backslash B_s)}^{2}+\frac{C_\varepsilon}{(t-s)^{\frac{6}{p}-1}}\|u\|_{L^{p}(B_t\backslash B_s)}^{2}\\
&\leq\varepsilon\|u\|_{L^{6}(B_t\backslash B_s)}^{2}+\frac{C_\varepsilon}{(t-s)^{\frac{6}{p}-1}}\|u\|_{L^{p}(A_R)}^{2}.
\endaligned
\end{equation}
If $p\geq2$, using the H\"{o}lder inequality, we obtain
\begin{equation}\label{ine2.4}
J_{11}\leq\frac{C}{(t-s)^2}R^{3-\frac{6}{p}}\|u\|_{L^p (A_R)}^2.
\end{equation}
Combining \eqref{ine2.3} and \eqref{ine2.4}, we conclude that no matter whether $p$ is larger than $2$ or not,  $J_{11}$ can be always controlled by
\begin{equation}\label{ine2.5}
J_{11}\leq\varepsilon\|u\|_{L^{6}(B_t\backslash B_s)}^{2}+\frac{C_\varepsilon}{(t-s)^{\frac{6}{p}-1}}\|u\|_{L^{p}(A_R)}^{2}+\frac{C}{(t-s)^2}R^{3-\frac{6}{p}}\|u\|_{L^p (A_R)}^2.
\end{equation}
Similarly to the estimate of $J_{11}$, we can get
\begin{equation*}
\aligned
J_{12}\leq\delta\|B\|_{L^{6}(B_t\backslash B_s)}^{2}+\frac{C_\delta}{(t-s)^{\frac{6}{q}-1}}\|B\|_{L^{q}(A_R)}^{2}+\frac{C}{(t-s)^2}R^{3-\frac{6}{q}}\|B\|_{L^{q}(A_R)}^{2}.
\endaligned
\end{equation*}
Consequently, \eqref{ine2.1} holds.

Using the H\"{o}lder inequality, we obtain
\begin{align*}
J_1&=\frac{1}{(t-s)^2}\|u\|_{L^2(B_t\backslash B_s)}^2+\frac{1}{(t-s)^2}\|B\|_{L^2(B_t\backslash B_s)}^2\\
&\leq\frac{Ct^2}{(t-s)^2}\left(\|u\|_{L^6(B_t\backslash B_s)}^2 +\|B\|_{L^6(B_t\backslash B_s)}^2\right)\\
&\leq\frac{CR^2}{(t-s)^2}\left(\|u\|_{L^6(A_R)}^2 +\|B\|_{L^6(A_R)}^2\right).
\end{align*}

\end{proof}

\begin{Lem}
Let $\sqrt{3}R\leq s<t\leq 2R$. Suppose that $u$ is a smooth vector-valued function. Then we have the following conclusions:
\begin{itemize}
\item[(i)]  Assume $p\in\left[1,3\right)$. It holds that
\begin{equation}\label{ine2.6}
J_2\leq \frac{1}{t-s}\|u\|_{L^p(B_t\backslash B_{\frac{3R}{2}})}^{\frac{3p}{6-p}}\|u\|_{L^6(B_t\backslash B_{\frac{3R}{2}})}^{\frac{18-6p}{6-p}}.
\end{equation}
\item[(ii)] Assume $p\in\left(\frac{3}{2},3\right)$. For any $\varepsilon>0$, it holds that
\begin{equation}\label{ine2.7}
\aligned
J_2&\leq\varepsilon\|u\|_{L^6(B_t\backslash B_{\frac{3R}{2}})}^2+\frac{C_\varepsilon}{(t-s)^\frac{6-p}{2p-3}}\|u\|_{L^p(A_R)}^\frac{3p}{2p-3}.
\endaligned
\end{equation}
\item[(iii)]  Assume $p\geq3$. It holds that
\begin{equation}\label{ine2.8}
J_2\leq\frac{C}{t-s}R^{3-\frac{9}{p}}\|u\|_{L^{p}(A_R)}^{3}.
\end{equation}
\end{itemize}
\end{Lem}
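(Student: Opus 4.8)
The proof of all three parts rests entirely on interpolation between $L^p$ and $L^6$ together with elementary exponent bookkeeping; no delicate analysis is needed. For part (i), the plan is to apply the interpolation inequality
$$\|u\|_{L^3(B_t\backslash B_{\frac{3R}{2}})}\leq \|u\|_{L^p(B_t\backslash B_{\frac{3R}{2}})}^{\theta}\,\|u\|_{L^6(B_t\backslash B_{\frac{3R}{2}})}^{1-\theta},$$
where $\theta$ is fixed by the scaling relation $\frac13=\frac{\theta}{p}+\frac{1-\theta}{6}$, which yields $\theta=\frac{p}{6-p}$ and $1-\theta=\frac{6-2p}{6-p}$ (both legitimate since $1\le p<3<6$). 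Cubing this inequality and dividing by $t-s$ produces exactly \eqref{ine2.6}, with the powers $\frac{3p}{6-p}$ on $\|u\|_{L^p}$ and $\frac{18-6p}{6-p}$ on $\|u\|_{L^6}$ appearing automatically.

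For part (ii), I would start from the bound \eqref{ine2.6} just obtained and apply Young's inequality to peel off a small multiple of $\|u\|_{L^6(B_t\backslash B_{\frac{3R}{2}})}^2$. The key observation is that the $L^6$ exponent satisfies $\frac{18-6p}{6-p}<2$ precisely when $p>\frac32$, which is exactly the hypothesis in (ii); this guarantees that, writing the right-hand side of \eqref{ine2.6} as a product of $\|u\|_{L^6}^{(18-6p)/(6-p)}$ with $\frac{1}{t-s}\|u\|_{L^p}^{3p/(6-p)}$, Young's inequality can be applied with the $L^6$-factor raised to an exponent $r=\frac{6-p}{3(3-p)}>1$ so as to produce the quadratic term $\varepsilon\|u\|_{L^6}^2$. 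Computing the conjugate exponent gives $r'=\frac{6-p}{2p-3}$, which reproduces the power $\frac{6-p}{2p-3}$ on $(t-s)$ and the power $\frac{3p}{6-p}\cdot r'=\frac{3p}{2p-3}$ on $\|u\|_{L^p}$. Finally, since $t\le 2R$ forces $B_t\backslash B_{\frac{3R}{2}}\subseteq A_R$, I replace $\|u\|_{L^p(B_t\backslash B_{\frac{3R}{2}})}$ by the larger $\|u\|_{L^p(A_R)}$, arriving at \eqref{ine2.7}.

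Part (iii) is even simpler: with $p\ge3$ the exponent $\frac3p\le1$, so Hölder's inequality gives
$$\int_{B_t\backslash B_{\frac{3R}{2}}}|u|^3\,dx\leq \|u\|_{L^p(B_t\backslash B_{\frac{3R}{2}})}^{3}\,\bigl|B_t\backslash B_{\frac{3R}{2}}\bigr|^{1-\frac3p},$$
and bounding the measure of the annular region by $|B_{2R}|\le CR^3$ produces the factor $R^{3(1-\frac3p)}=R^{3-\frac9p}$; enlarging the domain to $A_R$ and dividing by $t-s$ then gives \eqref{ine2.8}.

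There is no genuine obstacle here; the only point demanding care is the exponent bookkeeping in part (ii), where one must verify both that the hypothesis $p>\frac32$ is exactly what makes the $L^6$-power strictly subquadratic (hence Young applicable) and that the resulting conjugate exponent reproduces precisely the stated powers of $(t-s)$ and $\|u\|_{L^p}$. The standing restriction $p<3$ throughout parts (i)--(ii) is what keeps the interpolation parameter $\theta\in(0,1)$ and the $L^6$-power positive.
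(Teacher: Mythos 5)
Your proposal is correct and matches the paper's proof essentially step for step: interpolation of $L^3$ between $L^p$ and $L^6$ for part (i), Young's inequality with conjugate exponents $\frac{6-p}{3(3-p)}$ and $\frac{6-p}{2p-3}$ (valid exactly when $p>\frac{3}{2}$) for part (ii), and H\"older's inequality with the measure bound $|B_t\backslash B_{\frac{3R}{2}}|\leq CR^3$ for part (iii). The exponent bookkeeping, including the powers $\frac{3p}{6-p}$, $\frac{18-6p}{6-p}$, $\frac{3p}{2p-3}$, and $R^{3-\frac{9}{p}}$, is verified correctly throughout.
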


\begin{proof}
If $1\leq p<3$, using the interpolation inequality
$$
\|u\|_{L^3 } \leq\|u\|_{L^p }^{\frac{p}{6-p}}\|u\|_{L^6}^{\frac{6-2 p}{6-p}},
$$
we obtain
\begin{equation*}
\aligned
J_2&\leq \frac{1}{t-s}\|u\|_{L^p(B_t\backslash B_{\frac{3R}{2}})}^{\frac{3p}{6-p}}\|u\|_{L^6(B_t\backslash B_{\frac{3R}{2}})}^{\frac{18-6p}{6-p}}.
\endaligned
\end{equation*}
Then by the Young inequality, we find
\begin{equation*}
\aligned
J_2&\leq\varepsilon\|u\|_{L^6(B_t\backslash B_{\frac{3R}{2}})}^2+\frac{C_\varepsilon}{(t-s)^\frac{6-p}{2p-3}}\|u\|_{L^p(A_R)}^\frac{3p}{2p-3},
\endaligned
\end{equation*}
here we require $p>\frac{3}{2}$.

If $p\geq3$, by the H\"{o}lder inequality, we deduce
\begin{equation*}
J_2\leq\frac{C}{t-s}t^{3-\frac{9}{p}}\|u\|_{L^{p}(B_t\backslash B_{\frac{3R}{2}})}^{3}\leq\frac{C}{t-s}R^{3-\frac{9}{p}}\|u\|_{L^{p}(A_R)}^{3}.
\end{equation*}
\end{proof}

\begin{Lem}
Let $\sqrt{3}R\leq s<t\leq 2R$. Suppose that $u,B$ are smooth vector-valued functions. Then
\begin{itemize}
\item[(i)] Let $\frac{3}{2}<p\leq\frac{9}{2}$, $1\leq q<2p'$. For any $\delta>0$, there exists a positive constant $C_\delta$ such that
\begin{equation}\label{ine2.9}
J_3\leq\delta\|B\|_{L^6(B_t\backslash B_{\frac{3R}{2}})}^2+\frac{C_\delta}{(t-s)^\frac{(6-q)p'}{(3-p')q}}\|u\|_{L^p(A_R)}^\frac{(6-q)p'}{(3-p')q}\|B\|_{L^q(A_R)}^2.
\end{equation}
\item[(ii)] Let $\frac{3}{2}<p\leq\frac{9}{2}$, $1\leq q<2p'$. It holds that
\begin{equation}\label{ine2.10}
\aligned
J_3&\leq\frac{1}{t-s}\|u\|_{L^p(B_t\backslash B_s)}\|B\|_{L^q(B_t\backslash B_{\frac{3R}{2}})}^\frac{2(3-p')q}{(6-q)p'}\|B\|_{L^6(B_t\backslash B_{\frac{3R}{2}})}^\frac{12p'-6q}{(6-q)p'}.
\endaligned
\end{equation}
\item[(iii)] Let $\frac{3}{2}<p\leq\frac{9}{2}$, $q\geq 2p'$. It holds that
\begin{equation}\label{ine2.11}
\aligned
J_3&\leq\frac{C}{t-s}R^{3-\frac{3}{p}-\frac{6}{q}}\|B\|_{L^q(A_R)}^{2}\|u\|_{L^p(A_R)}.
\endaligned
\end{equation}
\end{itemize}

\end{Lem}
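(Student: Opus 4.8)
The plan is to derive all three bounds from a single observation: in every case the only nontrivial factor of $J_3$ is $\|B\|_{L^{2p'}(B_t\backslash B_{\frac{3R}{2}})}^2$, and the hypothesis on $q$ merely decides whether this norm is controlled by H\"older's inequality (when $q\ge 2p'$) or by interpolation between $L^q$ and $L^6$ (when $q<2p'$). Before starting I would record the two elementary inclusions that legitimize passing to the annulus $A_R$: since $\sqrt3>\frac32$, the constraint $\sqrt3R\le s<t\le 2R$ yields $B_t\backslash B_s\subseteq B_{2R}\backslash B_{\frac{3R}{2}}=A_R$ and likewise $B_t\backslash B_{\frac{3R}{2}}\subseteq A_R$, so that $\|u\|_{L^p(B_t\backslash B_s)}\le\|u\|_{L^p(A_R)}$ and $\|B\|_{L^q(B_t\backslash B_{\frac{3R}{2}})}\le\|B\|_{L^q(A_R)}$. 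This is the only place the geometric restriction on $s,t$ is used.

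For part (iii), where $q\ge 2p'$, I would apply H\"older's inequality on $B_t\backslash B_{\frac{3R}{2}}$ to write
\[
\|B\|_{L^{2p'}(B_t\backslash B_{\frac{3R}{2}})}\le \|B\|_{L^q(B_t\backslash B_{\frac{3R}{2}})}\,\bigl|B_t\backslash B_{\tfrac{3R}{2}}\bigr|^{\frac{1}{2p'}-\frac{1}{q}}.
\]
Since $|B_t\backslash B_{\frac{3R}{2}}|\le CR^3$ (as $t\le 2R$), squaring produces the volume factor $R^{3\left(\frac{1}{p'}-\frac{2}{q}\right)}=R^{3-\frac{3}{p}-\frac{6}{q}}$, using $\frac{3}{p'}=3-\frac{3}{p}$; combining with the inclusions above gives \eqref{ine2.11}.

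For parts (ii) and (i) I would instead interpolate. When $\frac32<p\le\frac92$ one has $p'<3$, hence $2p'<6$, so under $q<2p'$ the exponent $2p'$ lies strictly between $q$ and $6$ and
\[
\|B\|_{L^{2p'}}\le\|B\|_{L^q}^{\theta}\,\|B\|_{L^6}^{1-\theta},\qquad \theta=\frac{(3-p')q}{(6-q)p'}\in(0,1).
\]
Squaring, and applying the inclusions to the $L^q$ and $L^6$ factors, reproduces exactly the exponents $\frac{2(3-p')q}{(6-q)p'}$ and $\frac{12p'-6q}{(6-q)p'}$ of \eqref{ine2.10}, which is (ii). To pass from (ii) to (i) I would apply Young's inequality to the factor $\|B\|_{L^6}^{b}$ with $b=\frac{12p'-6q}{(6-q)p'}$, absorbing it into $\delta\|B\|_{L^6}^2$. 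The admissibility of this step is precisely $b<2$, which a short computation shows is equivalent to $p'<3$, i.e. to the standing hypothesis $p>\frac32$. The conjugate exponent $\frac{2}{2-b}=\frac{(6-q)p'}{(3-p')q}$ then simultaneously fixes the power of $(t-s)^{-1}$ and of $\|u\|_{L^p}$, while the exponent of $\|B\|_{L^q}$ collapses to exactly $2$ (since $\frac{2(3-p')q}{(6-q)p'}\cdot\frac{(6-q)p'}{(3-p')q}=2$), giving \eqref{ine2.9}.

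The computations are otherwise routine: the only points demanding care are the two range conditions---$q<2p'<6$ for the interpolation in (ii) and $b<2$ for the Young step in (i)---both of which reduce to the single inequality $p>\frac32$, together with the bookkeeping needed to check that the exponents reproduce the stated fractions. I do not anticipate any genuine obstacle beyond this arithmetic.
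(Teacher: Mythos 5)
Your proof is correct and follows essentially the same route as the paper: for $q<2p'$ the paper likewise interpolates $\|B\|_{L^{2p'}}\leq\|B\|_{L^q}^{\frac{(3-p')q}{(6-q)p'}}\|B\|_{L^6}^{\frac{6p'-3q}{(6-q)p'}}$ (valid since $p>\frac{3}{2}$ gives $2p'\in(q,6)$) to obtain \eqref{ine2.10} and then applies Young's inequality with conjugate exponents $\frac{(6-q)p'}{(3-p')q}$ and its dual to get \eqref{ine2.9}, while for $q\geq 2p'$ it uses the same H\"older/volume bound $R^{\frac{3}{p'}-\frac{6}{q}}=R^{3-\frac{3}{p}-\frac{6}{q}}$ for \eqref{ine2.11}. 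Your explicit checks (the inclusions into $A_R$, and that the Young step requires exactly $\frac{12p'-6q}{(6-q)p'}<2$, i.e. $p>\frac{3}{2}$) are precisely the arithmetic left implicit in the paper.
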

\begin{proof}

If $\frac{3}{2}<p\leq\frac{9}{2}$, $1\leq q<2p'$, we have $2p'\in(q,6)$. Hence, the following interpolation inequality holds
\begin{equation*}
\|B\|_{L^{2p'}} \leq\|B\|_{L^q}^\frac{(3-p')q}{(6-q)p'}\|B\|_{L^6}^\frac{6p'-3q}{(6-q)p'},
\end{equation*}
which indicates
\begin{equation*}
\aligned
J_3&\leq\frac{1}{t-s}\|u\|_{L^p(B_t\backslash B_s)}\|B\|_{L^q(B_t\backslash B_{\frac{3R}{2}})}^\frac{2(3-p')q}{(6-q)p'}\|B\|_{L^6(B_t\backslash B_{\frac{3R}{2}})}^\frac{12p'-6q}{(6-q)p'}.
\endaligned
\end{equation*}
Using the Young inequality, we get
$$
J_3\leq\delta\|B\|_{L^6(B_t\backslash B_{\frac{3R}{2}})}^2+\frac{C_\delta}{(t-s)^\frac{(6-q)p'}{(3-p')q}}\|u\|_{L^p(B_t\backslash B_s)}^\frac{(6-q)p'}{(3-p')q}\|B\|_{L^q(B_t\backslash B_{\frac{3R}{2}})}^2.
$$

If $\frac{3}{2}<p\leq\frac{9}{2}$, $q\geq 2p'$,
applying the H\"{o}lder inequality to $J_3$, we obtain
\begin{equation*}
\aligned
J_3&\leq\frac{C}{t-s}R^{\frac{3}{p'}-\frac{6}{q}}\|B\|_{L^q(B_t\backslash B_{\frac{3R}{2}})}^{2}\|u\|_{L^p(B_t\backslash B_s)}\\
&=\frac{C}{t-s}R^{3-\frac{3}{p}-\frac{6}{q}}\|B\|_{L^q(B_t\backslash B_{\frac{3R}{2}})}^{2}\|u\|_{L^p(B_t\backslash B_s)}.
\endaligned
\end{equation*}

\end{proof}

\begin{Lem}\label{Lem2.7}
Let $\sqrt{3}R\leq s<t\leq 2R$ and $3<q\leq6$. Suppose that $B$ is a smooth vector-valued function. Define $h(q)=1$ for $3<q<4$ and $h(q)=0$ for $4\leq q\leq 6$. Then
\begin{itemize}
\item[(i)] For any $\delta>0$, there exist positive constants $C_{\delta}$ and $C$ such that
\begin{equation}\label{ine2.12}
\aligned
J_4\leq&\delta\|B\|_{L^6(B_t\backslash B_s)}^2+\frac{C_\delta h(q)}{(t-s)^\frac{6-q}{q-3}}\|B\|_{L^q(A_R)}^\frac{q}{q-3}+\frac{C}{(t-s)^2}R^{3-\frac{12}{q}}\|B\|_{L^q(A_R)}^4.
\endaligned
\end{equation}
\item[(ii)] It holds that
\begin{equation}\label{ine2.13}
J_4\leq \frac{CR}{(t-s)^2}\|B\|_{L^6(A_R)}^4.
\end{equation}
\end{itemize}
\end{Lem}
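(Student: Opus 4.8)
The plan is to estimate $J_4=\frac{1}{(t-s)^2}\|B\|_{L^4(B_t\backslash B_s)}^4$ by controlling the $L^4$ norm of $B$ on the shell $B_t\backslash B_s$ in terms of its $L^q$ and $L^6$ norms, and then to clear the $L^6$ norm either by Young's inequality or by raising the power directly. The split recorded by the function $h(q)$ reflects the two genuinely different regimes: when $3<q<4$ the exponent $4$ lies strictly between $q$ and $6$, so I must interpolate between $L^q$ and $L^6$; when $4\le q\le 6$ the inclusion $L^q\hookrightarrow L^4$ on the bounded shell is available by H\"older, and no $L^6$ norm is needed.

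For part (i) in the range $3<q<4$, I first apply the interpolation inequality $\|B\|_{L^4}\le\|B\|_{L^q}^{\theta}\|B\|_{L^6}^{1-\theta}$ with $\theta=\frac{q}{2(6-q)}$, determined by $\frac14=\frac{\theta}{q}+\frac{1-\theta}{6}$. Raising to the fourth power gives $J_4\le\frac{1}{(t-s)^2}\|B\|_{L^q}^{\frac{2q}{6-q}}\|B\|_{L^6}^{\frac{6(4-q)}{6-q}}$. The key arithmetic check is that $\frac{6(4-q)}{6-q}<2$ precisely when $q>3$, so the $L^6$ factor carries a power strictly below $2$ and Young's inequality can extract $\delta\|B\|_{L^6}^2$. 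Choosing the Young exponents $r'=\frac{6-q}{3(4-q)}$ and $r=\frac{6-q}{2(q-3)}$ turns the remaining factor into $\frac{C_\delta}{(t-s)^{\frac{6-q}{q-3}}}\|B\|_{L^q}^{\frac{q}{q-3}}$, which are exactly the exponents appearing in \eqref{ine2.12}. Since $h(q)=1$ here and all terms are nonnegative, I may freely append the third term to obtain the stated bound.

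For part (i) in the range $4\le q\le 6$, I instead use H\"older's inequality directly: $\|B\|_{L^4(B_t\backslash B_s)}\le|B_t\backslash B_s|^{\frac14-\frac1q}\|B\|_{L^q(B_t\backslash B_s)}$. Since $t\le 2R$ gives $|B_t\backslash B_s|\le CR^3$ and the exponent $1-\frac4q\ge0$, raising to the fourth power yields $J_4\le\frac{C}{(t-s)^2}R^{3-\frac{12}{q}}\|B\|_{L^q}^4$; here $h(q)=0$, so the middle term drops out and the bound coincides with \eqref{ine2.12}. For part (ii) the same H\"older step with exponent pair $(4,6)$ gives $\|B\|_{L^4}^4\le|B_t\backslash B_s|^{1/3}\|B\|_{L^6}^4\le CR\|B\|_{L^6}^4$, whence \eqref{ine2.13}. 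In every case I pass from $B_t\backslash B_s$ to $A_R$ using that $s\ge\sqrt3R>\frac{3R}{2}$ forces $B_t\backslash B_s\subseteq A_R$, so the norms on the shell are dominated by those on $A_R$.

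The computations are elementary; the only point requiring care is the bookkeeping of the interpolation and Young exponents, together with the observation that the borderline inequality $\frac{6(4-q)}{6-q}<2$ is exactly the condition $q>3$ built into the hypothesis. This is what makes the interpolation argument succeed on the whole open interval $(3,4)$ and explains why $q=3$ must be excluded.
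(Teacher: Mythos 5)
Your proof is correct and follows essentially the same route as the paper's: interpolation of the $L^4$ norm between $L^q$ and $L^6$ with $\theta=\frac{q}{2(6-q)}$ followed by Young's inequality for $3<q<4$ (yielding exactly the paper's exponents $\frac{2q}{6-q}$ and $\frac{24-6q}{6-q}=\frac{6(4-q)}{6-q}$, and the bound $\frac{C_\delta}{(t-s)^{\frac{6-q}{q-3}}}\|B\|_{L^q(A_R)}^{\frac{q}{q-3}}$), and a direct H\"older estimate on the shell for $4\le q\le 6$, with part (ii) obtained as the case $q=6$. The only cosmetic differences are that you spell out the Young conjugate exponents and the containment $B_t\backslash B_s\subseteq A_R$ explicitly, both of which the paper leaves implicit.
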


\begin{proof}
For $3<q<4$, substituting the interpolation inequality
$$
\|B\|_{L^4}\leq\|B\|_{L^q}^\frac{q}{12-2q} \|B\|_{L^6}^\frac{12-3q}{12-2q}
$$
into $J_4$ and using the Young inequality, we have
\begin{equation}\label{ine2.14}
\aligned
J_4&\leq\frac{1}{(t-s)^2}\|B\|_{L^q(B_t \backslash B_s)}^\frac{2q}{6-q} \|B\|_{L^6(B_t \backslash B_s)}^\frac{24-6q}{6-q}\\
&\leq\delta\|B\|_{L^6(B_t\backslash B_s)}^2+\frac{C_{\delta}}{(t-s)^\frac{6-q}{q-3}}\|B\|_{L^q(A_R)}^\frac{q}{q-3}.
\endaligned
\end{equation}
Here we mention that we require $q>3$ in the last step of the above inequality.

For $4\leq q\leq6$, using the H\"{o}lder inequality, we obtain
\begin{equation}\label{ine2.15}
J_4\leq\frac{C}{(t-s)^2}R^{3-\frac{12}{q}}\|B\|_{L^q(B_t \backslash B_s)}^4.
\end{equation}
Taking \eqref{ine2.14} and \eqref{ine2.15} into account together, we conclude that \eqref{ine2.12} holds.
Setting $q=6$ in the above inequality, we find
$$
J_4\leq \frac{CR}{(t-s)^2}\|B\|_{L^6(B_t \backslash B_s)}^4.
$$
\end{proof}

\section{Proof of conventional Liouville type theorems}\label{sec3}

In order to handle the MHD equations and Hall-MHD equations together, we introduce the following equations involving a parameter $\kappa$:
\begin{equation}\label{equ3.1}
  \left\{
    \begin{array}{ll}
     -\Delta u+(u\cdot\nabla) u+\nabla \pi=(B\cdot\nabla)B,  \\
   -\Delta B+\kappa\mathrm{curl} (\mathrm{curl} B \times B)+(u\cdot\nabla) B-(B\cdot\nabla)u=0, \\
    \mathrm{div}u=\mathrm{div}B=0.
\end{array}
  \right.
\end{equation}
When $\kappa=0$, $\eqref{equ3.1}$ reduces to the MHD equations. When $\kappa=1$, $\eqref{equ3.1}$ is the Hall-MHD equations.

Let $\sqrt{3}R\leq s<t\leq 2R$, then we infer $s\geq\frac{\sqrt{3}t}{2}>\frac{3R}{2}$. We introduce a cut-off function $\eta \in C_0^{\infty}\left(\mathbb{R}^{3}\right)$ satisfying
\begin{equation*}
\eta(x)= \begin{cases}1, & |x| <s, \\ 0, & |x| >\frac{s+t}{2},\end{cases}
\end{equation*}
with
$$\text{$0\leq\eta (x)\leq 1$, and $\|\nabla \eta\|_{L^{\infty}} \leq \frac{C}{t-s}$, $\|\nabla^2\eta\|_{L^{\infty}} \leq \frac{C}{(t-s)^2}$.}$$
Since
$$\int_{B_t\backslash B_\frac{\sqrt{3}t}{2}}u\cdot\nabla\eta^{2}dx=\int_{B_t}u\cdot\nabla\eta^{2}dx=\int_{B_t}\nabla\cdot (u\eta^{2})dx=\int_{\partial B_t}\frac{x}{|x|}\cdot (u\eta^{2})dS=0,$$
by Lemma \ref{Lem2.1} and Lemma \ref{Lem2.2}, there exists $w\in W_{0}^{1,r}(B_t\backslash \overline{B_\frac{\sqrt{3}t}{2}})$ such that $w$ satisfies the following equation
\begin{equation*}
\mathrm{div} w=u\cdot\nabla\eta^{2} \text{ in }B_t\backslash \overline{B_\frac{\sqrt{3}t}{2}},
\end{equation*}
with the estimate
\begin{equation}\label{ine3.2}
\aligned
\|\nabla w\|_{L^{r}(B_t\backslash B_\frac{\sqrt{3}t}{2})}\leq C\|u\cdot\nabla\eta^2\|_{L^{r}(B_t\backslash B_\frac{\sqrt{3}t}{2})}\leq\frac{C}{t-s}\|u\|_{L^{r}(B_t\backslash B_s)},
\endaligned
\end{equation}
for any $1<r<+\infty$. We extend $w$ by zero to $B_\frac{\sqrt{3}t}{2}$, then $w\in W_{0}^{1,r}(B_t).$

Multiply both sides of $\eqref{equ3.1}_{1}$ and $\eqref{equ3.1}_{2}$ by $u \eta^2-w$ and $B \eta^2$ respectively, integrate over $B_t$ and apply integration by parts. This procedure yields
\begin{align}\label{ine3.3}
&\int_{B_t}\left(|\nabla u|^{2}\eta^2+|\nabla B|^{2}\eta^2\right) d x\notag\\
=&\frac{1}{2}\int_{B_t}(|u|^2+|B|^2) \Delta \eta^2 d x-\kappa\int_{B_t}\mathrm{curl} B\times B\cdot(\nabla \eta^2\times B)dx\notag\\
&+\frac{1}{2} \int_{B_t}|u|^2 u \cdot \nabla \eta^2 d x+\int_{B_t}\nabla u:\nabla w dx- \int_{B_t}(u \cdot\nabla )w \cdot u dx\\
& + \int_{B_t}(B \cdot\nabla )w\cdot Bdx- \int_{B_t}(u \cdot B)B\cdot\nabla \eta^2 dx+\frac{1}{2} \int_{B_t}|B|^2 u \cdot \nabla \eta^2 d x.\notag
\end{align}
Using the Young inequality, we obtain
\begin{align*}
\left|\kappa\int_{B_t}\mathrm{curl} B\times B\cdot(\nabla \eta^2\times B)dx\right|&\leq C\kappa\int_{B_t}|\nabla B|\eta\cdot|B|^2|\nabla\eta|dx\\
&\leq\frac{1}{2}\int_{B_t}|\nabla B|^2\eta^2dx+C\kappa^2\int_{B_t}|B|^4|\nabla\eta|^2dx,
\end{align*}
which with \eqref{ine3.3} give
\begin{align}\label{ine3.4}
&\int_{B_t}\left(|\nabla u|^{2}\eta^2+|\nabla B|^{2}\eta^2\right) d x\notag\\
\leq&\frac{C}{(t-s)^2}\int_{B_t \backslash B_s}(|u|^2+|B|^2) d x+ \frac{C}{t-s} \int_{B_t \backslash B_s}|u|^3  d x+C\int_{B_t\backslash B_\frac{\sqrt{3}t}{2}}|\nabla u||\nabla w| dx\notag\\
&+C\int_{B_t\backslash B_\frac{\sqrt{3}t}{2}}|u|^{2} |\nabla w|dx+C\int_{B_t\backslash B_\frac{\sqrt{3}t}{2}}|B|^{2}|\nabla w|dx +\frac{C}{t-s}\int_{B_t \backslash B_s}|B|^2|u|dx\\
&+\frac{C\kappa^2}{(t-s)^2}\int_{B_t\backslash B_s}|B|^4dx.\notag
\end{align}
Applying the Gagliardo-Nirenberg inequality
$$\|\varphi\|_{L^6 (B_t)}\leq C\|\nabla \varphi\|_{L^2 (B_t)}\text{ for any } \varphi\in W^{1,2}_0(B_t),$$
we have
\begin{align*}
&\|u \eta\|_{L^6 (B_t)}^2+\|B \eta\|_{L^6 (B_t)}^2\\
\leq&C\left(\|\nabla(u \eta)\|_{L^2 (B_t)}^2+\|\nabla(B \eta)\|_{L^2 (B_t)}^2 \right)\\
\leq&C \left(\|\eta\nabla u\|_{L^2 (B_t)}^2+\|\eta\nabla B\|_{L^2 (B_t)}^2+\|u\otimes\nabla \eta\|_{L^2 (B_t)}^2+\|B\otimes\nabla \eta\|_{L^2 (B_t)}^2 \right).
\end{align*}
Here it is worthy mentioning that the constant in the Gagliardo-Nirenberg inequality is not dependent on the radius $t$, which can be proved by a rescaling argument.
Combining the above inequality and \eqref{ine3.4}, we get
\begin{align}\label{ine3.5}
&\|u \eta\|_{L^6 (B_t)}^2+\|B \eta\|_{L^6 (B_t)}^2+\|\eta\nabla u\|_{L^2 (B_t)}^2+\|\eta\nabla B\|_{L^2 (B_t)}^2\notag\\
\leq&\frac{C}{(t-s)^2}\int_{B_t \backslash B_s}(|u|^2+|B|^2) d x+ \frac{C}{t-s} \int_{B_t \backslash B_s}|u|^3d x+C\int_{B_t\backslash B_\frac{\sqrt{3}t}{2}}|\nabla u|\cdot|\nabla w| dx\notag\\
&+C\int_{B_t\backslash B_\frac{\sqrt{3}t}{2}}|u|^{2} |\nabla w|dx+C\int_{B_t\backslash B_\frac{\sqrt{3}t}{2}}|B|^{2}|\nabla w|dx +\frac{C}{t-s}\int_{B_t \backslash B_s}|B|^2|u|dx\\
&+\frac{C\kappa^2}{(t-s)^2}\int_{B_t\backslash B_s}|B|^4dx.\notag
\end{align}

By the H\"{o}lder inequality and \eqref{ine3.2}, we obtain
\begin{align}\label{ine3.6}
&\frac{C}{t-s} \int_{B_t \backslash B_s}|u|^3d x+C\int_{B_t\backslash B_\frac{\sqrt{3}t}{2}}|u|^{2} |\nabla w|dx\notag\\
\leq&\frac{C}{t-s}\|u\|_{L^{3}(B_t\backslash B_s)}^3+C\|u\|_{L^{3}(B_t\backslash B_\frac{\sqrt{3}t}{2})}^{2}\|\nabla w\|_{L^{3}(B_t\backslash B_\frac{\sqrt{3}t}{2})}\notag\\
\leq&\frac{C}{t-s}\|u\|_{L^{3}(B_t\backslash B_s)}^3+\frac{C}{t-s}\|u\|_{L^{3}(B_t\backslash B_\frac{\sqrt{3}t}{2})}^{2}\|u\|_{L^{3}(B_t\backslash B_s)}\\
\leq&\frac{C}{t-s}\|u\|_{L^{3}(B_t\backslash B_\frac{3R}{2})}^3.\notag
\end{align}
By the Young inequality and \eqref{ine3.2}, we have
\begin{equation}\label{ine3.7}
\aligned
C\int_{B_t\backslash B_\frac{\sqrt{3}t}{2}}|\nabla u|\cdot|\nabla w| dx&\leq\frac{1}{2}\int_{B_t\backslash B_\frac{\sqrt{3}t}{2}}|\nabla u|^2dx+C\int_{B_t\backslash B_\frac{\sqrt{3}t}{2}}|\nabla w|^2dx\\
&\leq\frac{1}{2}\int_{B_t\backslash B_\frac{\sqrt{3}t}{2}}|\nabla u|^2dx+\frac{C}{(t-s)^2}\int_{B_t \backslash B_s}|u|^2dx.
\endaligned
\end{equation}
Using the H\"{o}lder inequality and \eqref{ine3.2} again, we obtain
\begin{align}\label{ine3.8}
&C\int_{B_t\backslash B_\frac{\sqrt{3}t}{2}}|B|^{2}|\nabla w|dx +\frac{C}{t-s}\int_{B_t \backslash B_s}|B|^2|u|dx\notag\\
\leq&C\|B\|_{L^{2p'}(B_t\backslash B_\frac{\sqrt{3}t}{2})}^{2}\|\nabla w\|_{L^p(B_t\backslash B_\frac{\sqrt{3}t}{2})}+\frac{C}{t-s}\|B\|_{L^{2p'}(B_t\backslash B_s)}^{2}\|u\|_{L^p(B_t\backslash B_s)}\notag\\
\leq&\frac{C}{t-s}\|B\|_{L^{2p'}(B_t\backslash B_\frac{\sqrt{3}t}{2})}^{2}\|u\|_{L^p(B_t\backslash B_s)}+\frac{C}{t-s}\|B\|_{L^{2p'}(B_t\backslash B_s)}^{2}\|u\|_{L^p(B_t\backslash B_s)}\\
\leq&\frac{C}{t-s}\|B\|_{L^{2p'}(B_t\backslash B_\frac{3R}{2})}^{2}\|u\|_{L^p(B_t\backslash B_s)}.\notag
\end{align}
Combining \eqref{ine3.5}, \eqref{ine3.6}, \eqref{ine3.7} and \eqref{ine3.8}, we conclude
\begin{align*}
&\|u \eta\|_{L^6 (B_t)}^2+\|B \eta\|_{L^6 (B_t)}^2+\|\eta\nabla u\|_{L^2 (B_t)}^2+\|\eta\nabla B\|_{L^2 (B_t)}^2\notag\\
\leq&\frac{1}{2}\int_{B_t\backslash B_\frac{\sqrt{3}t}{2}}|\nabla u|^2dx+\frac{C}{(t-s)^2}\int_{B_t \backslash B_s}(|u|^2+|B|^2) d x+ \frac{C}{t-s} \int_{B_t \backslash B_\frac{3R}{2}}|u|^3d x\\
&+\frac{C}{t-s}\|B\|_{L^{2p'}(B_t\backslash B_\frac{3R}{2})}^{2}\|u\|_{L^p(B_t\backslash B_s)}+\frac{C\kappa^2}{(t-s)^2}\int_{B_t\backslash B_s}|B|^4dx\notag\\
\leq&\frac{1}{2}\int_{B_t\backslash B_\frac{\sqrt{3}t}{2}}|\nabla u|^2dx+C(J_1+J_2+J_3+\kappa^2J_4),
\end{align*}
where $J_1,J_2,J_3,J_4$ are defined in Section \ref{sec2}. We denote
\begin{equation*}
f(\rho)=\|u\|_{L^6 (B_\rho)}^2+\|B\|_{L^6 (B_\rho)}^2+\|\nabla u\|_{L^2 (B_\rho)}^2+\|\nabla B\|_{L^2 (B_\rho)}^2,
\end{equation*}
and then we have
\begin{align}\label{ine3.9}
f(s)\leq\frac{1}{2}\int_{B_t\backslash B_\frac{\sqrt{3}t}{2}}|\nabla u|^2dx+C(J_1+J_2+J_3+\kappa^2J_4).
\end{align}
It is noted that we will take $\kappa=0$ in the proof of Theorem \ref{main1} and take $\kappa=1$ in the proof of Theorem \ref{main2}.

\begin{proof}[{\bf Proof of Theorem \ref{main1}}]
We divide the assumptions into two main cases, i.e., $\frac{3}{2} <p<3$ and $3 \leq p\leq\frac{9}{2}$.
Firstly, we consider the case $\frac{3}{2} <p<3$. Since
$$\liminf\limits_{R\rightarrow+\infty}\left[X_{p,\alpha}(R)+Y_{q,\beta}(R)\right]<+\infty,$$
there exists a sequence $R_j\nearrow+\infty$ such that
\begin{equation}\label{ine3.10}
 \lim\limits_{j\rightarrow+\infty}X_{p,\alpha}(R_j)<+\infty, \lim\limits_{j\rightarrow+\infty}Y_{q,\beta}(R_j)<+\infty.
\end{equation}

\textbf{Assume that (A1) holds.}
Combining \eqref{ine3.9}, \eqref{ine2.1}, \eqref{ine2.7} and \eqref{ine2.9}, we derive that
\begin{align*}
&f(s)\leq\frac{1}{2}f(t)+\frac{C}{(t-s)^{\frac{6}{p}-1}}\|u\|_{L^p(A_R)}^{2}+\frac{C}{(t-s)^2}R^{3-\frac{6}{p}}\|u\|_{L^{p}(A_R)}^{2}\\
&+\frac{C}{(t-s)^{\frac{6}{q}-1}}\|B\|_{L^{q}(A_R)}^{2}+\frac{C}{(t-s)^2}R^{3-\frac{6}{q}}\|B\|_{L^q(A_R)}^{2}\\
&+\frac{C}{(t-s)^\frac{6-p}{2p-3}}\|u\|_{L^p(A_R)}^\frac{3p}{2p-3}+\frac{C}{(t-s)^\frac{(6-q)p'}{(3-p')q}}\|u\|_{L^p(A_R)}^\frac{(6-q)p'}{(3-p')q}\|B\|_{L^q(A_R)}^2.
\end{align*}
Applying Lemma \ref{Lem2.3} to the above function inequality, and taking $s=\sqrt{3}R$ and $t=2R$, we conclude that
\begin{align}\label{ine3.11}
f(R)\leq& f\left(\sqrt{3}R\right)\leq CR^{1-\frac{6}{p}}\|u\|_{L^p(A_R)}^2+CR^{1-\frac{6}{q}}\|B\|_{L^{q}(A_R)}^{2}\notag\\
&+CR^\frac{p-6}{2p-3}\|u\|_{L^p(A_R)}^\frac{3p}{2p-3}+CR^{-\frac{(6-q)p'}{(3-p')q}}\|u\|_{L^p(A_R)}^\frac{(6-q)p'}{(3-p')q}\|B\|_{L^q(A_R)}^2\notag\\
=&CR^{1-\frac{6}{p}+2\alpha}[X_{p,\alpha}(R)]^2+CR^{1-\frac{6}{q}+2\beta}[Y_{q,\beta}(R)]^2+CR^{\frac{p-6}{2p-3}+\frac{3p}{2p-3}\alpha}[X_{p,\alpha}(R)]^\frac{3p}{2p-3}\\
&+CR^{-\frac{(6-q)p'}{(3-p')q}+\frac{(6-q)p'}{(3-p')q}\alpha+2\beta}[X_{p,\alpha}(R)]^\frac{(6-q)p'}{(3-p')q}[Y_{q,\beta}(R)]^2.\notag
\end{align}
Letting $R=R_j\rightarrow+\infty$, using \eqref{ine3.10}, and considering
$$1-\frac{6}{p}+2\alpha<0,\;1-\frac{6}{q}+2\beta\leq0,\;\frac{p-6}{2p-3}+\frac{3p}{2p-3}\alpha\leq0,$$
and
$$-\frac{(6-q)p'}{(3-p')q}+\frac{(6-q)p'}{(3-p')q}\alpha+2\beta=\frac{(6-q)p}{(2p-3)q}\left(-1+\alpha+\frac{(4p-6)q}{(6-q)p}\beta\right)\leq0,$$
we get $u,B\in L^6(\mathbb{R}^3)$ and $|\nabla u|,|\nabla B|\in L^2(\mathbb{R}^3)$. Furthermore, we have
\begin{equation}\label{lim3.12}
\aligned
&\lim_{R\rightarrow+\infty}\|u\|_{L^6(A_R)}=\lim_{R\rightarrow+\infty}\|B\|_{L^6(A_R)}=0,\\
&\lim_{R\rightarrow+\infty}\|\nabla u\|_{L^2(A_R)}=\lim_{R\rightarrow+\infty}\|\nabla B\|_{L^2(A_R)}=0.
\endaligned
\end{equation}
Combining \eqref{ine3.9}, \eqref{ine2.2}, \eqref{ine2.6} and \eqref{ine2.10}, we have
\begin{align*}
f(s)\leq&\frac{1}{2}\|\nabla u\|_{L^2(A_R)}^2+\frac{CR^2}{(t-s)^2}\left(\|u\|_{L^6(A_R)}^2 +\|B\|_{L^6(A_R)}^2\right)\\
&+\frac{C}{t-s}\|u\|_{L^p(A_R)}^{\frac{3p}{6-p}}\|u\|_{L^6(A_R)}^{\frac{18-6p}{6-p}}+\frac{C}{t-s}\|u\|_{L^p(A_R)}\|B\|_{L^q(A_R)}^\frac{2(3-p')q}{(6-q)p'}\|B\|_{L^6(A_R)}^\frac{12p'-6q}{(6-q)p'}.
\end{align*}
Taking $s=\sqrt{3}R$ and $t=2R$ in the above inequality, we get
\begin{align*}
f(R)\leq& f\left(\sqrt{3}R\right)\leq\frac{1}{2}\|\nabla u\|_{L^{2}(A_R)}^2+C\left(\|u\|_{L^6(A_R)}^2 +\|B\|_{L^6(A_R)}^2\right)\\
&+CR^{-1}\|u\|_{L^p(A_R)}^{\frac{3p}{6-p}}\|u\|_{L^6(A_R)}^{\frac{18-6p}{6-p}}+CR^{-1}\|u\|_{L^p(A_R)}\|B\|_{L^q(A_R)}^\frac{2(3-p')q}{(6-q)p'}\|B\|_{L^6(A_R)}^\frac{12p'-6q}{(6-q)p'}\\
=&\frac{1}{2}\|\nabla u\|_{L^{2}(A_R)}^2+C\left(\|u\|_{L^6(A_R)}^2 +\|B\|_{L^6(A_R)}^2\right)+CR^{\frac{3p}{6-p}\alpha-1}[X_{p,\alpha}(R)]^{\frac{3p}{6-p}}\|u\|_{L^6(A_R)}^{\frac{18-6p}{6-p}}\\
&+CR^{-1+\alpha+\frac{2(3-p')q}{(6-q)p'}\beta}X_{p,\alpha}(R)[Y_{q,\beta}(R)]^\frac{2(3-p')q}{(6-q)p'}\|B\|_{L^6(A_R)}^\frac{12p'-6q}{(6-q)p'}.
\end{align*}
Letting $R=R_j\rightarrow+\infty$ and using \eqref{lim3.12}, we find that $u=B=0$.

\textbf{Assume that (A2) holds.}
Routinely, combining \eqref{ine3.9}, \eqref{ine2.1}, \eqref{ine2.7} and \eqref{ine2.11}, and applying Lemma \ref{Lem2.3}, we derive that
\begin{align}\label{ine3.13}
f(R)\leq&CR^{1-\frac{6}{p}}\|u\|_{L^p(A_R)}^2+CR^{1-\frac{6}{q}}\|B\|_{L^{q}(A_R)}^{2}\notag\\
&+CR^\frac{p-6}{2p-3}\|u\|_{L^p(A_R)}^\frac{3p}{2p-3}+CR^{2-\frac{3}{p}-\frac{6}{q}}\|B\|_{L^q(A_R)}^{2}\|u\|_{L^p(A_R)}\notag\\
=&CR^{1-\frac{6}{p}+2\alpha}[X_{p,\alpha}(R)]^2+CR^{1-\frac{6}{q}+2\beta}[Y_{q,\beta}(R)]^2\\
&+CR^{\frac{p-6}{2p-3}+\frac{3p}{2p-3}\alpha}[X_{p,\alpha}(R)]^\frac{3p}{2p-3}+CR^{2-\frac{3}{p}-\frac{6}{q}+\alpha+2\beta}X_{p,\alpha}(R)[Y_{q,\beta}(R)]^2.\notag
\end{align}
Combining \eqref{ine3.9}, \eqref{ine2.2}, \eqref{ine2.6} and \eqref{ine2.11}, we get
\begin{align}\label{inea3.14}
f(R)\leq&\frac{1}{2}\|\nabla u\|_{L^{2}(A_R)}^2+C\left(\|u\|_{L^6(A_R)}^2 +\|B\|_{L^6(A_R)}^2\right)\notag\\
&+\frac{C}{R}\|u\|_{L^p(A_R)}^{\frac{3p}{6-p}}\|u\|_{L^6(A_R)}^{\frac{18-6p}{6-p}}+CR^{2-\frac{3}{p}-\frac{6}{q}}\|B\|_{L^q(A_R)}^{2}\|u\|_{L^p(A_R)}\\
=&\frac{1}{2}\|\nabla u\|_{L^{2}(A_R)}^2+C\left(\|u\|_{L^6(A_R)}^2 +\|B\|_{L^6(A_R)}^2\right)\notag\\
&+CR^{\frac{3p}{6-p}\alpha-1}[X_{p,\alpha}(R)]^{\frac{3p}{6-p}}\|u\|_{L^6(A_R)}^{\frac{18-6p}{6-p}}+CR^{2-\frac{3}{p}-\frac{6}{q}+\alpha
+2\beta}X_{p,\alpha}(R)[Y_{q,\beta}(R)]^2.\notag
\end{align}
Consequently, $u=B=0$.

Next, we consider the case $3 \leq p\leq\frac{9}{2}$.
Since
 $$
 \liminf\limits_{R\rightarrow+\infty}X_{p,\alpha}(R)=0,\;\limsup\limits_{R\rightarrow+\infty}Y_{q,\beta}(R)<+\infty,
 $$
there exists a sequence $R_j\nearrow+\infty$ such that
\begin{equation}\label{ine3.14}
 \lim\limits_{j\rightarrow+\infty}X_{p,\alpha}(R_j)=0,\;\lim\limits_{j\rightarrow+\infty}Y_{q,\beta}(R_j)<+\infty.
\end{equation}

\textbf{Assume that (A3) holds.} Combining \eqref{ine3.9}, \eqref{ine2.1}, \eqref{ine2.8} and \eqref{ine2.9}, we derive that
\begin{align*}
&f(s)\leq\frac{1}{2}f(t)+\frac{C}{(t-s)^{\frac{6}{p}-1}}\|u\|_{L^p(A_R)}^{2}+\frac{C}{(t-s)^2}R^{3-\frac{6}{p}}\|u\|_{L^{p}(A_R)}^{2}\\
&+\frac{C}{(t-s)^{\frac{6}{q}-1}}\|B\|_{L^{q}(A_R)}^{2}+\frac{C}{(t-s)^2}R^{3-\frac{6}{q}}\|B\|_{L^q(A_R)}^{2}\\
&+\frac{C}{t-s}R^{3-\frac{9}{p}}\|u\|_{L^{p}(A_R)}^{3}+\frac{C}{(t-s)^\frac{(6-q)p'}{(3-p')q}}\|u\|_{L^p(A_R)}^\frac{(6-q)p'}{(3-p')q}\|B\|_{L^q(A_R)}^2.
\end{align*}
Applying Lemma \ref{Lem2.3} to the above function inequality, and taking $s=\sqrt{3}R$ and $t=2R$, we conclude that
\begin{align}\label{ine3.15}
f(R)
\leq&f\left(\sqrt{3}R\right)\leq CR^{1-\frac{6}{p}}\|u\|_{L^{p}(A_R)}^{2}+CR^{1-\frac{6}{q}}\|B\|_{L^{q}(A_R)}^{2}\notag\\
&+CR^{2-\frac{9}{p}}\|u\|_{L^{p}(A_R)}^{3}+CR^{-\frac{(6-q)p'}{(3-p')q}}\|u\|_{L^p(A_R)}^\frac{(6-q)p'}{(3-p')q}\|B\|_{L^q(A_R)}^2\notag\\
=&CR^{1-\frac{6}{p}+2\alpha}[X_{p,\alpha}(R)]^2+CR^{1-\frac{6}{q}+2\beta}[Y_{q,\beta}(R)]^2+CR^{2-\frac{9}{p}+3\alpha}[X_{p,\alpha}(R)]^3\\
&+CR^{-\frac{(6-q)p'}{(3-p')q}+\frac{(6-q)p'}{(3-p')q}\alpha+2\beta}[X_{p,\alpha}(R)]^\frac{(6-q)p'}{(3-p')q}[Y_{q,\beta}(R)]^2.\notag
\end{align}
Letting $R=R_j\rightarrow+\infty$, using \eqref{ine3.14}, and considering
$$1-\frac{6}{p}+2\alpha<0,\;1-\frac{6}{q}+2\beta\leq0,\;2-\frac{9}{p}+3\alpha\leq0,$$
and
$$-\frac{(6-q)p'}{(3-p')q}+\frac{(6-q)p'}{(3-p')q}\alpha+2\beta=\frac{(6-q)p}{(2p-3)q}\left(-1+\alpha+\frac{(4p-6)q}{(6-q)p}\beta\right)\leq0,$$
we get $u,B\in L^6(\mathbb{R}^3)$ and $|\nabla u|,|\nabla B|\in L^2(\mathbb{R}^3)$. Furthermore, \eqref{lim3.12} holds.
Combining \eqref{ine3.9}, \eqref{ine2.2}, \eqref{ine2.8} and \eqref{ine2.9}, we obtain
\begin{align*}
f(s)&\leq\frac{1}{2}\|\nabla u\|_{L^2(A_R)}^2+\frac{CR^2}{(t-s)^2}\left(\|u\|_{L^6(A_R)}^2 +\|B\|_{L^6(A_R)}^2\right)+\frac{C}{t-s}R^{3-\frac{9}{p}}\|u\|_{L^{p}(A_R)}^{3}\\
&+\frac{1}{2}\|B\|_{L^6(A_R)}^2+\frac{C}{(t-s)^\frac{(6-q)p'}{(3-p')q}}\|u\|_{L^p(A_R)}^\frac{(6-q)p'}{(3-p')q}\|B\|_{L^q(A_R)}^2.
\end{align*}
Taking $s=\sqrt{3}R$ and $t=2R$ in the above inequality, we get
\begin{align*}
f(R)\leq& f\left(\sqrt{3}R\right)\leq \frac{1}{2}\|\nabla u\|_{L^2(A_R)}^2+C\left(\|u\|_{L^6(A_R)}^2 +\|B\|_{L^6(A_R)}^2\right)\\
&+CR^{2-\frac{9}{p}}\|u\|_{L^{p}(A_R)}^{3}+CR^{-\frac{(6-q)p'}{(3-p')q}}\|u\|_{L^p(A_R)}^\frac{(6-q)p'}{(3-p')q}\|B\|_{L^q(A_R)}^2\\
=&\frac{1}{2}\|\nabla u\|_{L^2(A_R)}^2+C\left(\|u\|_{L^6(A_R)}^2 +\|B\|_{L^6(A_R)}^2\right)+CR^{2-\frac{9}{p}+3\alpha}[X_{p,\alpha}(R)]^3\\
&+CR^{-\frac{(6-q)p'}{(3-p')q}+\frac{(6-q)p'}{(3-p')q}\alpha+2\beta}[X_{p,\alpha}(R)]^\frac{(6-q)p'}{(3-p')q}[Y_{q,\beta}(R)]^2.
\end{align*}
Letting $R=R_j\rightarrow+\infty$ and considering \eqref{lim3.12}, we find that $u=B=0$.

\textbf{Assume that (A4) holds.} Routinely, combining \eqref{ine3.9}, \eqref{ine2.1}, \eqref{ine2.8} and \eqref{ine2.11}, and applying Lemma \ref{Lem2.3}, we deduce that
\begin{align*}
f(R)\leq&CR^{1-\frac{6}{p}}\|u\|_{L^p(A_R)}^2+CR^{1-\frac{6}{q}}\|B\|_{L^{q}(A_R)}^{2}\\
&+CR^{2-\frac{9}{p}}\|u\|_{L^{p}(A_R)}^{3}+CR^{2-\frac{3}{p}-\frac{6}{q}}\|B\|_{L^q(A_R)}^{2}\|u\|_{L^p(A_R)}\\
=&CR^{1-\frac{6}{p}+2\alpha}[X_{p,\alpha}(R)]^2+CR^{1-\frac{6}{q}+2\beta}[Y_{q,\beta}(R)]^2+CR^{2-\frac{9}{p}+3\alpha}[X_{p,\alpha}(R)]^3\\
&+CR^{2-\frac{3}{p}-\frac{6}{q}+\alpha+2\beta}X_{p,\alpha}(R)[Y_{q,\beta}(R)]^2.
\end{align*}
Combining \eqref{ine3.9}, \eqref{ine2.2}, \eqref{ine2.8} and \eqref{ine2.11}, we have
\begin{align*}
f(R)\leq&\frac{1}{2}\|\nabla u\|_{L^2(A_R)}^2+C\left(\|u\|_{L^6(A_R)}^2 +\|B\|_{L^6(A_R)}^2\right)\\
&+CR^{2-\frac{9}{p}}\|u\|_{L^{p}(A_R)}^{3}+CR^{2-\frac{3}{p}-\frac{6}{q}}\|u\|_{L^{p}(A_R)}\|B\|_{L^q (A_R)}^{2}\\
=&\frac{1}{2}\|\nabla u\|_{L^2(A_R)}^2+C\left(\|u\|_{L^6(A_R)}^2 +\|B\|_{L^6(A_R)}^2\right)\\
&+CR^{2-\frac{9}{p}+3\alpha}[X_{p,\alpha}(R)]^3+CR^{2-\frac{3}{p}-\frac{6}{q}+\alpha+2\beta}X_{p,\alpha}(R)[Y_{q,\beta}(R)]^2.
\end{align*}
Consequently, $u=B=0$.

\end{proof}

\begin{proof}[{\bf Proof of Theorem \ref{main2}}]
For the Hall-MHD equations, we take $\kappa=1$, so we need to deal with the additional term $J_4$, which
can be estimated by Lemma \ref{Lem2.7}.

\textbf{Assume that (B1) holds.} Routinely, combining \eqref{ine3.9}, \eqref{ine2.1}, \eqref{ine2.7}, \eqref{ine2.9}  and \eqref{ine2.12}, and applying Lemma \ref{Lem2.3}, we deduce that
\begin{align*}
f(R)\leq&CR^{1-\frac{6}{p}+2\alpha}[X_{p,\alpha}(R)]^2+CR^{1-\frac{6}{q}+2\beta}[Y_{q,\beta}(R)]^2+CR^{\frac{p-6}{2p-3}+\frac{3p}{2p-3}\alpha}[X_{p,\alpha}(R)]^\frac{3p}{2p-3}\\
&+CR^{-\frac{(6-q)p'}{(3-p')q}+\frac{(6-q)p'}{(3-p')q}\alpha+2\beta}[X_{p,\alpha}(R)]^\frac{(6-q)p'}{(3-p')q}[Y_{q,\beta}(R)]^2\\
&+Ch(q)R^{-\frac{6-q}{q-3}+\frac{q}{q-3}\beta}[Y_{q,\beta}(R)]^\frac{q}{q-3}+CR^{1-\frac{12}{q}+4\beta}[Y_{q,\beta}(R)]^4.
\end{align*}
Noticing that
$$\text{if }\beta\in\left[0,\frac{3}{q}-\frac{1}{2}\right],\text{ then }-\frac{6-q}{q-3}+\frac{q}{q-3}\beta\leq-\frac{6-q}{2q-6},\;1-\frac{12}{q}+4\beta\leq-1;$$
we find $u,B\in L^6(\mathbb{R}^3)$ and $|\nabla u|,|\nabla B|\in L^2(\mathbb{R}^3)$. Furthermore, \eqref{lim3.12} holds.
Combining \eqref{ine3.9}, \eqref{ine2.2}, \eqref{ine2.6}, \eqref{ine2.10} and \eqref{ine2.13}, we have
\begin{align*}
f(R)\leq&\frac{1}{2}\|\nabla u\|_{L^{2}(A_R)}^2+C\left(\|u\|_{L^6(A_R)}^2 +\|B\|_{L^6(A_R)}^2\right)+CR^{\frac{3p}{6-p}\alpha-1}[X_{p,\alpha}(R)]^{\frac{3p}{6-p}}\|u\|_{L^6(A_R)}^{\frac{18-6p}{6-p}}\\
&+CR^{-1+\alpha+\frac{2(3-p')q}{(6-q)p'}\beta}X_{p,\alpha}(R)[Y_{q,\beta}(R)]^\frac{2(3-p')q}{(6-q)p'}\|B\|_{L^6(A_R)}^\frac{12p'-6q}{(6-q)p'}+CR^{-1}\|B\|_{L^6(A_R)}^4.
\end{align*}
Consequently, $u=B=0$.

\textbf{Assume that (B2) holds.}
Routinely, combining \eqref{ine3.9}, \eqref{ine2.1}, \eqref{ine2.7}, \eqref{ine2.11} and \eqref{ine2.12}, and applying Lemma \ref{Lem2.3}, we derive that
\begin{align*}
f(R)\leq&CR^{1-\frac{6}{p}+2\alpha}[X_{p,\alpha}(R)]^2+CR^{1-\frac{6}{q}+2\beta}[Y_{q,\beta}(R)]^2+CR^{\frac{p-6}{2p-3}+\frac{3p}{2p-3}\alpha}[X_{p,\alpha}(R)]^\frac{3p}{2p-3}\\
&+CR^{2-\frac{3}{p}-\frac{6}{q}+\alpha+2\beta}X_{p,\alpha}(R)[Y_{q,\beta}(R)]^2+Ch(q)R^{-\frac{6-q}{q-3}+\frac{q}{q-3}\beta}[Y_{q,\beta}(R)]^\frac{q}{q-3}\\
&+CR^{1-\frac{12}{q}+4\beta}[Y_{q,\beta}(R)]^4,
\end{align*}
which implies that $u,B\in L^6(\mathbb{R}^3)$ and $|\nabla u|,|\nabla B|\in L^2(\mathbb{R}^3)$, and \eqref{lim3.12} holds.
Combining \eqref{ine3.9}, \eqref{ine2.2}, \eqref{ine2.6}, \eqref{ine2.11} and \eqref{ine2.13}, we have
\begin{align*}
f(R)\leq&\frac{1}{2}\|\nabla u\|_{L^{2}(A_R)}^2+C\left(\|u\|_{L^6(A_R)}^2 +\|B\|_{L^6(A_R)}^2\right)+CR^{\frac{3p}{6-p}\alpha-1}[X_{p,\alpha}(R)]^{\frac{3p}{6-p}}\|u\|_{L^6(A_R)}^{\frac{18-6p}{6-p}}\\
&+CR^{2-\frac{3}{p}-\frac{6}{q}+\alpha+2\beta}X_{p,\alpha}(R)[Y_{q,\beta}(R)]^2+CR^{-1}\|B\|_{L^6(A_R)}^4.
\end{align*}
Consequently, $u=B=0$.

\textbf{Assume that (B3) holds.}
Routinely, combining \eqref{ine3.9}, \eqref{ine2.1}, \eqref{ine2.8}, \eqref{ine2.11} and \eqref{ine2.12}, and applying Lemma \ref{Lem2.3}, we demonstrate that
\begin{align*}
f(R)
\leq&CR^{1-\frac{6}{p}+2\alpha}[X_{p,\alpha}(R)]^2+CR^{1-\frac{6}{q}+2\beta}[Y_{q,\beta}(R)]^2+CR^{2-\frac{9}{p}+3\alpha}[X_{p,\alpha}(R)]^3\\
&+CR^{2-\frac{3}{p}-\frac{6}{q}+\alpha+2\beta}X_{p,\alpha}(R)[Y_{q,\beta}(R)]^2+Ch(q)R^{-\frac{6-q}{q-3}+\frac{q}{q-3}\beta}[Y_{q,\beta}(R)]^\frac{q}{q-3}\\
&+CR^{1-\frac{12}{q}+4\beta}[Y_{q,\beta}(R)]^4.
\end{align*}
which implies that $u,B\in L^6(\mathbb{R}^3)$ and $|\nabla u|,|\nabla B|\in L^2(\mathbb{R}^3)$, and \eqref{lim3.12} holds.
Combining \eqref{ine3.9}, \eqref{ine2.2}, \eqref{ine2.8}, \eqref{ine2.11} and \eqref{ine2.13}, we have
\begin{align*}
f(R)\leq &\frac{1}{2}\|\nabla u\|_{L^2(A_R)}+C\left(\|u\|_{L^6(A_R)}^2 +\|B\|_{L^6(A_R)}^2\right)+CR^{2-\frac{9}{p}+3\alpha}[X_{p,\alpha}(R)]^3\\
&+CR^{2-\frac{3}{p}-\frac{6}{q}+\alpha+2\beta}X_{p,\alpha}(R)[Y_{q,\beta}(R)]^2+CR^{-1}\|B\|_{L^6(A_R)}^4.
\end{align*}
Consequently, $u=B=0$.
\end{proof}

\begin{proof}[{\bf Proof of Remark \ref{Rem1.3}}]
We only prove the case of the MHD equations. Since the proof is similar to that of (A2), we only show the difference.
Applying the H\"{o}lder inequality to $\|B\|_{L^q(A_R)}$, and then substituting it into \eqref{inea3.14}, we find that
\begin{equation*}
\aligned
f(R)\leq&\frac{1}{2}\|\nabla u\|_{L^{2}(A_R)}^2+C\left(\|u\|_{L^6(A_R)}^2+\|B\|_{L^6(A_R)}^2\right)\\
&+CR^{\frac{3p}{6-p}\alpha-1}[X_{p,\alpha}(R)]^{\frac{3p}{6-p}}\|u\|_{L^6(A_R)}^{\frac{18-6p}{6-p}}+CR^{1-\frac{3}{p}+\alpha}X_{p,\alpha}(R)\|B\|_{L^6(A_R)}^2.
\endaligned
\end{equation*}
As a consequence, $u=B=0$.
\end{proof}

\begin{Rem}\label{Rem3.1}
Both $\mathrm{(A2)}$ and $\mathrm{(B2)}$ can be replaced by the following assumption
$$\mathrm{(A6)}\;p\in\left(\frac{3}{2},3\right),\;q\in[2p',6],\;\alpha\in\left[0,\frac{2}{p}-\frac{1}{3}\right],\;\beta\in\left[0,\frac{3}{q}-\frac{1}{2}\right],\;\alpha+2\beta=\frac{3}{p}+\frac{6}{q}-2,$$
but the price is that we need to assume in addition that
$$\liminf\limits_{R\rightarrow+\infty}X_{p,\alpha}(R)=0,\;\limsup\limits_{R\rightarrow+\infty}Y_{q,\beta}(R)<+\infty,\text{ or }$$
$$\limsup\limits_{R\rightarrow+\infty}X_{p,\alpha}(R)<+\infty,\;\liminf\limits_{R\rightarrow+\infty}Y_{q,\beta}(R)=0.$$
\end{Rem}

\begin{proof}[{\bf Proof of Remark \ref{Rem3.1}}]
We only prove the case of the MHD equations. In view of the decisive term $CR^{2-\frac{3}{p}-\frac{6}{q}+\alpha+2\beta}X_{p,\alpha}(R)[Y_{q,\beta}(R)]^2$
in \eqref{inea3.14}, we see that the conclusion holds.
\end{proof}

\begin{proof}[{\bf Proof of Corollary \ref{Cor1.7}}]
Since $u\in L^p(\mathbb{R}^3)$, $B\in L^q(\mathbb{R}^3)$, we obtain
$$\lim_{R\rightarrow+\infty}\|u\|_{L^p\left(A_R\right)}=\lim_{R\rightarrow+\infty}\|B\|_{L^q\left(A_R\right)}=0.$$
We divide the range of $p,q$ into four parts, i.e.,
$$
\aligned
&\text{Case I: $\frac{3}{2} <p<3$, $1\leq q<2p'$,}\;&\text{Case II: $\frac{3}{2} <p<3$, $2p'\leq q\leq6$,}\\
&\text{Case III: $3 \leq p\leq\frac{9}{2}$, $1\leq q<2p'$,}\;&\text{Case IV: $3 \leq p\leq\frac{9}{2}$, $2p'\leq q\leq6$.}
\endaligned
$$
By Theorem \ref{main1}, we find that Liouville type theorem holds for Case I and Case III without any additional conditions. Moreover, for Case I and Case III, we have
$$
\frac{1}{p}+\frac{2}{q}>\frac{1}{p}+\frac{2}{2p'}=1>\frac{2}{3}.
$$
By Theorem \ref{main1} and Remark \ref{Rem3.1}, we see that Liouville type theorem holds for Case II and Case IV with the condition
$$
\frac{1}{p}+\frac{2}{q}\geq\frac{2}{3}.
$$

\end{proof}

\begin{proof}[{\bf Proof of Corollary \ref{Cor1.8}}]
The proof is similar to that of Corollary \ref{Cor1.7}.  The difference is that we require $q>3$.
\end{proof}

\vspace {0.1cm}

\section{Proof of Logarithmic improvement of Liouville type theorems}\label{sec4}
In this section, let $\eta$ be a cut-off function defined by
\begin{align*}
\eta(x)= \begin{cases}
1, & |x| <\frac{3R}{2}, \\
4-\frac{2}{R}|x|,& \frac{3R}{2}\leq |x|\leq 2R,\\
0, & |x| >2R.
\end{cases}
\end{align*}
For any $R>0$, we define
\begin{equation*}
E(R)=\int_{\mathbb{R}^3}\left(|\nabla u|^{2}\eta+|\nabla B|^{2}\eta\right) d x.
\end{equation*}
We rewrite $E(R)$ as the following form
$$
E(R)=\int_{B_{\frac{3R}{2}}}\left(|\nabla u|^{2}+|\nabla B|^{2}\right) dx+\int_{A_R}\left(|\nabla u|^{2}+|\nabla B|^{2}\right)\left(-\frac{2}{R}|x|+4\right)dx.
$$
By a direct calculation, we obtain
\begin{align}\label{ine4.1}
E'(R)=&\frac{3}{2}\int_{\partial B_{\frac{3R}{2}}}\left(|\nabla u|^{2}+|\nabla B|^{2}\right) dS+\int_{A_R}\left(|\nabla u|^{2}+|\nabla B|^{2}\right)\frac{2}{R^2}|x|dx\notag\\
&+2\int_{\partial B_{2R}}\left(|\nabla u|^{2}+|\nabla B|^{2}\right)\left(-\frac{2}{R}\cdot2R+4\right)dS\notag\\
&-\frac{3}{2}\int_{\partial B_{\frac{3R}{2}}}\left(|\nabla u|^{2}+|\nabla B|^{2}\right)\left(-\frac{2}{R}\cdot\frac{3R}{2}+4\right)dS\\
=&\int_{A_R}\left(|\nabla u|^{2}+|\nabla B|^{2}\right)\frac{2}{R^2}|x|dx\notag\\
\geq&\frac{3}{R}\int_{A_R}\left(|\nabla u|^{2}+|\nabla B|^{2}\right)dx.\notag
\end{align}

The notation of $\overline{\varphi}_R$ represents the mean value of $\varphi$ on the annulus $A_R$. Since
$$\int_{A_R}(u-\overline{u}_R)\cdot\nabla\eta dx=\int_{B_{2R}}(u-\overline{u}_R)\cdot\nabla\eta dx=\int_{B_{2R}}\nabla\cdot [(u-\overline{u}_R)\eta]dx=0,$$
by Lemma \ref{Lem2.1} and Lemma \ref{Lem2.2}, there exists $w\in W_{0}^{1,r}(A_R)$ such that $w$ satisfies the following equation
\begin{align*}
\mathrm{div} w=(u-\overline{u}_R)\cdot\nabla\eta \text{ in }A_R,
\end{align*}
with the estimate
\begin{align}\label{ine4.2}
\aligned
\|\nabla w\|_{L^{r}(A_R)}\leq C\|(u-\overline{u}_R)\cdot\nabla\eta\|_{L^{r}(A_R)}\leq CR^{-1}\|u-\overline{u}_R\|_{L^{r}(A_R)},
\endaligned
\end{align}
for any $1<r<+\infty$. We extend $w$ by zero to $B_\frac{3R}{2}$, then $w\in W_{0}^{1,r}(B_{2R}).$

Let $(u,B)$ be a solution to \eqref{equ3.1}.
Denote $v=u-\overline{u}_R$, $H=B-\overline{B}_R$, respectively, then we find that $(v,H)$ satisfies
\begin{align}\label{equ4.3}
	\left\{
	\begin{array}{ll}
		-\Delta v+(v\cdot\nabla) v+(\overline{u}_R\cdot\nabla) v+\nabla \pi=(H\cdot\nabla) H+(\overline{B}_R\cdot\nabla) H,  \\
		-\Delta H+\kappa\mathrm{curl}(\mathrm{curl}H\times H)+\kappa\mathrm{curl}(\mathrm{curl} H\times \overline{B}_R)+(v\cdot\nabla)H+ \\
        \quad(\overline{u}_R\cdot\nabla)H-(H\cdot\nabla)v-(\overline{B}_R\cdot\nabla)v=0,\\
		\mathrm{div} v=\mathrm{div} H=0.
	\end{array}
	\right.
\end{align}
Denote the $i$-th component of $v$ and $H$ by $v_i$ and $H_i$, respectively. Multiply both sides of $\eqref{equ4.3}_{1}$ and $\eqref{equ4.3}_{2}$ by $v \eta-w$ and $H \eta$ respectively, integrate over $B_{2R}$ and apply integration by parts. This procedure yields
\begin{align}\label{ine4.4}
E&(R)=\int_{B_{2R}}\left(|\nabla v|^{2}\eta+|\nabla H|^{2}\eta\right) d x\notag\\
=&-\int_{B_{2R}}\sum_{i=1}^3 \nabla v_i\cdot (v_i\nabla\eta)d x-\int_{B_{2R}}\sum_{i=1}^3 \nabla H_i\cdot (H_i\nabla\eta)d x+\int_{B_{2R}}\nabla v:\nabla w dx\notag\\
&+\frac{1}{2} \int_{B_{2R}}(|v|^2+|H|^2)v \cdot \nabla \eta d x+\frac{1}{2} \int_{B_{2R}}(|v|^2+|H|^2)\overline{u}_R \cdot \nabla \eta d x\notag\\
&-\kappa\int_{B_{2R}}\mathrm{curl}H\times H\cdot(\nabla \eta\times H)dx-\kappa\int_{B_{2R}}\mathrm{curl}H\times \overline{B}_R\cdot(\nabla \eta\times H)dx\\
&- \int_{B_{2R}}(v \cdot\nabla )w \cdot v dx- \int_{B_{2R}}(\overline{u}_R \cdot\nabla )w \cdot v dx+\int_{B_{2R}}(H \cdot\nabla )w\cdot H dx\notag\\
&+\int_{B_{2R}}(\overline{B}_R \cdot\nabla )w\cdot H dx-\int_{B_{2R}}(v \cdot H)H\cdot\nabla \eta dx-\int_{B_{2R}}(v \cdot H)\overline{B}_R\cdot\nabla \eta dx.\notag
\end{align}
With the help of the H\"{o}lder inequaity and the Poincar\'{e} inequality
$$\|\varphi-\overline{\varphi}_R\|_{L^2(A_R)}\leq CR\|\nabla \varphi\|_{L^2(A_R)}\text{ for any } \varphi\in W^{1,2}(A_R),$$
we have
\begin{align}\label{ine4.5}
&\left|-\int_{B_{2R}}\sum_{i=1}^3 \nabla v_i\cdot (v_i\nabla\eta)d x-\int_{B_{2R}}\sum_{i=1}^3 \nabla H_i\cdot (H_i\nabla\eta)d x\right|\notag\\
\leq& CR^{-1}\|\nabla v\|_{L^2 (A_R)}\|v\|_{L^2 (A_R)}+CR^{-1}\|\nabla H\|_{L^2 (A_R)}\|H\|_{L^2 (A_R)}\\
\leq& C\|\nabla u\|_{L^2 (A_R)}^2+C\|\nabla B\|_{L^2 (A_R)}^2.\notag
\end{align}
Using the H\"{o}lder inequality, \eqref{ine4.2} and the Poincar\'{e} inequality, we get
\begin{align}\label{ine4.6}
\aligned
\left|\int_{B_{2R}}\nabla v:\nabla w dx\right|&\leq\int_{A_R}|\nabla v||\nabla w| dx\\
&\leq\|\nabla v\|_{L^2(A_R)}\|\nabla w\|_{L^2(A_R)}\\
&\leq \|\nabla v\|_{L^2(A_R)}\cdot CR^{-1}\|v\|_{L^2(A_R)}\\
&\leq C\|\nabla u\|_{L^2(A_R)}^2.
\endaligned
\end{align}
Using the Minkowski inequality and the H\"{o}lder inequality, we derive
\begin{align}\label{ine4.7}
\aligned
\|v\|_{L^p(A_R)}&\leq \|u\|_{L^p(A_R)}+\|\overline{u}_R\|_{L^p(A_R)}\\
&\leq\|u\|_{L^p(A_R)}+CR^\frac{3}{p}|\overline{u}_R|\\
&\leq C\|u\|_{L^p(A_R)}.
\endaligned
\end{align}
Similarly, we can derive
\begin{align}\label{ine4.8}
\aligned
\|H\|_{L^q(A_R)}&\leq C\|B\|_{L^q(A_R)}.
\endaligned
\end{align}
By the H\"{o}lder inequality and \eqref{ine4.7}, we have
\begin{align}\label{ine4.9}
\aligned
\left|\int_{B_{2R}}(|v|^2+|H|^2)v \cdot \nabla \eta d x\right|&\leq CR^{-1}\left(\|v\|_{L^3(A_R)}^3+\|v\|_{L^p(A_R)}\|H\|_{L^{2p'}(A_R)}^2\right)\\
&\leq CR^{-1}\left(\|v\|_{L^3(A_R)}^3+\|u\|_{L^p(A_R)}\|H\|_{L^{2p'}(A_R)}^2\right).
\endaligned
\end{align}
By the H\"{o}lder inequality and the Young inequality, we obtain
\begin{align}\label{ine4.10}
&\left|\int_{B_{2R}}(|v|^2+|H|^2)\overline{u}_R \cdot \nabla \eta dx\right|\notag\\
\leq& CR^{-1}\|v\|_{L^3(A_R)}^2\|\overline{u}_R\|_{L^3(A_R)}+CR^{-1}\|\overline{u}_R\|_{L^p(A_R)}\|H\|_{L^{2p'}(A_R)}^2\notag\\
\leq& CR^{-\frac{2}{3}}\|v\|_{L^3(A_R)}^2\cdot R^{\frac{2}{3}-\frac{3}{p}}\|u\|_{L^p(A_R)}+CR^{-1}\|u\|_{L^p(A_R)}\|H\|_{L^{2p'}(A_R)}^2\\
\leq&CR^{-1}\|v\|_{L^3(A_R)}^3+CR^{2-\frac{9}{p}}\|u\|_{L^p(A_R)}^3+CR^{-1}\|u\|_{L^p(A_R)}\|H\|_{L^{2p'}(A_R)}^2.\notag
\end{align}
Applying the H\"{o}lder inequality, the interpolation inequality, the Poincar\'{e} inequality and the Sobolev-Poincar\'{e} inequality
$$\|\varphi-\overline{\varphi}_R\|_{L^6(A_R)}\leq C\|\nabla \varphi\|_{L^2(A_R)}\text{ for any } \varphi\in W^{1,2}(A_R),$$
we obtain
\begin{align}\label{ine4.11}
\left|\kappa\int_{B_{2R}}\mathrm{curl}H\times H\cdot(\nabla \eta\times H)dx\right|\leq& C\kappa R^{-1}\int_{A_R}|\nabla H|\cdot |H|^2dx\notag\\
\leq&C\kappa R^{-1}\|\nabla H\|_{L^2(A_R)}\|H\|_{L^4(A_R)}^2\notag\\
\leq&C\kappa R^{-1}\|\nabla H\|_{L^2(A_R)}\|H\|_{L^2(A_R)}^\frac{1}{2}\|H\|_{L^6(A_R)}^\frac{3}{2}\\
\leq&C\kappa R^{-\frac{1}{2}}\|\nabla B\|_{L^2(A_R)}\cdot\|\nabla B\|_{L^2(A_R)}^2.\notag
\end{align}
Applying the H\"{o}lder inequality and the Poincar\'{e} inequality, we get
\begin{align}\label{ine4.12}
\left|\kappa\int_{B_{2R}}\mathrm{curl}H\times \overline{B}_R\cdot(\nabla \eta\times H)dx\right|\leq& C\kappa R^{-1}\left|\overline{B}_R\right|\int_{A_R}|\nabla H|\cdot |H|dx\notag\\
\leq& C\kappa R^{-1}\left|\overline{B}_R\right|\|\nabla H\|_{L^2(A_R)}\|H\|_{L^2(A_R)}\notag\\
\leq& C\kappa \left|\overline{B}_R\right|\|\nabla H\|_{L^2(A_R)}^2\\
\leq& C\kappa R^{-\frac{3}{q}}\|B\|_{L^q(A_R)}\cdot\|\nabla B\|_{L^2(A_R)}^2.\notag
\end{align}
By the H\"{o}lder inequality and \eqref{ine4.2}, we obtain
\begin{align}\label{ine4.13}
\aligned
\left|\int_{B_{2R}}(v \cdot\nabla )w \cdot v dx\right|\leq\|\nabla w\|_{L^3(A_R)}\|v\|_{L^3(A_R)}^2\leq CR^{-1}\|v\|_{L^3(A_R)}^3.
\endaligned
\end{align}
Using the H\"{o}lder inequality, \eqref{ine4.2} and \eqref{ine4.10}, we have
\begin{align}\label{ine4.14}
\left|\int_{B_{2R}}(\overline{u}_R \cdot\nabla )w \cdot v dx\right|\leq&\|\overline{u}_R\|_{L^3(A_R)}\|\nabla w\|_{L^3(A_R)}\|v\|_{L^3(A_R)}\notag\\
\leq& CR^{-1}\|v\|_{L^3(A_R)}^2\|\overline{u}_R\|_{L^3(A_R)}\\
\leq&CR^{-1}\|v\|_{L^3(A_R)}^3+CR^{2-\frac{9}{p}}\|u\|_{L^p(A_R)}^3.\notag
\end{align}
Using the H\"{o}lder inequality, \eqref{ine4.2} and \eqref{ine4.7}, we have
\begin{align}\label{ine4.15}
&\left|\int_{B_{2R}}(H \cdot\nabla )w\cdot H dx\right|+\left|\int_{B_{2R}}(v \cdot H)H\cdot\nabla \eta dx\right|\notag\\
\leq&\|\nabla w\|_{L^p(A_R)}\|H\|_{L^{2p'}(A_R)}^2+CR^{-1}\|v\|_{L^{p}(A_R)}\|H\|_{L^{2p'}(A_R)}^2\notag\\
\leq&CR^{-1}\|v\|_{L^{p}(A_R)}\|H\|_{L^{2p'}(A_R)}^2\\
\leq&CR^{-1}\|u\|_{L^{p}(A_R)}\|H\|_{L^{2p'}(A_R)}^2,\notag
\end{align}
and
\begin{align}\label{ine4.16}
\aligned
&\left|\int_{B_{2R}}(\overline{B}_R \cdot\nabla )w\cdot H dx\right|+\left|\int_{B_{2R}}(v \cdot H)\overline{B}_R\cdot\nabla \eta dx\right|\\
\leq&\|\overline{B}_R\|_{L^{2p'}(A_R)}\|\nabla w\|_{L^p(A_R)}\|H\|_{L^{2p'}(A_R)}+CR^{-1}\|v\|_{L^{p}(A_R)}\|H\|_{L^{2p'}(A_R)}\|\overline{B}_R\|_{L^{2p'}(A_R)}\\
\leq&CR^{-1}\|v\|_{L^{p}(A_R)}\|H\|_{L^{2p'}(A_R)}\|\overline{B}_R\|_{L^{2p'}(A_R)}\\
\leq&CR^{\frac{1}{2}-\frac{3}{2p}-\frac{3}{q}}\|u\|_{L^{p}(A_R)}\|H\|_{L^{2p'}(A_R)}\|B\|_{L^q(A_R)}.
\endaligned
\end{align}

Combining \eqref{ine4.4}, \eqref{ine4.5}, \eqref{ine4.6}, \eqref{ine4.9}, \eqref{ine4.10}, \eqref{ine4.11}, \eqref{ine4.12}, \eqref{ine4.13}, \eqref{ine4.14}, \eqref{ine4.15} and \eqref{ine4.16}, we conclude
\begin{align}\label{ine4.17}
\aligned
E(R)\leq& C\left(\|\nabla u\|_{L^2(A_R)}^2+\|\nabla B\|_{L^2(A_R)}^2\right)+CR^{-1}\left(\|u\|_{L^{p}(A_R)}\|H\|_{L^{2p'}(A_R)}^2+\|v\|_{L^3(A_R)}^3\right)\\
&+CR^{2-\frac{9}{p}}\|u\|_{L^p(A_R)}^3+CR^{\frac{1}{2}-\frac{3}{2p}-\frac{3}{q}}\|u\|_{L^{p}(A_R)}\|H\|_{L^{2p'}(A_R)}\|B\|_{L^q(A_R)}\\
&+C\kappa\left(R^{-\frac{1}{2}}\|\nabla B\|_{L^2(A_R)}+R^{-\frac{3}{q}}\|B\|_{L^q(A_R)}\right)\|\nabla B\|_{L^2(A_R)}^2.
\endaligned
\end{align}
It is noted that we will take $\kappa=0$ in the proof of Theorem \ref{main3} and take $\kappa=1$ in the proof of Theorem \ref{main4}.

For the sake of simplicity, we denote
\begin{align}
&\gamma_1=2-\frac{9}{p}+3\alpha,\;\gamma_2=2-\frac{3}{p}-\frac{6}{q}+\alpha+2\beta,\label{gam1-2}\\
&\gamma_3=\frac{1}{2}-\frac{3}{2p}-\frac{3}{q}+\alpha+\frac{6p+pq-3q}{(6-q)p}\beta.\label{gam3}
\end{align}

\begin{proof}[{\bf Proof of Theorem \ref{main3}}]
\textbf{Assume that (C1) holds with $\beta\in\left[0,\frac{3}{q}-\frac{1}{2}\right)$.}
Using the interpolation inequality, \eqref{ine4.8} and the Sobolev-Poincar\'{e} inequality, we obtain
\begin{align}\label{ine4.18}
\aligned
CR^{-1}\|u\|_{L^{p}(A_R)}\|H\|_{L^{2p'}(A_R)}^2&\leq CR^{-1}\|u\|_{L^{p}(A_R)}\|H\|_{L^q(A_R)}^\frac{(6-2p')q}{(6-q)p'}\|H\|_{L^6(A_R)}^\frac{12p'-6q}{(6-q)p'}\\
&\leq CR^{-1}\|u\|_{L^{p}(A_R)}\|B\|_{L^q(A_R)}^\frac{(4p-6)q}{(6-q)p}\|\nabla B\|_{L^2(A_R)}^\frac{12p-6q(p-1)}{(6-q)p}.
\endaligned
\end{align}
Using the interpolation inequality, \eqref{ine4.7} and the Sobolev-Poincar\'{e} inequality, we obtain
\begin{align}\label{ine4.19}
\aligned
CR^{-1}\|v\|_{L^3(A_R)}^3&\leq CR^{-1}\|v\|_{L^p(A_R)}^{\frac{3p}{6-p}}\|v\|_{L^6(A_R)}^{\frac{18-6p}{6-p}}\\
&\leq CR^{-1}\|u\|_{L^p(A_R)}^{\frac{3p}{6-p}}\|\nabla u\|_{L^2(A_R)}^{\frac{18-6p}{6-p}}.
\endaligned
\end{align}
By the Young inequality, we have
\begin{align}\label{ine4.20}
&CR^{\frac{1}{2}-\frac{3}{2p}-\frac{3}{q}}\|u\|_{L^{p}(A_R)}\|H\|_{L^{2p'}(A_R)}\|B\|_{L^q(A_R)}\notag\\
=&CR^{-\frac{1}{2}}\|u\|_{L^{p}(A_R)}^\frac{1}{2}\|H\|_{L^{2p'}(A_R)}\cdot R^{1-\frac{3}{2p}-\frac{3}{q}}\|u\|_{L^{p}(A_R)}^\frac{1}{2}\|B\|_{L^q(A_R)}\\
\leq&CR^{-1}\|u\|_{L^{p}(A_R)}\|H\|_{L^{2p'}(A_R)}^2+CR^{2-\frac{3}{p}-\frac{6}{q}}\|u\|_{L^p(A_R)}\|B\|_{L^q(A_R)}^2,\notag
\end{align}
Combining \eqref{ine4.17}, \eqref{ine4.18}, \eqref{ine4.19} and \eqref{ine4.20}, we conclude
\begin{align}\label{ine4.21}
E(R)\leq &C\left(\|\nabla u\|_{L^2(A_R)}^2+\|\nabla B\|_{L^2(A_R)}^2\right)+CR^{-1}\|u\|_{L^p(A_R)}^{\frac{3p}{6-p}}\|\nabla u\|_{L^2(A_R)}^{\frac{18-6p}{6-p}}\notag\\
&+CR^{-1}\|u\|_{L^{p}(A_R)}\|B\|_{L^q(A_R)}^\frac{(4p-6)q}{(6-q)p}\|\nabla B\|_{L^2(A_R)}^\frac{12p-6q(p-1)}{(6-q)p}+CR^{2-\frac{9}{p}}\|u\|_{L^p(A_R)}^3\\
&+CR^{2-\frac{3}{p}-\frac{6}{q}}\|u\|_{L^p(A_R)}\|B\|_{L^q(A_R)}^2.\notag
\end{align}

Since (C1) holds, there exists a constant $R_1>3$ such that the following two inequalities hold for any $R>R_1$:
\begin{equation}\label{ine4.22}
\|u\|_{L^p\left(A_R\right)}\leq CR^\alpha(\ln R)^\lambda,
\end{equation}
\begin{equation}\label{ine4.23}
\|B\|_{L^q\left(A_R\right)}\leq CR^\beta(\ln R)^\mu,
\end{equation}
which with \eqref{ine3.11} imply
\begin{align}\label{ine4.24}
f(R)\leq& CR^{1-\frac{6}{p}+2\alpha}(\ln R)^{2\lambda}+CR^{1-\frac{6}{q}+2\beta}(\ln R)^{2\mu}+CR^{\frac{p-6}{2p-3}+\frac{3p}{2p-3}\alpha}(\ln R)^{\frac{3p}{2p-3}\lambda}\notag\\
&+CR^{-\frac{(6-q)p'}{(3-p')q}+\frac{(6-q)p'}{(3-p')q}\alpha+2\beta}(\ln R)^{\frac{(6-q)p'}{(3-p')q}\lambda}(\ln R)^{2\mu}\notag\\
\leq &C(\ln R)^{\frac{3p}{2p-3}\lambda}+C(\ln R)^{\frac{(6-q)p}{(2p-3)q}\lambda+2\mu}\\
\leq&C(\ln R)^\frac{9-3p}{2p-3}+C(\ln R)^\frac{6p-3q(p-1)}{(2p-3)q},\notag
\end{align}
for any $R\geq R_2$, where $R_2(>R_1)$ is a sufficiently large constant. Here we have used (C1) with $\beta\in\left[0,\frac{3}{q}-\frac{1}{2}\right)$, \eqref{ass1.8} and \eqref{ass1.9}.
Without loss of generality, we assume $p\leq q$. At this moment, we have the following facts in hand:
$$\frac{6p-3q(p-1)}{(2p-3)q}=-\frac{3(p-1)}{2p-3}+\frac{6p}{(2p-3)q}\leq -\frac{3(p-1)}{2p-3}+\frac{6p}{(2p-3)p}=\frac{9-3p}{2p-3},$$
$$\frac{6p-3q(p-1)}{(6-q)p}=\frac{3(p-1)}{p}-\frac{12p-18}{(6-q)p}\leq\frac{3(p-1)}{p}-\frac{12p-18}{(6-p)p}=\frac{9-3p}{6-p}.$$
Hence, it follows from \eqref{ine4.24} that
\begin{equation*}
f(R)\leq C(\ln R)^\frac{9-3p}{2p-3}.
\end{equation*}

Using \eqref{ine4.1} and the above estimate, we have
\begin{align}\label{ine4.25}
&\|\nabla u\|_{L^2(A_R)}^2+\|\nabla B\|_{L^2(A_R)}^2\notag\\
=&\left(\|\nabla u\|_{L^2(A_R)}^2+\|\nabla B\|_{L^2(A_R)}^2\right)^{\frac{2p-3}{6-p}}\left(\|\nabla u\|_{L^2(A_R)}^2+\|\nabla B\|_{L^2(A_R)}^2\right)^{\frac{9-3p}{6-p}}\notag\\
\leq &C[f(2R)]^\frac{2p-3}{6-p}[RE'(R)]^\frac{9-3p}{6-p}\\
\leq& C[R\ln RE'(R)]^\frac{9-3p}{6-p}.\notag
\end{align}
Here we point out that if $p>q$, then we will take advantage of the following estimate
\begin{align*}
&\|\nabla u\|_{L^2(A_R)}^2+\|\nabla B\|_{L^2(A_R)}^2\notag\\
=&\left(\|\nabla u\|_{L^2(A_R)}^2+\|\nabla B\|_{L^2(A_R)}^2\right)^{1-\frac{6p-3q(p-1)}{(6-q)p}}\left(\|\nabla u\|_{L^2(A_R)}^2+\|\nabla B\|_{L^2(A_R)}^2\right)^\frac{6p-3q(p-1)}{(6-q)p}\notag\\
\leq &C[f(2R)]^{1-\frac{6p-3q(p-1)}{(6-q)p}}[RE'(R)]^\frac{6p-3q(p-1)}{(6-q)p}\\
\leq& C[R\ln RE'(R)]^\frac{6p-3q(p-1)}{(6-q)p}.\notag
\end{align*}

Combining \eqref{ine4.21}, \eqref{ine4.22}, \eqref{ine4.23}, \eqref{ine4.25} and \eqref{ine4.1}, we find that
\begin{align}\label{ine4.26}
E(R)\leq& C[R\ln R E'(R)]^\frac{9-3p}{6-p}+CR^{\frac{3p}{6-p}\alpha-1}(\ln R)^{\frac{3p}{6-p}\lambda}[RE'(R)]^\frac{9-3p}{6-p}\notag\\
&+CR^{\alpha+\frac{(4p-6)q}{(6-q)p}\beta-1}(\ln R)^{\lambda+\frac{(4p-6)q}{(6-q)p}\mu}[RE'(R)]^\frac{6p-3q(p-1)}{(6-q)p}\notag\\
&+CR^{\gamma_1}(\ln R)^{3\lambda}+CR^{\gamma_2}(\ln R)^{\lambda+2\mu}\\
\leq&C[R\ln R E'(R)]^\frac{9-3p}{6-p}+C[R\ln R E'(R)]^\frac{6p-3q(p-1)}{(6-q)p}\notag\\
&+CR^{\gamma_1}(\ln R)^{3\lambda}+CR^{\gamma_2}(\ln R)^{\lambda+2\mu},\notag
\end{align}
where $\gamma_1$ and $\gamma_2$ are defined in \eqref{gam1-2}.
Noticing that
$$
\gamma_1\leq2-\frac{9}{p}+3\left(\frac{2}{p}-\frac{1}{3}\right)=1-\frac{3}{p}<0,
$$
and
$$
\aligned
\gamma_2&\leq2-\frac{3}{p}-\frac{6}{q}+1-\frac{(4p-6)q}{(6-q)p}\beta+2\beta\\
&=3-\frac{3}{p}-\frac{6}{q}+\frac{6p-3pq+3q}{(6-q)p}\cdot2\beta\\
&<3-\frac{3}{p}-\frac{6}{q}+\frac{3(p-1)(2p'-q)}{(6-q)p}\cdot2\left(\frac{3}{q}-\frac{1}{2}\right)\\
&=0,
\endaligned
$$
the last two terms in \eqref{ine4.26} are good terms.

We claim that $E(R)\equiv0$, otherwise, in view of the nondecreasing property of $E(R)$, there exists a constant $R_0>R_2$ such that
$$\text{$E(R)\geq E(R_0)>0$ for any $R\geq R_0$.}$$
By the Young inequality, we obtain
\begin{align}\label{ine4.27}
\aligned
C[R\ln R E'(R)]^\frac{6p-3q(p-1)}{(6-q)p}&\leq \frac{1}{4}E(R_0)+C[R\ln R E'(R)]^\frac{9-3p}{6-p}\\
&\leq\frac{1}{4}E(R)+C[R\ln R E'(R)]^\frac{9-3p}{6-p},
\endaligned
\end{align}
which with \eqref{ine4.26} guarantee that
\begin{align}\label{ine4.28}
\aligned
E(R)\leq C[R\ln R E'(R)]^\frac{9-3p}{6-p}+CR^{\gamma_1}(\ln R)^{3\lambda}+CR^{\gamma_2}(\ln R)^{\lambda+2\mu}.
\endaligned
\end{align}
Applying the property of the convex function $t^r$, $r>1$, we derive that
\begin{align}\label{ine4.29}
\aligned
(a+b+c)^r\leq 3^{r-1}(a^r+b^r+c^r),\text{ where }a,b,c\geq0.
\endaligned
\end{align}
Using \eqref{ine4.28} and \eqref{ine4.29}, we get
\begin{align*}
\aligned
E(R)^\frac{6-p}{9-3p}\leq CR\ln R E'(R)+CR^{\frac{6-p}{9-3p}\gamma_1}(\ln R)^{\frac{6-p}{3-p}\lambda}+CR^{\frac{6-p}{9-3p}\gamma_2}(\ln R)^{\frac{6-p}{9-3p}(\lambda+2\mu)}.
\endaligned
\end{align*}
Consequently, it follows that
\begin{align*}
&\ln\ln R-\ln\ln R_0=\int_{R_0}^R\frac{1}{\rho\ln\rho}d\rho\\
\leq &C\int_{R_0}^R\frac{E'(\rho)+\rho^{\frac{6-p}{9-3p}\gamma_1-1}(\ln \rho)^{\frac{6-p}{3-p}\lambda-1}+\rho^{\frac{6-p}{9-3p}\gamma_2-1}(\ln \rho)^{\frac{6-p}{9-3p}(\lambda+2\mu)-1}}{E(\rho)^\frac{6-p}{9-3p}}d\rho\\
\leq&C\int_{R_0}^R\frac{E'(\rho)}{E(\rho)^\frac{6-p}{9-3p}}d\rho+C\int_{R_0}^R\frac{\rho^{\frac{6-p}{9-3p}\gamma_1-1}(\ln \rho)^{\frac{6-p}{3-p}\lambda-1}+\rho^{\frac{6-p}{9-3p}\gamma_2-1}(\ln \rho)^{\frac{6-p}{9-3p}(\lambda+2\mu)-1}}{E(R_0)^{\frac{6-p}{9-3p}}}d\rho\\
\leq &CE(R_0)^{-\frac{2p-3}{9-3p}}+C\int_{R_0}^R\frac{\rho^{\frac{6-p}{9-3p}\gamma_1-1}(\ln \rho)^{\frac{6-p}{3-p}\lambda-1}+\rho^{\frac{6-p}{9-3p}\gamma_2-1}(\ln \rho)^{\frac{6-p}{9-3p}(\lambda+2\mu)-1}}{E(R_0)^{\frac{6-p}{9-3p}}}d\rho\\
<&+\infty.
\end{align*}
Letting $R\rightarrow+\infty$, we get a contradiction. Finally, the following simple inequality
$$\|\nabla u\|_{L^2(B_R)}^2+\|\nabla B\|_{L^2(B_R)}^2\leq E(R)$$
suggests that $u,B$ are constant vectors. Thanks to the condition $$\limsup\limits_{R\rightarrow+\infty}\left[X_{p,\alpha,\lambda}(R)+Y_{q,\beta,\mu}(R)\right]<+\infty,$$
 we conclude that $u=B=0$.

\textbf{Assume that (C1) holds with $\beta=\frac{3}{q}-\frac{1}{2}$.} Since $\beta=\frac{3}{q}-\frac{1}{2}$, it is possible that $\gamma_2=0$. Therefore, we need to estimate the term $CR^{\frac{1}{2}-\frac{3}{2p}-\frac{3}{q}}\|u\|_{L^{p}(A_R)}\|H\|_{L^{2p'}(A_R)}\|B\|_{L^q(A_R)}$ by a new method.
Using the interpolation inequality, \eqref{ine4.8}, and the Sobolev-Poincar\'{e} inequality, we have
\begin{align}\label{ine4.30}
&CR^{\frac{1}{2}-\frac{3}{2p}-\frac{3}{q}}\|u\|_{L^{p}(A_R)}\|H\|_{L^{2p'}(A_R)}\|B\|_{L^q(A_R)}\notag\\
\leq&CR^{\frac{1}{2}-\frac{3}{2p}-\frac{3}{q}}\|u\|_{L^{p}(A_R)}\|B\|_{L^q(A_R)}\|H\|_{L^q(A_R)}^\frac{(3-p')q}{(6-q)p'}\|H\|_{L^6(A_R)}^\frac{6p'-3q}{(6-q)p'}\notag\\
\leq&CR^{\frac{1}{2}-\frac{3}{2p}-\frac{3}{q}}\|u\|_{L^{p}(A_R)}\|B\|_{L^q(A_R)}\|B\|_{L^q(A_R)}^\frac{(2p-3)q}{(6-q)p}\|\nabla B\|_{L^2(A_R)}^\frac{6p-3q(p-1)}{(6-q)p}\\
=&CR^{\frac{1}{2}-\frac{3}{2p}-\frac{3}{q}}\|u\|_{L^{p}(A_R)}\|B\|_{L^q(A_R)}^\frac{6p+pq-3q}{(6-q)p}\|\nabla B\|_{L^2(A_R)}^\frac{6p-3q(p-1)}{(6-q)p}.\notag
\end{align}
Combining \eqref{ine4.17}, \eqref{ine4.18}, \eqref{ine4.19} and \eqref{ine4.30}, we conclude
\begin{align}\label{ine4.31}
E(R)\leq &C\left(\|\nabla u\|_{L^2(A_R)}^2+\|\nabla B\|_{L^2(A_R)}^2\right)+CR^{-1}\|u\|_{L^p(A_R)}^{\frac{3p}{6-p}}\|\nabla u\|_{L^2(A_R)}^{\frac{18-6p}{6-p}}\notag\\
&+CR^{-1}\|u\|_{L^{p}(A_R)}\|B\|_{L^q(A_R)}^\frac{(4p-6)q}{(6-q)p}\|\nabla B\|_{L^2(A_R)}^\frac{12p-6q(p-1)}{(6-q)p}+CR^{2-\frac{9}{p}}\|u\|_{L^p(A_R)}^3\\
&+CR^{\frac{1}{2}-\frac{3}{2p}-\frac{3}{q}}\|u\|_{L^{p}(A_R)}\|B\|_{L^q(A_R)}^\frac{6p+pq-3q}{(6-q)p}\|\nabla B\|_{L^2(A_R)}^\frac{6p-3q(p-1)}{(6-q)p}.\notag
\end{align}
Combining \eqref{ine4.31}, \eqref{ine4.22}, \eqref{ine4.23}, \eqref{ine4.25}, \eqref{ine4.1} and the assumption (C1), we find
\begin{align}\label{ine4.32}
E(R)\leq& C[R\ln R E'(R)]^\frac{9-3p}{6-p}+CR^{\frac{3p}{6-p}\alpha-1}(\ln R)^{\frac{3p}{6-p}\lambda}[RE'(R)]^\frac{9-3p}{6-p}\notag\\
&+CR^{\alpha+\frac{(4p-6)q}{(6-q)p}\beta-1}(\ln R)^{\lambda+\frac{(4p-6)q}{(6-q)p}\mu}[RE'(R)]^\frac{6p-3q(p-1)}{(6-q)p}\notag\\
&+CR^{\gamma_1}(\ln R)^{3\lambda}+CR^{\gamma_3}(\ln R)^{\lambda+\frac{6p+pq-3q}{(6-q)p}\mu}[RE'(R)]^\frac{6p-3q(p-1)}{2(6-q)p}\\
\leq&C[R\ln R E'(R)]^\frac{9-3p}{6-p}+C[R\ln R E'(R)]^\frac{6p-3q(p-1)}{(6-q)p}\notag\\
&+CR^{\gamma_1}(\ln R)^{3\lambda}+C[R\ln RE'(R)]^\frac{6p-3q(p-1)}{2(6-q)p},\notag
\end{align}
where $\gamma_3$ is defined in \eqref{gam3}. In \eqref{ine4.32}, we also have used the fact that
$$
\aligned
\gamma_3&\leq\frac{1}{2}-\frac{3}{2p}-\frac{3}{q}+1-\frac{(4p-6)q}{(6-q)p}\beta+\frac{6p+pq-3q}{(6-q)p}\beta\\
&=\frac{3}{2}-\frac{3}{2p}-\frac{3}{q}+\frac{6p-3pq+3q}{(6-q)p}\cdot\beta\\
&=\frac{3}{2}-\frac{3}{2p}-\frac{3}{q}+\frac{3(p-1)(2p'-q)}{(6-q)p}\cdot \left(\frac{3}{q}-\frac{1}{2}\right)\\
&=0.
\endaligned
$$
By the way, we point out that the following two conditions
$$\lambda+\frac{(4p-6)q}{(6-q)p}\mu\leq\frac{6p-3q(p-1)}{(6-q)p},$$
$$\lambda+\frac{6p+pq-3q}{(6-q)p}\mu\leq\frac{6p-3q(p-1)}{2(6-q)p}$$
are required in \eqref{ine4.32}, and the latter condition is stronger than the former one. Actually, it holds that
$$\lambda+\frac{6p+pq-3q}{(6-q)p}\mu=\lambda+\frac{(4p-6)q}{(6-q)p}\mu+\frac{3(p-1)(2p'-q)}{(6-q)p}\mu.$$

We also claim that $E(R)\equiv0$, otherwise, there exists a constant $R_0>R_2$ such that $E(R_0)>0$.
Using the Young inequality twice, we obtain
\begin{align*}
C[R\ln R E'(R)]^\frac{6p-3q(p-1)}{2(6-q)p}&\leq \frac{1}{4}E(R_0)+C[R\ln R E'(R)]^\frac{6p-3q(p-1)}{(6-q)p}\notag\\
&\leq \frac{1}{4}E(R_0)+\frac{1}{4}E(R_0)+C[R\ln R E'(R)]^\frac{9-3p}{6-p}\\
&\leq\frac{1}{2}E(R)+C[R\ln R E'(R)]^\frac{9-3p}{6-p}.\notag
\end{align*}
Combining \eqref{ine4.32}, \eqref{ine4.27} and the above inequality, we have
\begin{align*}
\aligned
E(R)\leq C[R\ln R E'(R)]^\frac{9-3p}{6-p}+CR^{\gamma_1}(\ln R)^{3\lambda},
\endaligned
\end{align*}
which follows that
\begin{align*}
\aligned
E(R)^\frac{6-p}{9-3p}\leq CR\ln R E'(R)+CR^{\frac{6-p}{9-3p}\gamma_1}(\ln R)^{\frac{6-p}{3-p}\lambda}.
\endaligned
\end{align*}
Consequently, it follows that
\begin{align*}
&\ln\ln R-\ln\ln R_0=\int_{R_0}^R\frac{1}{\rho\ln\rho}d\rho\\
\leq &C\int_{R_0}^R\frac{E'(\rho)+\rho^{\frac{6-p}{9-3p}\gamma_1-1}(\ln \rho)^{\frac{6-p}{3-p}\lambda-1}}{E(\rho)^\frac{6-p}{9-3p}}d\rho\\
\leq&C\int_{R_0}^R\frac{E'(\rho)}{E(\rho)^\frac{6-p}{9-3p}}d\rho+C\int_{R_0}^R\frac{\rho^{\frac{6-p}{9-3p}\gamma_1-1}(\ln \rho)^{\frac{6-p}{3-p}\lambda-1}}{E(R_0)^{\frac{6-p}{9-3p}}}d\rho\\
\leq &CE(R_0)^{-\frac{2p-3}{9-3p}}+C\int_{R_0}^R\frac{\rho^{\frac{6-p}{9-3p}\gamma_1-1}(\ln \rho)^{\frac{6-p}{3-p}\lambda-1}}{E(R_0)^{\frac{6-p}{9-3p}}}d\rho\\
<&+\infty.
\end{align*}
Letting $R\rightarrow+\infty$, we get a contradiction. Therefore, $u=B=0$.

\textbf{Assume that (C2) holds.} Based on (C2), we see that \eqref{ine4.22} and \eqref{ine4.23} still hold.
Using the H\"{o}lder inequality and \eqref{ine4.8}, we get
\begin{align}\label{ine4.33}
\aligned
CR^{-1}\|u\|_{L^{p}(A_R)}\|H\|_{L^{2p'}(A_R)}^2&\leq CR^{2-\frac{3}{p}-\frac{6}{q}}\|u\|_{L^p(A_R)}\|H\|_{L^q(A_R)}^2\\
&\leq CR^{2-\frac{3}{p}-\frac{6}{q}}\|u\|_{L^p(A_R)}\|B\|_{L^q(A_R)}^2.
\endaligned
\end{align}
Thanks to \eqref{ine3.13}, we have
\begin{align}\label{ine4.34}
f(R)\leq&CR^{1-\frac{6}{p}}\|u\|_{L^p(A_R)}^2+CR^{1-\frac{6}{q}}\|B\|_{L^{q}(A_R)}^{2}\notag\\
&+CR^\frac{p-6}{2p-3}\|u\|_{L^p(A_R)}^\frac{3p}{2p-3}+CR^{2-\frac{3}{p}-\frac{6}{q}}\|B\|_{L^q(A_R)}^{2}\|u\|_{L^p(A_R)}\notag\\
\leq&CR^{1-\frac{6}{p}+2\alpha}(\ln R)^{2\lambda}+CR^{1-\frac{6}{q}+2\beta}(\ln R)^{2\mu}+CR^{\frac{p-6}{2p-3}+\frac{3p}{2p-3}\alpha}(\ln R)^{\frac{3p}{2p-3}\lambda}\\
&+CR^{2-\frac{3}{p}-\frac{6}{q}+\alpha+2\beta}(\ln R)^{\lambda+2\mu}\notag\\
\leq& C(\ln R)^{2\mu}+C(\ln R)^\frac{9-3p}{2p-3},\notag
\end{align}
for any $R\geq R_2$, where $R_2(>R_1)$ is a sufficiently large constant.

When $2\mu\leq\frac{9-3p}{2p-3}$, it follows from \eqref{ine4.34} that
$$f(R)\leq C(\ln R)^\frac{9-3p}{2p-3},$$
which guarantees that \eqref{ine4.25} is still valid.
Combining \eqref{ine4.17}, \eqref{ine4.19}, \eqref{ine4.20}, \eqref{ine4.25} and \eqref{ine4.33}, we obtain
\begin{align*}
\aligned
E(R)\leq&  C[R\ln R E'(R)]^\frac{9-3p}{6-p}+CR^{\gamma_1}(\ln R)^{3\lambda}+CR^{\gamma_2}(\ln R)^{\lambda+2\mu}.
\endaligned
\end{align*}
Routinely, we obtain $u=B=0$.

When $2\mu>\frac{9-3p}{2p-3}$, it follows from \eqref{ine4.34} that
$$f(R)\leq C(\ln R)^{2\mu}.$$
Using \eqref{ine4.1} and the above estimate, we have
\begin{align}\label{ine4.35}
&\|\nabla u\|_{L^2(A_R)}^2+\|\nabla B\|_{L^2(A_R)}^2\notag\\
=&\left(\|\nabla u\|_{L^2(A_R)}^2+\|\nabla B\|_{L^2(A_R)}^2\right)^\frac{1}{1+2\mu}\left(\|\nabla u\|_{L^2(A_R)}^2+\|\nabla B\|_{L^2(A_R)}^2\right)^\frac{2\mu}{1+2\mu}\notag\\
\leq &C[f(2R)]^\frac{1}{1+2\mu}[RE'(R)]^\frac{2\mu}{1+2\mu}\\
\leq& C[R\ln RE'(R)]^\frac{2\mu}{1+2\mu}.\notag
\end{align}
Combining \eqref{ine4.17}, \eqref{ine4.19}, \eqref{ine4.20}, \eqref{ine4.33} and \eqref{ine4.35}, we obtain
\begin{align}\label{ine4.36}
\aligned
E(R)\leq& C[R\ln RE'(R)]^\frac{2\mu}{1+2\mu}+C[R\ln R E'(R)]^\frac{9-3p}{6-p}\\
&+CR^{\gamma_1}(\ln R)^{3\lambda}+CR^{\gamma_2}(\ln R)^{\lambda+2\mu}.
\endaligned
\end{align}
It is easy to verify that
$$\frac{2\mu}{1+2\mu}>\frac{9-3p}{6-p},\text{ if }2\mu>\frac{9-3p}{2p-3}.$$
By the Young inequality, we obtain
\begin{align*}
\aligned
C[R\ln R E'(R)]^\frac{9-3p}{6-p}&\leq \frac{1}{4}E(R_0)+C[R\ln R E'(R)]^\frac{2\mu}{1+2\mu}\\
&\leq\frac{1}{4}E(R)+C[R\ln R E'(R)]^\frac{2\mu}{1+2\mu},
\endaligned
\end{align*}
which with \eqref{ine4.36} imply
\begin{align*}
E(R)\leq C[R\ln R E'(R)]^\frac{2\mu}{1+2\mu}+CR^{\gamma_1}(\ln R)^{3\lambda}+CR^{\gamma_2}(\ln R)^{\lambda+2\mu}.
\end{align*}
Routinely, we obtain $u=B=0$.

\textbf{Assume that (C3) holds with $\beta\in\left[0,\frac{3}{q}-\frac{1}{2}\right)$.} In view of (C3), there exists a constant $R_1>3$ such that the following two inequalities hold for any $R>R_1$:
\begin{equation*}
\|u\|_{L^p\left(A_R\right)}\leq CR^\alpha,\;\|B\|_{L^q\left(A_R\right)}\leq CR^\beta(\ln R)^\mu.
\end{equation*}
Owing to \eqref{ine3.15}, it holds that
\begin{align*}
f(R)
\leq& CR^{1-\frac{6}{p}}\|u\|_{L^{p}(A_R)}^{2}+CR^{1-\frac{6}{q}}\|B\|_{L^{q}(A_R)}^{2}\notag\\
&+CR^{2-\frac{9}{p}}\|u\|_{L^{p}(A_R)}^{3}+CR^{-\frac{(6-q)p'}{(3-p')q}}\|u\|_{L^p(A_R)}^\frac{(6-q)p'}{(3-p')q}\|B\|_{L^q(A_R)}^2\notag\\
\leq&CR^{1-\frac{6}{p}+2\alpha}+CR^{1-\frac{6}{q}+2\beta}(\ln R)^{2\mu}+CR^{2-\frac{9}{p}+3\alpha}+CR^{-\frac{(6-q)p'}{(3-p')q}+\frac{(6-q)p'}{(3-p')q}\alpha+2\beta}(\ln R)^{2\mu}.\notag
\end{align*}
Consequently, we have
\begin{equation}\label{ine4.39}
f(R)\leq C(\ln R)^{2\mu}\text{ for any $R\geq R_2$,}
\end{equation}
where $R_2(>R_1)$ is a sufficiently large constant.
Using the H\"{o}lder inequality and \eqref{ine4.7}, we obtain
\begin{align}\label{ine4.40}
\aligned
CR^{-1}\|v\|_{L^3(A_R)}^3&\leq CR^{2-\frac{9}{p}}\|v\|_{L^p(A_R)}^3\leq CR^{2-\frac{9}{p}}\|u\|_{L^p(A_R)}^3.
\endaligned
\end{align}
Combining \eqref{ine4.17}, \eqref{ine4.18}, \eqref{ine4.40} and \eqref{ine4.20}, we obtain
\begin{align}\label{ine4.41}
E(R)\leq &C\left(\|\nabla u\|_{L^2(A_R)}^2+\|\nabla B\|_{L^2(A_R)}^2\right)+CR^{-1}\|u\|_{L^{p}(A_R)}\|B\|_{L^q(A_R)}^\frac{(4p-6)q}{(6-q)p}\|\nabla B\|_{L^2(A_R)}^\frac{12p-6q(p-1)}{(6-q)p}\notag\\
&+CR^{2-\frac{9}{p}}\|u\|_{L^p(A_R)}^3+CR^{2-\frac{3}{p}-\frac{6}{q}}\|u\|_{L^p(A_R)}\|B\|_{L^q(A_R)}^2\notag\\
\leq &C\left(\|\nabla u\|_{L^2(A_R)}^2+\|\nabla B\|_{L^2(A_R)}^2\right)+C[R\ln R E'(R)]^\frac{6p-3q(p-1)}{(6-q)p}\\
&+CR^{\gamma_1}X^3_{p,\alpha}(R)+CR^{\gamma_2}(\ln R)^{2\mu}.\notag
\end{align}
Denote
\begin{equation}\label{theta}
\theta=\max\left\{\frac{2\mu}{1+2\mu},\;\frac{6p-3q(p-1)}{(6-q)p}\right\}.
\end{equation}
It is not difficult to check that
$$\theta\in(0,1)\text{ and } 2\mu(1-\theta)\leq \theta.$$
Using \eqref{ine4.1} and \eqref{ine4.39}, we have
\begin{align}\label{ine4.42}
&\|\nabla u\|_{L^2(A_R)}^2+\|\nabla B\|_{L^2(A_R)}^2\notag\\
=&\left(\|\nabla u\|_{L^2(A_R)}^2+\|\nabla B\|_{L^2(A_R)}^2\right)^{1-\theta}\left(\|\nabla u\|_{L^2(A_R)}^2+\|\nabla B\|_{L^2(A_R)}^2\right)^\theta\notag\\
\leq &C[f(2R)]^{1-\theta}[RE'(R)]^\theta\\
\leq &C(\ln R)^{2\mu(1-\theta)}[RE'(R)]^\theta\notag\\
\leq& C[R\ln RE'(R)]^\theta.\notag
\end{align}
By the Young inequality, we obtain
\begin{align}\label{ine4.43}
\aligned
C[R\ln R E'(R)]^\frac{6p-3q(p-1)}{(6-q)p}&\leq \frac{1}{4}E(R_0)+C[R\ln R E'(R)]^\theta\\
&\leq\frac{1}{4}E(R)+C[R\ln R E'(R)]^\theta.
\endaligned
\end{align}
Since $\lim\limits_{R\rightarrow+\infty}[CR^{\gamma_1}X^3_{p,\alpha}(R)+CR^{\gamma_2}(\ln R)^{2\mu}]=0$, we can find a constant $R_3>R_0$ large enough such that
\begin{align}\label{ine4.44}
CR^{\gamma_1}X^3_{p,\alpha}(R)+CR^{\gamma_2}(\ln R)^{2\mu}\leq \frac{1}{4}E(R_0)\leq \frac{1}{4}E(R)\text{ for any $R\geq R_3$}.
\end{align}
Combining \eqref{ine4.41}, \eqref{ine4.42}, \eqref{ine4.43} and \eqref{ine4.44}, we conclude that
\begin{align*}
E(R)\leq C[R\ln R E'(R)]^\theta \text{ for any $R\geq R_3$.}
\end{align*}
Routinely, we obtain $u=B=0$.

\textbf{Assume that (C3) holds with $\beta=\frac{3}{q}-\frac{1}{2}$.} Combining \eqref{ine4.17}, \eqref{ine4.18}, \eqref{ine4.40} and \eqref{ine4.30}, we obtain
\begin{align*}
E(R)\leq &C\left(\|\nabla u\|_{L^2(A_R)}^2+\|\nabla B\|_{L^2(A_R)}^2\right)+CR^{-1}\|u\|_{L^{p}(A_R)}\|B\|_{L^q(A_R)}^\frac{(4p-6)q}{(6-q)p}\|\nabla B\|_{L^2(A_R)}^\frac{12p-6q(p-1)}{(6-q)p}\notag\\
&+CR^{2-\frac{9}{p}}\|u\|_{L^p(A_R)}^3+CR^{\frac{1}{2}-\frac{3}{2p}-\frac{3}{q}}\|u\|_{L^{p}(A_R)}\|B\|_{L^q(A_R)}^\frac{6p+pq-3q}{(6-q)p}\|\nabla B\|_{L^2(A_R)}^\frac{6p-3q(p-1)}{(6-q)p}\notag\\
\leq &C\left(\|\nabla u\|_{L^2(A_R)}^2+\|\nabla B\|_{L^2(A_R)}^2\right)+C[R\ln R E'(R)]^\frac{6p-3q(p-1)}{(6-q)p}\notag\\
&+CR^{\gamma_1}X^3_{p,\alpha}(R)+C[R\ln RE'(R)]^\frac{6p-3q(p-1)}{2(6-q)p}.\notag
\end{align*}
Routinely, we can derive that
\begin{align*}
E(R)\leq C[R\ln R E'(R)]^\theta\text{ for any large } R,
\end{align*}
where $\theta$ is given by \eqref{theta}.
Consequently, we obtain $u=B=0$.
\end{proof}

\begin{proof}[{\bf Proof of Theorem \ref{main4}}]
For the Hall-MHD equations, we take $\kappa=1$. Observing that
$$
R^{-\frac{1}{2}}\|\nabla B\|_{L^2(A_R)}+R^{-\frac{3}{q}}\|B\|_{L^q(A_R)}\leq R^{-\frac{1}{2}}[f(2R)]^\frac{1}{2}+CR^{-\frac{3}{q}+\beta}(\ln R)^\mu\leq C,
$$
for any $R$ large enough, we find that the following new term
$$
C\kappa\left(R^{-\frac{1}{2}}\|\nabla B\|_{L^2(A_R)}+R^{-\frac{3}{q}}\|B\|_{L^q(A_R)}\right)\|\nabla B\|_{L^2(A_R)}^2,
$$
which is produced by the Hall-MHD equations, can be absorbed by $C\|\nabla B\|_{L^2(A_R)}^2$. Hence, the rest of proof is almost the same with the MHD equations.
\end{proof}

\begin{proof}[{\bf Proof of Remark \ref{Rem1.6}}]
We only prove the case of the MHD equations.  According to the assumption (C4), we revise \eqref{ine4.34} slightly, and then we obtain
\begin{align}\label{ine4.46}
f(R)\leq C(\ln R)^{\lambda+2\mu}.
\end{align}
Denote
\begin{equation*}
\theta=\max\left\{\frac{2\lambda+2\mu}{1+\lambda+2\mu},\;\frac{9-3p}{6-p},\;\frac{1}{2}\right\}.
\end{equation*}
Then it holds that
$$\theta\in(0,1)\text{ and }\lambda+(\lambda+2\mu)(1-\theta)\leq\theta.$$
Using \eqref{ine4.1} and \eqref{ine4.46}, we have
\begin{align}\label{ine4.47}
&\|\nabla u\|_{L^2(A_R)}^2+\|\nabla B\|_{L^2(A_R)}^2\notag\\
=&\left(\|\nabla u\|_{L^2(A_R)}^2+\|\nabla B\|_{L^2(A_R)}^2\right)^{1-\theta}\left(\|\nabla u\|_{L^2(A_R)}^2+\|\nabla B\|_{L^2(A_R)}^2\right)^\theta\notag\\
\leq &C[f(2R)]^{1-\theta}[RE'(R)]^\theta\\
\leq &C(\ln R)^{(\lambda+2\mu)(1-\theta)}[RE'(R)]^\theta\notag\\
\leq& C[R\ln RE'(R)]^\theta.\notag
\end{align}
Using the H\"{o}lder inequality, the Sobolev-Poincar\'{e} inequality, \eqref{ine4.1} and \eqref{ine4.46}, we have
\begin{align}\label{ine4.48}
&CR^{-1}\|u\|_{L^{p}(A_R)}\|H\|_{L^{2p'}(A_R)}^2\notag\\
\leq&CR^{1-\frac{3}{p}}\|u\|_{L^{p}(A_R)}\|H\|_{L^6(A_R)}^2\notag\\
\leq& CR^{1-\frac{3}{p}}\|u\|_{L^{p}(A_R)}\|\nabla B\|_{L^2(A_R)}^2\notag\\
=&CR^{1-\frac{3}{p}}\|u\|_{L^{p}(A_R)}\|\nabla B\|_{L^2(A_R)}^{2(1-\theta)}\|\nabla B\|_{L^2(A_R)}^{2\theta}\\
\leq& CR^{1-\frac{3}{p}}\|u\|_{L^{p}(A_R)} [f(2R)]^{1-\theta}[RE'(R)]^\theta\notag\\
\leq& C(\ln R)^{\lambda+(\lambda+2\mu)(1-\theta)}[RE'(R)]^\theta\notag\\
\leq& C[R\ln RE'(R)]^\theta.\notag
\end{align}
By the H\"{o}lder inequality, the Sobolev-Poincar\'{e} inequality and \eqref{ine4.1}, we obtain
\begin{align}\label{ine4.49}
&CR^{\frac{1}{2}-\frac{3}{2p}-\frac{3}{q}}\|u\|_{L^{p}(A_R)}\|H\|_{L^{2p'}(A_R)}\|B\|_{L^q(A_R)}\notag\\
\leq&CR^{\frac{3}{2}-\frac{3}{p}-\frac{3}{q}}\|u\|_{L^{p}(A_R)}\|H\|_{L^6(A_R)}\|B\|_{L^q(A_R)}\notag\\
\leq&CR^{\frac{3}{2}-\frac{3}{p}-\frac{3}{q}}\|u\|_{L^{p}(A_R)}\|\nabla B\|_{L^2(A_R)}\|B\|_{L^q(A_R)}\\
\leq&C(\ln R)^{\lambda+\mu}[RE'(R)]^\frac{1}{2}\notag\\
\leq&C[R\ln RE'(R)]^\frac{1}{2}.\notag
\end{align}
Combining \eqref{ine4.17}, \eqref{ine4.19}, \eqref{ine4.47}, \eqref{ine4.48} and \eqref{ine4.49}, we deduce that
$$
E(R)\leq C[R\ln RE'(R)]^\theta+C[R\ln RE'(R)]^\frac{9-3p}{6-p}+C[R\ln RE'(R)]^\frac{1}{2}+CR^{-1}(\ln R)^{3\lambda}.
$$
Routinely, we can derive that
\begin{align*}
E(R)\leq C[R\ln R E'(R)]^\theta\text{ for any large } R.
\end{align*}
Consequently, we get $u=B=0$.
\end{proof}

\subsection*{Acknowledgements.}
This work was supported by Science Foundation for the Excellent Youth Scholars of Higher Education of Anhui Province (Grant No. 2023AH030073), and Domestic Study and Research Support Program for Young Key Teachers of Higher Education of Anhui Province (Grant No. JNFX2025027).

%\subsection*{Conflict of Interest Statement}
%The author states that there is no conflict of interest.

\subsection*{Data Availability Statement}
No data was used for the research described in the article.

%Data sharing is not applicable to this article as no new data were created or analyzed in this study.

\vspace {0.1cm}

\begin {thebibliography}{DUMA}

\bibitem{BGWX25} J. Bang, C. Gui, Y. Wang, C. Xie, Liouville-type theorems for steady solutions to the Navier-Stokes system in a slab, J. Fluid Mech. 1005 (2025), Paper No. A6, 35 pp.

\bibitem{CPZ20} B. Carrillo, X. Pan, Q.S. Zhang, Decay and vanishing of some axially symmetric D-solutions of the Navier-Stokes equations, J. Funct. Anal. 279(1) (2020), 108504, 49 pp.

\bibitem{CPZZ20} B. Carrillo, X. Pan, Q.S. Zhang, N. Zhao, Decay and vanishing of some D-solutions of the Navier-Stokes equations, Arch. Ration. Mech. Anal. 237(3) (2020) 1383-1419.

\bibitem{Chae14} D. Chae, Liouville-type theorems for the forced Euler equations and the Navier-Stokes equations, Comm. Math. Phys. 326(1) (2014) 37-48.

\bibitem{CDL14} D. Chae, P. Degond, J.G. Liu, Well-posedness for Hall-magnetohydrodynamics, Ann. Inst. H. Poincar\'{e} C Anal. Non Lin\'{e}aire 31(3) (2014) 555-565.

\bibitem{CKW22} D. Chae, J. Kim, J. Wolf, On Liouville-type theorems for the stationary MHD and the Hall-MHD systems in $\mathbb{R}^3$, Z. Angew. Math. Phys. 73(2) (2022),  Paper No. 66, 15 pp.

\bibitem{CL24} D. Chae, J. Lee, On Liouville type results for the stationary MHD in $\mathbb{R}^3$, Nonlinearity 37(9) (2024), Paper No. 095006, 15 pp.

\bibitem{CWeng16} D. Chae, S. Weng, Liouville type theorems for the steady axially symmetric Navier-Stokes and magnetohydrodynamic equations, Discrete Contin. Dyn. Syst. 36(10) (2016) 5267-5285.

\bibitem{CW16} D. Chae, J. Wolf, On Liouville type theorems for the steady Navier-Stokes equations in $\mathbb{R}^{3}$, J. Differ. Equ.  261 (2016) 5541-5560.

\bibitem{CW19} D. Chae, J. Wolf, On Liouville type theorem for the stationary Navier-Stokes equations, Calc. Var. Partial Differential Equations 58(3) (2019), Paper No. 111, 11 pp.

\bibitem{CW21} D. Chae, J. Wolf, On Liouville type theorems for the stationary MHD and Hall-MHD systems, J. Differ. Equ.  295 (2021) 233-248.

\bibitem{CJL21} D. Chamorro, O. Jarr\'{\i}n, P.-G. Lemari\'{e}-Rieusset, Some Liouville theorems for stationary Navier-Stokes equations in Lebesgue and Morrey spaces, Ann. Inst. H. Poincar\'{e} C Anal. Non Lin\'{e}aire, 38(3) (2021) 689-710.

\bibitem{CLW22} X. Chen, S. Li, W. Wang, Remarks on Liouville-type theorems for the steady MHD and Hall-MHD equations, J. Nonlinear Sci. 32(1) (2022), Paper No. 12, 20 pp.

\bibitem{CNY24} Y. Cho, J. Neustupa, M. Yang, New Liouville type theorems for the stationary Navier-Stokes, MHD, and Hall-MHD equations, Nonlinearity 37(3) (2024), Paper No. 035007, 22 pp.

\bibitem{CY25} Y. Cho, M. Yang, Logarithmic improvement of a Liouville-type theorem for the stationary Navier-Stokes equations, preprint, arXiv:2501.04372.

\bibitem{FW21} H. Fan, M. Wang, The Liouville type theorem for the stationary magnetohydrodynamic equations, J. Math. Phys. 62(3) (2021), Paper No. 031503, 12 pp.

\bibitem{Galdi} G.P. Galdi, An introduction to the Mathematical Theory of the Navier-Stokes Equations: Steady-State Problems, 2nd edn., Springer Monographs in Mathematics, Springer, New York, 2011.

\bibitem{Giaquinta} M. Giaquinta, Multiple Integrals in the Calculus of Variations and Nonlinear Elliptic Systems, Princeton University Press, Princeton, New Jersey, 1983.

\bibitem{GW78} D. Gilbarg, H.F. Weinberger, Asymptotic properties of steady plane solutions of the Navier-Stokes equations with bounded Dirichlet integral, Ann. Scuola Norm. Sup. Pisa Cl. Sci. (4) 5 (1978), no. 2, 381-404.

\bibitem{HX05} C. He, Z. Xin, On the regularity of weak solutions to the magnetohydrodynamic equations, J. Differ. Equ. 213(2) (2005) 235-254.

\bibitem{KNSS09} G. Koch, N. Nadirashvili,  G. Seregin, V. $\check{S}$ver\'{a}k, Liouville theorems for the Navier-Stokes equations and applications, Acta Mathematica 203(1) (2009) 83-105.

\bibitem{KTW17} H. Kozono, Y. Terasawa, Y. Wakasugi,  A remark on Liouville-type theorems for the stationary Navier-Stokes equations in three space dimensions,  J. Funct. Anal. 272(2) (2017) 804-818.

\bibitem{KTW22} H. Kozono, Y. Terasawa, Y. Wakasugi, Asymptotic properties of steady solutions to the 3D axisymmetric Navier-Stokes equations with no swirl, J. Funct. Anal. 282(2) (2022), Paper No. 109289, 21 pp.

\bibitem{KTW24} H. Kozono, Y. Terasawa, Y. Wakasugi, Liouville-type theorems for the Taylor-Couette-Poiseuille flow of the stationary Navier-Stokes equations, J. Fluid Mech. 989 (2024), Paper No. A7, 20 pp.

\bibitem{Seregin16} G. Seregin, Liouville type theorem for stationary Navier-Stokes equations, Nonlinearity 29(8) (2016) 2191-2195.

\bibitem{Seregin18} G. Seregin, Remarks on Liouville type theorems for steady-state Navier-Stokes equations, Algebra i Analiz   30(2) (2018) 238-248;  reprinted in  St. Petersburg Math. J.  30(2)  (2019) 321-328.

\bibitem{SW19} G. Seregin, W. Wang, Sufficient conditions on Liouville type theorems for the 3D steady Navier-Stokes equations, Algebra i Analiz 31(2) (2019) 269-278; reprinted in St. Petersburg Math. J. 31(2) (2020) 387-393.

\bibitem{Tsai21} T.P. Tsai, Liouville type theorems for stationary Navier-Stokes equations, Partial Differ. Equ. Appl. 2(1) (2021), Paper No. 10, 20 pp.

\bibitem{WZ13} W. Wang, Z. Zhang, On the interior regularity criteria for suitable weak solutions of the magnetohydrodynamics equations, SIAM J. Math. Anal. 45(5) (2013) 2666-2677.

\bibitem{YX20} B. Yuan, Y. Xiao, Liouville-type theorems for the 3D stationary Navier-Stokes, MHD and Hall-MHD equations, J. Math. Anal. Appl. 491(2) (2020), 124343, 10 pp.

\end{thebibliography}

\end {document}